\newtheorem{theorem}{Theorem}[section]
\newtheorem{lemma}[theorem]{Lemma}
\newtheorem{proposition}[theorem]{Proposition}
\newtheorem{corollary}[theorem]{Corollary}
\newtheorem{remark}[theorem]{Remark}
\newtheorem*{theorem*}{Theorem}
\newcommand{\bb}{{\overline b}}
\newcommand{\Aut}{\mbox{\rm Aut}}
\newcommand{\inv}{\text{inv}}
\newcommand{\Z}{{\mathbb Z}}
\newcommand{\B}{\mathcal{B}}
\newcommand{\matriz}[1]{\begin{array} #1 \end{array}}
\newcommand{\GEN}[1]{\left\langle #1 \right\rangle}
\newcommand{\U}{\mathcal{U}}
\newcommand{\e}[1]{\left\lfloor #1 \right\rfloor}
\newcommand{\ord}{\text{o}}
\newcommand{\lex}{\text{lex}}
\newcommand{\Ese}[2]{\mathcal{S}\left(#1\mid #2\right)}
\newcommand{\Te}[2]{\mathcal{T}\left(#1\mid #2\right)}
\newcommand{\qand}{\quad \text{and} \quad}
\title[A classification of the finite two-generated $p$-groups with cyclic derived subgroup]{A classification of the finite two-generated cyclic-by-abelian groups of prime-power order}
\author{Osnel Broche, Diego Garc\'{\i}a-Lucas and \'{A}ngel del R\'{\i}o}
\thanks{The first author has been partially supported by Fundación Séneca of Murcia under a Jiménez de la Espada grant 20598/IV/18.
The third author has been partially supported by Grant 19880/GERM/15 funded by Fundaci\'{o}n S\'{e}neca of Murcia. The second and third authors have been partially supported by Grant PID2020-113206GB-I00 funded by MCIN/AEI/10.13039/501100011033.}
\address{O. Broche: Departamento de Matemática e Matemática Aplicada, Universidade Federal de Lavras, Caixa Postal 3037, 37200-000, Lavras, Brazil. \rm{osnel@ufla.br}}
\address{D. García-Lucas, \'{A}. del R\'{i}o: Departamento de Matem\'{a}ticas, Universidad de Murcia, 30100, Murcia, Spain. \rm{diego.garcial@um.es, adelrio@um.es}}
\keywords{Finite $p$-groups}
\subjclass{20D15}
\begin{document}
\maketitle

\begin{abstract}
We obtain a classification of the finite two-generated cyclic-by-abelian groups of prime-power order. We associate to each such group $G$ a list
$\inv(G)$ of numerical group invariants which determines the isomorphism type of $G$. Then we describe the set formed by all the possible values of $\inv(G)$.
This allows us to develop practical algorithms to construct all finite two-generated cyclic-by-abelian groups of a given prime-power order, to compute the invariants of such a group,  and to decide whether two such groups are isomorphic.
\end{abstract}

\section{Introduction}

Classifying groups up to isomorphism is a fundamental problem in Group Theory which was already identified in the seminal work of Cayley on finite groups \cite{Cayley1878} where he wrote: ``The general problem is to find all the groups of a given order''.
Unfortunately, an  answer to this question is far from being attainable unless one restricts to particularly well behaved groups such as, for example, abelian finitely generated groups, or finite metacyclic groups (see e.g. \cite{Hempel2000,GarciadelRio2023} for the latter case).
The special case of groups of prime-power order is particularly difficult as its was observed by P. Hall in \cite[page 131]{Hall1940} where he wrote: ``To put it crudely, there is no apparent limit to the complication of a prime-power group. [...] And its seems unlikely that it will be possible to compass the overwhelming variety of prime-power groups within the bounds of a single finite system of formulae''.  This is illustrated, for example, by the 33 pages that Blackburn required to classify the finite $p$-groups with derived subgroup of order $p$ \cite{Blackburn1999}.
A different approach aims at a classification of the $p$-groups of a given order. This is completed up to $p^7$, for $p$ odd, and up to order $2^9$ \cite{OBrienVaughan2005,EickOBrien}.

Besides the basic interest in classifying, up to isomorphism, the groups of a particular type, it is of fundamental importance to deal with other questions. Our initial motivation was trying to solve the Modular Isomorphism Problem for cyclic-by-abelian two-generated $p$-groups and yet the preparation of this paper paved the way to find a negative solution for this problem \cite{GarciaMargolisdelRio} for $p=2$ and has been essential in obtaining some positive results for $p>2$ \cite{GarciadelRioStanojkowski}.

A classification of the  non-abelian  cyclic-by-abelian 2-generated $p$-groups is only available in the literature if $p$ is odd \cite{Miech1975, Song2013} or if the groups are assumed to be of class 2 \cite{AMM}.
The aim of this paper is to fill this gap. 
More precisely we give a complete classification of such groups up to isomorphism, by associating to such a group $G$ a tuple  of integers 
$$\inv(G)=(p,m,n_1,n_2,\sigma_1,\sigma_2,o_1,o_2,o'_1,o'_2,u_1,u_2)$$ 
such that if $H$ is another such group then $G\cong H$ if and only if $\inv(G)=\inv(H)$, and describe the possible values of $\inv(G)$. 
As the classification is known for $p$ odd, the reader may wonder why we do not restrict
our treatment to the case $p=2$. There is no reduction of complexity by  considering only the case $p=2$, and hence for completeness we prefer to present the results in the general case. We followed the approach of Miech because it adapts better to the application we had in mind, namely the Modular Isomorphism Problem. Along the way we fix mistakes in Miech's classification (see \Cref{ErrorMiech}).
While the Miech and Song classifications split in various different presentations depending on parameters with no obvious group theoretical interpretation, we present a unified presentation for the group $G$ in terms of the entries of $\inv(G)$ (see \eqref{Presentacion}, \eqref{def-erres} and \eqref{def-tes}) and the group theoretical   role of each entry of $\inv(G)$ is clear from the definition. This provides an algorithmic procedure to compute all the groups under consideration and  to implement their construction in {\sf GAP} \cite{GAP4}. It also allows us to compute the invariants associated to a given group and hence to decide if two such groups are isomorphic.
We have implemented this and, with the help of the  {\sf GAP} package ANUPQ \cite{ANUPQ}, we have verified that our results agree with the output of the $p$-group generation algorithm \cite{OBrien1990} up to orders  $2^{12}, 3^{11}, 5^{10}, 7^9, 11^8, 13^7$ and $23^8$.

\medskip
To present our main result we need to fix some notation, and at the same time we outline the strategy we will follow. 
Let $G$ be a 2-generated non-abelian cyclic-by-abelian finite group of prime-power order. 
By the Burnside Basis Theorem \cite[5.3.2]{Robinson1982}, $G/G'$ is  $2$-generated and non-cyclic, and the first four invariants $p$, $m$, $n_1$ and $n_2$ of $G$ are given by 
$$|G'|=p^m \qand G/G'\cong C_{p^{n_1}}\times C_{p^{n_2}}, \quad \text{ with } n_1\ge n_2.$$
A \emph{basis} of $G$ is an ordered pair $b=(b_1,b_2)$ of elements of $G$ satisfying
	$$G/G'=\GEN{b_1G'} \times \GEN{b_2G'} \qand |b_iG'|=p^{n_i}\; (i=1,2).$$
Let $\B$ denote the set of bases of $G$.
Each basis determines a list of eight integers, and our strategy consists in selecting bases so that the associated lists satisfy an extreme condition with respect to a well order. This provides the additional eight entries of the list $\inv(G)$.
To define the integers associated to a basis we first define two maps $\sigma:G\rightarrow \{1,-1\}$ and $o:G\rightarrow \{0,1,\dots,m-1\}$ by setting:
	\begin{eqnarray*}
	\sigma(g)&=& \begin{cases} -1, & \text{if } a^g=a^{-1}\ne a \text{ for some } a\in G'; \\
	1, & \text{otherwise}.\end{cases} \\
	o(g) &=& \begin{cases} 0, & \text{if } a^g=a^{-1} \text{ for every } a\in G'; \\
	\log_p |gC_G(G')|, & \text{otherwise}.\end{cases}
	\end{eqnarray*}
So each basis $(b_1,b_2)$ of $G$ yields four integers $\sigma(b_i)$ and $o(b_i)$, $i=1,2$ and we use this to define the next four entries of $\inv(G)$ by setting 
$$\sigma o=(\sigma_1,\sigma_2,o_1,o_2) = 
\min_{\lex} \{(\sigma(b_1),\sigma(b_2),o(b_1),o(b_2)): (b_1,b_2)\in \B\}$$ 
where $  \displaystyle\min_{\lex}$ denotes  the minimum with respect to the lexicographical order. 
Let $r_1$ and $r_2$ be the unique integers $1<r_i\leq 1+p^m$ satisfying
\begin{equation} \label{def-erres}
r_1 \equiv  \sigma_1(1 +p^{m-o_1} ) \bmod p^m \qand
\begin{cases} 
r_2 \equiv  \sigma_2(1  + p^{m-o_2}) \bmod p^m,& \text{if } o_1o_2=0  ; \\
r_2 \equiv \sigma_2(1+p^{m-o_1})^{p^{o_1-o_2}} \bmod p^m  , & \text{otherwise.}
\end{cases} 
\end{equation}
Observe that the classes modulo $p^m$ represented by $r_i$ and $\sigma_i +p^{m-o_i}$ generate the same subgroup in the group of units of $\Z/p^m\Z$.

Let
$$\B_r=\{(b_1,b_2)\in \B :a^{b_i}=a^{r_i} \text{ for every } i=1,2 \text{ and } a\in G'\}.$$
In Theorem~\ref{Fijando-r}, we prove that $\B_r$ is not empty
From now on we only use bases in $\B_r$ and for each $b=(b_1,b_2)\in \B_r$ we denote by $t_1(b)$ and $t_2(b)$ the unique integers satisfying
\begin{equation}\label{Def-TesFunciones}
1\le t_i(b)\le p^m \qand b_i^{p^{n_i}}=[b_2,b_1]^{t_i(b)} \quad (i=1,2).
\end{equation}
Define $o'(b)=(o'_1(b),o'_2(b))$ and $u(b)=(u_2(b),u_1(b))$ by setting
	$$o'_i(b)=\log_p(|b_i|)-n_i \qand t_i(b)=u_i(b)p^{m-o'_i(b)}.$$
Observe that $|b_i|=p^{n_i+o'_i(b)}$ and hence $0\le o'_i(b)\le m$ and $p\nmid u_i(b)$.
We use this to define the next two entries of $\inv(G)$ by setting 
	$$(o'_1,o'_2)=\max_{\lex} \{ o'(b) :b\in \B_r \}.$$ 
Then we define
	$$\B_r' = \{b\in \B_r : o'(b)=(o'_1,o'_2)\}.$$
The two remaining entries of $\inv(G)$ are given by  
	$$(u_2,u_1)=\min_{\lex}  \{ u(b) : b\in \B_r'\}.$$
The ``unnatural'' order on the $u$'s is not a typo but a convenient technicality.
Observe that we are abusing notation since $o'_i,u_i$ and $t_i$ sometimes denote functions and sometimes integers related to those functions. This does not cause confusion because in the former case the functions always appear with arguments.

Set
\begin{equation}\label{def-tes}
t_i=u_ip^{m-o'_i} \quad (i=1,2).
\end{equation}
Now $G$ is isomorphic to $\mathcal G_I$, where $I$ is an abbreviation for $(p,m,n_1,n_2,\sigma_1,\sigma_2, o_1,o_2,o'_1,o'_2,u_1,u_2)$ and
\begin{equation}\label{Presentacion}
\mathcal{G}_I = \GEN{b_1,b_2 \mid [b_2,b_1]^{p^m}=1, \quad [b_2,b_1]^{b_i} = [b_2,b_1]^{r_i},\quad  b_i^{p^{n_i}} = [b_2,b_1]^{t_i}, \quad (i=1,2)},
\end{equation}
where $r_i$ and $t_i$ are as defined in \eqref{def-erres} and \eqref{def-tes}.
 
Hence, $G$ is completely determined up to isomorphism by $\inv(G)$. Therefore, to obtain our classification it only
remains to give the list of tuples occurring as $\inv(G)$.

\begin{theorem*}\label{Main}
The maps $[G]\mapsto \inv(G)$ and $I\mapsto [\mathcal{G}_I]$ define mutually inverse bijections between the isomorphism classes of $2$-generated non-abelian groups of prime-power order and the set of lists of integers $(p,m,n_1,n_2,\sigma_1,\sigma_2, o_1,o_2,o'_1,o'_2,u_1,u_2)$ satisfying the following conditions. 	
\begin{enumerate}
	\item \label{1} $p$ is prime and $n_1\geq n_2 \ge 1 $.
	\item \label{2}$\sigma_i=\pm 1$, $0\le o_i<\min(m,n_i)$  and $p\nmid u_i$ for $i=1,2$.   
	\item \label{3}If $p=2$ and $m\ge 2$ then $o_i<m-1$ for $i=1,2$.
	\item \label{4}$0\le o'_i \le m-o_i$ for $i=1,2$ and $o'_1\le m-o_2$.
 	\item \label{5} One of the following conditions holds:  
		\begin{enumerate}
		\item $o_1=0$.
		\item $0<o_1=o_2$ and $\sigma_2=-1$.
		\item $o_2=0<o_1$ and $n_2<n_1$.
		\item $0<o_2< o_1<o_2+n_1-n_2$.
		\end{enumerate}	
	
	\item \label{6}  Suppose that  $\sigma_1=1$. Then the following conditions hold: 
		\begin{enumerate}
			\item \label{6a} $\sigma_2=1$ and $ o_2+o_1'\leq m\le n_1$. 
			\item \label{6b} Either $o_1+o'_2\le m \le n_2$ or $2m-o_1-o'_2=n_2<m$ and $u_2\equiv 1 \bmod p^{m-n_2}$.
			\item \label{6c} If $o_1=0$ then one of the following conditions holds:
			\begin{enumerate}
				\item $o'_1\le o'_2\le o'_1+o_2+n_1-n_2$ and 
				$\max(p-2,o'_2 ,n_1-m)>0$. 
				\item $p=2$, $m=n_1$, $o'_2=0$ and $o_1'=1$. 
			\end{enumerate}
			\item \label{6d}If $o_2=0<o_1$ then $o'_1+\min(0,n_1-n_ 2  -o_1)\le o'_2\le o'_1+n_1-n_2$ and 
			$\max(p-2,o'_1 ,n_1-m)>0$.
			\item \label{6e}If $0<o_2<o_1$ then $o'_1\le o'_2\le o'_1+n_1-n_2$. 
			\item \label{6f}$ 1\le u_1  \leq p^{a_1}$, where 
			$$a_1=\min(o'_1,o_2,o_2+n_1-n_2+o'_1-o'_2).$$
			\item \label{6g}One of the following conditions holds:
			\begin{enumerate}
				\item $ 1\le  u_2 \leq p^{a_2}$.
				\item $o_1o_2\neq 0$, $n_1-n_2+o_1'-o_2'=0<a_1$,  $1+p^{a_2}\leq u_2 \leq 2p^{a_2}$, and $u_1 \equiv 1\bmod p$;
			\end{enumerate}
			where  
	$$a_2=  \begin{cases}
	0, &\text{if } o_1=0; \\
	\min(o_1,o'_2,o'_2-o'_1+\max(0,o_1+n_2-n_1)), & \text{if } o_2=0<o_1; \\ \min(o_1-o_2,o'_2-o'_1), & \text{otherwise.} \end{cases}$$
		\end{enumerate}	
	\item \label{7}  Suppose that  $\sigma_1=-1$. Then the following conditions hold: 
		\begin{enumerate}
			\item \label{7a} $p=2$, $m\ge 2$, $o'_1\le 1$ and $u_1=1$. 
			\item\label{T-+}\label{7b} If $\sigma_2=1$ then $n_2<n_1$ and the following conditions hold:
				\begin{enumerate}
					\item\label{T-+m>=}\label{7bi} If $m\leq n_2$ then $o'_2\leq 1$, $u_2=1$ and either  $o'_1 \le o'_2 $ or $o_2=0<n_1-n_2<o_1$
					
					\item\label{T-+m<}\label{7bii} If $m>n_2$ then $m+1=n_2+o'_2$, $u_2( 1+2^{m-o_1-1})\equiv -1 \bmod 2^{m-n_2}$, $1\le u_2 \le 2^{m-n_2+1}$, either $o'_1  =1$ or $   o_1+1\neq n_1$, and one of the following conditions holds:
					\begin{itemize}
						\item $o'_1=0$ and either $o_1=0$ or $o_2+1\ne n_2$.  
						\item $o_1'=1$, $o_2=0 $ and $ n_1-n_2<o_1$. 
						\item $  u_2\leq  2^{m-n_2}$.	
					\end{itemize}
				\end{enumerate} 
			
			\item\label{T--}\label{7c} If $\sigma_2=-1$ then $o'_2\le 1$, $u_2=1$ and the following conditions hold:
				\begin{enumerate}
					\item If  $o_1 \le o_2$  and $n_1>n_2$ then $o_1'  \leq o_2' $.
					\item If  $o_1=o_2$ and $n_1=n_2$ then $o_1' \geq o_2' $ 
					\item If $o_2=0<o_1=n_1-1$ and $n_2=1$ then  $o'_1 =1$ or $o'_2 =1$.  
					\item If $o_2=0<o_1$ and $n_1\ne o_1+1$ or $n_2\ne 1$ then $o'_1+\min(0,n_1-n_2-o_1)\le o'_2 $. 
					\item If $o_1 o_2 \ne 0$ and $o_1\ne o_2$ then $o'_1 \le o'_2  $. 
				\end{enumerate} 
		\end{enumerate}			
					
\end{enumerate}
\end{theorem*}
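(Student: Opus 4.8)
The plan is to reduce the theorem to two assertions, using the fact — established before the statement — that every such $G$ satisfies $G\cong \mathcal{G}_{\inv(G)}$. This already shows that $[G]\mapsto \inv(G)$ is well defined on isomorphism classes and injective: if $\inv(G)=\inv(H)$ then $G\cong \mathcal{G}_{\inv(G)}=\mathcal{G}_{\inv(H)}\cong H$. What remains is \emph{Necessity}, that every tuple $\inv(G)$ satisfies conditions (1)--(7), and \emph{Sufficiency}, that every tuple $I$ satisfying (1)--(7) is realized, in the sense that $\mathcal{G}_I$ is a $2$-generated non-abelian group of prime-power order with cyclic derived subgroup and $\inv(\mathcal{G}_I)=I$. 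Together these say that the image of $\inv$ is exactly the set of admissible tuples and that $I\mapsto [\mathcal{G}_I]$ is a two-sided inverse.

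For Necessity I would work out how the eight ``dynamic'' entries $(\sigma_1,\sigma_2,o_1,o_2,o'_1,o'_2,u_1,u_2)$ transform under a change of basis, i.e. under replacing $(b_1,b_2)$ by another pair in $\B$. The static entries $p,m,n_1,n_2$ and condition (1) come directly from the Burnside Basis Theorem and the convention $n_1\ge n_2$. Conditions (2) and (3) follow by analyzing the conjugation action of $G$ on $G'\cong C_{p^m}$ as a homomorphism into $(\Z/p^m)^\times$: for odd $p$ this unit group is cyclic, while for $p=2$ and $m\ge 3$ it is $\GEN{-1}\times\GEN{5}$, and this dichotomy is exactly what forces the separate ``sign'' datum $\sigma_i$ together with the bound $o_i<m-1$ in the $p=2$ case; the bound $o_i<n_i$ reflects that $b_i^{p^{n_i}}\in G'$ acts trivially on the abelian group $G'$. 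Condition (4) is an order computation from $b_i^{p^{n_i}}=[b_2,b_1]^{t_i}$ and $|b_i|=p^{n_i+o'_i}$. Conditions (5), (6), (7) are the substantive ones: they must encode precisely the three successive extremal choices — lexicographic minimality of $\sigma o$, lexicographic maximality of $o'$ on $\B_r$, and lexicographic minimality of $u$ on $\B_r'$. The strategy is to parametrize admissible basis changes, compute their effect on each invariant using commutator identities and the $p$-adic arithmetic of units of the form $1+p^k$, and show that ``no admissible change improves the relevant invariant'' translates into the stated numerical inequalities and congruences.

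For Sufficiency I would, given $I$ satisfying (1)--(7), first verify that the presentation $\mathcal{G}_I$ defines a group of the expected order $p^{m+n_1+n_2}$ with derived subgroup $\GEN{[b_2,b_1]}\cong C_{p^m}$ and $\mathcal{G}_I/\mathcal{G}_I'\cong C_{p^{n_1}}\times C_{p^{n_2}}$; conditions (1)--(4) are what make the relations consistent, so that $r_i$ is well defined modulo $p^m$, the exponents $t_i$ make sense, and the orders match. The cleanest route is a collection/rewriting argument giving an upper bound $|\mathcal{G}_I|\le p^{m+n_1+n_2}$, matched by an explicit model attaining it. Having $\mathcal{G}_I$ in hand, and using the non-emptiness of $\B_r$ from Theorem~\ref{Fijando-r}, I would recompute $\inv(\mathcal{G}_I)$ with $(b_1,b_2)$ as candidate distinguished basis and show that conditions (5)--(7) guarantee this basis is extremal at each stage, so that the recovered tuple is $I$ itself.

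The main obstacle is the case analysis underlying conditions (5)--(7). There are many regimes according to the signs $\sigma_i$, whether each $o_i$ vanishes, the comparisons between $o_1,o_2$ and between $n_1,n_2$, and the special behavior at $p=2$; in each regime one must enumerate the relevant basis changes, track their effect on $(\sigma,o,o',u)$ through the unit arithmetic, and match extremality with the precise inequalities. I expect the $p=2$ analysis to be by far the hardest, since the non-cyclic unit group produces the anomalous sub-cases visible in the statement — the exceptional clause in (6g)(ii), the congruence conditions modulo $2^{m-n_2}$ in (7b), and the interaction of $\sigma_i=-1$ with small values of $o'_i$ — and these are exactly the places where a naive argument would either overlook realizable tuples or admit spurious ones.
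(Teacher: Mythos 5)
Your outline follows the paper's proof architecture exactly. Injectivity and well-definedness of $[G]\mapsto\inv(G)$ come for free from $G\cong\mathcal{G}_{\inv(G)}$, reducing everything to identifying the image of $\inv$; your necessity plan (parametrize basis changes, track $(\sigma,o,o',u)$ through the arithmetic of the units of $\Z/p^m\Z$, and translate the three successive extremal choices into the numerical conditions) is precisely the content of the paper's Sections 2--5, where membership in $\B'$, $\B'_r$ and $\B_{rt}$ is characterized intrinsically in terms of a given basis; and your sufficiency plan (construct $\mathcal{G}_I$, then use conditions (5)--(7) to show the distinguished basis is extremal at all three stages, so that $\inv(\mathcal{G}_I)=I$) is how Section 6 concludes. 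The only genuine methodological difference is the existence step: you propose a collection/rewriting bound $|\mathcal{G}_I|\le p^{m+n_1+n_2}$ matched by an explicit model, while the paper constructs the model directly as an extension of $C_{p^{n_1}}\times C_{p^{n_2}}$ by $C_{p^m}$ via an explicitly written $2$-cocycle (Lemma 6.1), checking the cocycle identity by hand with the $\Ese{-}{-}$ calculus. Both routes are viable; the cocycle route avoids collection arguments altogether and pinpoints exactly which numerical hypotheses the construction consumes.

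It is at that point that your outline contains a concrete error: the claim that ``conditions (1)--(4) are what make the relations consistent, so that \dots the orders match'' is false. Conjugating $b_1^{p^{n_1}}=[b_2,b_1]^{t_1}$ by $b_2$, and $b_2^{p^{n_2}}=[b_2,b_1]^{t_2}$ by $b_1$, forces $\Ese{r_1}{p^{n_1}}\equiv t_1(1-r_2)$ and $\Ese{r_2}{p^{n_2}}\equiv t_2(r_1-1)$ modulo the order of $[b_2,b_1]$; if these congruences fail modulo $p^m$, the element $[b_2,b_1]$ has order strictly smaller than $p^m$ in $\mathcal{G}_I$ and the construction collapses. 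These congruences do not follow from (1)--(4): for odd $p$ the tuple $(p,3,1,1,1,1,0,0,3,3,1,1)$ satisfies (1)--(4), yet it gives $r_i\equiv 1\bmod p^3$ and $t_i=1$, and then the relations force $[b_2,b_1]^p=1$, so $|\mathcal{G}_I|=p^3$ rather than $p^5$. What excludes this tuple is condition (6a) (here $m\le n_1$ fails), and in general the consistency congruences consume (6a), (6b) and (7) together with (2) and (4) --- which is exactly why the paper verifies the hypotheses of its cocycle lemma from those conditions before building the extension. A smaller instance of the same issue occurs in your necessity sketch: the observation that $b_i^{p^{n_i}}\in G'$ centralizes $G'$ gives only $o_i\le n_i$, whereas the strict inequality $o_i<n_i$ in condition (2) genuinely requires the power relations (the paper's Lemma 2.4(2)). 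With these repairs --- in particular, spending (6a), (6b) and (7) already at the existence stage rather than only at the extremality stage --- your plan coincides with the paper's proof.
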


We explain now the structure of the paper.  
In \Cref{SectionBr} we prove that $\B_r\ne\emptyset$ and obtain some constraints for $\sigma o$  and $o'(b)$ for $b\in \B_r$. 
In \Cref{SectionChangingBases} we obtain formulae  that control $o'(b)$ and $u(b)$ for different bases in $\B_r$. This is used in \Cref{SectionBr'} to describe, only in terms of $o'(b)$, when a basis $b$ in $\B_r$ belongs to $\B'_r$. Then, in \Cref{SectionBrt}, we describe, only in terms of $u(b)$, when a basis $b$ in $\B'_r$ belongs to the set
	$$\B_{rt}=   \{b\in \B'_r : u(b)=(u_2,u_1)\}.$$ 
Combining these results we prove the theorem in \Cref{SeccionDemostracion}. 
%In \Cref{SeccionSong}, we compare Song classification for the case with $p$ odd, with our classification.
 In \Cref{SectionImplementation} we present an implementation of our results and some experiments supporting the correctness of the main result.
In the appendix we collect technical number theoretical results used frequently in the proofs in \Cref{SectionBr} and \Cref{SectionChangingBases}.

\section{Fixing the $r_i$'s and constraints on the invariants}\label{SectionBr}

In this section we obtain some restrictions on the invariants of our target groups and we prove the existence of a basis $(b_1,b_2)$ such that $[b_2,b_1]^{b_i}=[b_2,b_1]^{r_i}$, where $r_i$ is defined by \eqref{def-erres}.

We start with some notation.
If $p$ is a prime integer and $n$ is a non-zero integer then $v_p(n)$ denotes the  maximum positive integer  $m$ with $p^m\mid n$. We set $v_p(0)=\infty$. 
If $m$ is a positive integer coprime to $n$ then $\ord_m(n)$ denotes the multiplicative order of $n$ modulo $m$, i.e. the minimum positive integer $k$ such that $n^k\equiv 1 \bmod n$.
We use $\le_{\lex}$ to denote the lexicographic order on lists of integers of the same length and $\min_{\lex}$ and $\max_{\lex}$ denote the minimum and maximum with respect to $\le_{\lex}$, respectively.

We use standard group theoretical notation. For example, the cyclic group  of order $n$ is denoted $C_n$ and if $G$ is a group then $G'$ denotes its derived subgroup. For  $g,h\in G$ we denote:
$$g^h = h^{-1}gh, \quad [g,h]=g^{-1}h^{-1}gh, \quad |g|=\text{ order of } g.$$
If $G'$ is cyclic and $g\in G$ then $r(g)$ denotes an integer, unique modulo $|G'|$, such that $a^g=a^{r(g)}$ for every $a\in G'$.

Given integers $s,t$ and $n$ with $n  \ge   0$ we set
$$\Ese{s}{n} = \sum_{i=0}^{n-1} s^i \qand \Te{s,t}{n} = \sum_{0\le  	i < j < n} s^i t^j.$$
This notation is motivated by the following lemma whose proof is straightforward. More properties of   these operators are  included  in \Cref{Apendice}.

\begin{lemma}\label{CyclicByAbelian}
If $G$ is a cyclic-by-abelian group then the following equalities hold:
%	For every $g\in G$ fix an integer $s_g$ satisfying $a^g = a^{s_g}$ for every $a\in G'$.
%	Then
	\begin{align}
	\label{CBAConmutadores} [x_1\cdots x_n,y_1\cdots y_m] &= 
	\prod_{i=1}^n \prod_{j=1}^m [x_i,y_j]^{x_{i+1}\cdots x_n y_{j+1}\cdots y_m} \quad & (x_1,\dots,x_n, y_1,\dots,y_m\in G),\\
	\label{CBAPotenciaga} (ga)^n &= g^n a^{\Ese{r(g)}{n}} \quad & (g\in G, a\in G'), \\
	\label{CBAPotenciagh} (gh)^n &= g^n h^n [h,g]^{\Te{r(g),r(h)}{n}} \quad
	& (g,h\in G).
	\end{align}
\end{lemma}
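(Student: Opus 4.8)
The plan is to establish the three displayed formulae by induction on the number of factors, exploiting the single structural fact that makes a cyclic-by-abelian group tractable. Since $G'$ is contained in a cyclic normal subgroup, it is itself abelian and normal, so conjugation by any $x\in G$ restricts to the power map $c\mapsto c^{s_x}$ on $G'$, and any two elements of $G'$ commute. Every commutator $[x_i,y_j]$, and in particular $c=[h,g]$, lies in $G'$; hence throughout the computation I may freely (i) replace $c^{x_1\cdots x_k}$ by $c^{\,s_{x_1}\cdots s_{x_k}}$ and (ii) reorder factors that all lie in $G'$. These two moves are the only nontrivial ingredients.

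For \eqref{CBAConmutadores} I would start from the universal commutator identities $[ab,c]=[a,c]^b[b,c]$ and $[a,bc]=[a,c][a,b]^c$, which hold with the paper's conventions $[g,h]=g^{-1}h^{-1}gh$ and $g^h=h^{-1}gh$. A first induction on $n$ gives $[x_1\cdots x_n,y]=\prod_{i=1}^n[x_i,y]^{x_{i+1}\cdots x_n}$, and a second induction on $m$, applied with left entry $x=x_1\cdots x_n$, gives $[x,y_1\cdots y_m]=\prod_{j=1}^m[x,y_j]^{y_{j+1}\cdots y_m}$. Substituting the first into the second and conjugating each factor $[x_i,y_j]^{x_{i+1}\cdots x_n}$ by $y_{j+1}\cdots y_m$ yields the asserted double product; because all factors lie in the abelian group $G'$, the order in which the two products are taken is irrelevant, which is precisely what lets the nested conjugations collapse to the clean form stated.

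Formula \eqref{CBAPotenciaga} I would prove by a direct induction on $n$, the step resting on the identity $a^{k}g=g\,a^{s_g k}$ for $a\in G'$ (move $g$ across a power of $a$, paying the factor $s_g$): writing $(ga)^{n+1}=g^{n}a^{\Ese{s_g}{n}}\,g\,a$ and pushing the interior $g$ to the left turns the exponent into $s_g\Ese{s_g}{n}+1=\Ese{s_g}{n+1}$, which is the defining recursion of $\Ese{s_g}{n}$. For \eqref{CBAPotenciagh} I would again induct on $n$, expanding on the left: from $(gh)^{n+1}=gh\,(gh)^{n}=g\,h\,g^{n}h^{n}[h,g]^{e_n}$ with $e_n=\Ese{s_g,s_h}{n}$, I first commute $h$ past $g^{n}$ using $h^{g^{n}}=h[h,g]^{\Ese{s_g}{n}}$ (itself the special case $[h,g^n]=[h,g]^{\Ese{s_g}{n}}$ of \eqref{CBAConmutadores}), then carry the resulting power of $[h,g]$ across $h^{n}$ at the cost of a factor $s_h^{n}$. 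The exponent of $[h,g]$ then satisfies $e_{n+1}=s_h^{\,n}\Ese{s_g}{n}+e_n$, which is exactly the recursion obtained by isolating the $j=n$ terms in $\Ese{s_g,s_h}{n+1}=\sum_{0\le i<j\le n}s_g^i s_h^j$, closing the induction.

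The computations are routine, so there is no real conceptual obstacle; the only point requiring care is the bookkeeping in \eqref{CBAPotenciagh}, where one must track both conjugation factors $s_g$ and $s_h$ and check that the resulting recursion for the exponent agrees with the double sum $\Ese{s_g,s_h}{n}$. Choosing to expand on the left rather than the right keeps this recursion in its simplest form and avoids the messier, though equivalent, expression one obtains from right multiplication.
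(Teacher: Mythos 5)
Your proof is correct, and it is essentially the paper's intended argument: the paper states \Cref{CyclicByAbelian} without proof, calling it straightforward, and your inductions via the standard identities $[ab,c]=[a,c]^b[b,c]$, $[a,bc]=[a,c][a,b]^c$ and the recursions $\Ese{s}{n+1}=1+s\Ese{s}{n}$, $\Ese{s,t}{n+1}=\Ese{s,t}{n}+t^n\Ese{s}{n}$ are exactly the expected verification. All the bookkeeping checks out, including the key step $[h,g^n]=[h,g]^{\Ese{s_g}{n}}$ and the use of the abelianness of $G'$ to reorder factors.
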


In the remainder of the paper $G$ is a 2-generated non-abelian cyclic-by-abelian group of prime-power order and
$$\inv(G)=(p,m,n_1,n_2,\sigma_1,\sigma_2,o_1,o_2,o'_1,o'_2,u_1,u_2).$$ 
Observe that 
\begin{equation}\label{sigmag=-1}
\sigma(g)=-1 \quad \text{if and only if} \quad p=2, m\ge 2 \text{ and  
} r(g)\equiv -1 \bmod 4.
\end{equation}
As $r(g)$ is coprime to $p$ and uniquely determined modulo $p^m$ we abuse notation by identifying $r(g)$ with an element of $\U_{p^m}$,  the  group of units of $\Z/p^m\Z$, and use standard group theoretical notation  for the $r(g)$'s.
For example, $|r(g)|=\ord_{p^m}(r(g))$ and $\GEN{r(g_1),r(g_2),\dots,r(g_k)}$ denotes the group generated by the $r(g_i)$'s  in $\U_{p^m}$, for $g_1,\dots,g_k \in G$. 
	Then $g\mapsto r(g)$ defines  a group homomorphism $G\rightarrow \U_{p^m}$ with kernel $C_G(G')$ and image contained in the Sylow $p$-subgroup of 
$\U_{p^m}$. 
	In particular 
	$$|gC_G(G')|=\ord_{p^m}(r(g)),$$
	and hence
	\begin{equation} \label{oComoOrden}
	o(g)=\begin{cases} 0, & \text{if } r(g)\equiv -1 \bmod p^m; \\ \log_p(
	\ord_{p^m}(r(g))), & \text{otherwise}.\end{cases}.
	\end{equation} 
Therefore, $|gC_G(G')|=p^e$ with $0\le e \le m-1$ and if $p=2$ and $m\ge 3$ then $e\le m-2$. Furthermore, if $p=2$, $m=2$ and $e=1$ then $r(g)\equiv -1 \bmod 4$ and hence $o(g)=0$.
	This implies that 
	\begin{equation}\label{Cotao}
	0\le o_i <m \qand \text{if }p=2 \text{ and } m\ge 2 \text{ then } o_i\le m-2.
	\end{equation}
If $p$ is odd or $m\le 2$ then $\U_{p^m}$ is cyclic and its subgroup of order $p^e$ for $e\le m-1$ is $\GEN{1+p^{m-e}}$.
If $m\ge 3$ then $\U_{2^m}=\GEN{5}\times \GEN{-1}$ with $\ord_{2^m}(5)=2^{m-2}$ and $\ord_{2^m}(-1)=2$. 
Thus, in this case, $\U_{2^m}$ has exactly three subgroups of order $2$, namely $\GEN{-1}$, $\GEN{1+2^{m-1}}$ and $\GEN{-1+2^{m-1}}$, and exactly two cyclic subgroups of order $p^{e}$ for $e\in \{2,\dots,m-2\}$, namely $\GEN{1+2^{m-e}}$ and $\GEN{-1+2^{m-e}}$.
Hence $\GEN{r(g)}$ is determined by $o(g)$ and $\sigma(g)$, namely:
\begin{equation}\label{rosigma}
\GEN{r(g)}=\GEN{\sigma(g)(1+p^{m-o(g)})}=\GEN{\sigma(g)+p^{m-o(g)}}.
\end{equation}
Moreover, if $g,h\in G$ and $o(g)\le o(h)$ then there exist an integer $x$ such that $p\nmid x$ and $r(gh^{-xp^{o(h)-o(g)}})\equiv \pm 1\bmod p^m$,  with negative sign occurring exactly when $p=2$, $m\ge 3$ and either $\sigma(g)=-1$ and $o(h)>o(g)$, or $o(h)=o(g)$ and $\sigma(g)\ne \sigma(h)$.
We will use this without specific mention. 

Another fact that we will use without specific mention is the following: if $(b_1,b_2) \in \B$ then
$$\B =  \{(b_1^{x_1}b_2^{y_1}[b_2,b_1]^{z_1},b_1^{x_2}b_2^{y_2}[b_2,b_1]^{z_2}) : x_i,y_i,z_i\in \Z, \ 
p^{n_1-n_2} \mid x_2, \text{ and } x_1y_2\not\equiv x_2y_1\bmod p\}.$$

Our first objective is to characterize the elements $b$ of $\B$ for which $(\sigma(b_1),\sigma(b_2),o(b_1),o(b_2))$ achieves the maximum $\sigma o$, i.e. the elements of the following set:
$$\B' = \{b\in \B : \sigma o=(\sigma(b_1),\sigma(b_2),o(b_1),o(b_2))\}.$$

\begin{lemma}\label{Fijando-rLema}
	Let $b=(b_1,b_2)\in \B$. Then $b\in \B'$ if and only if the following conditions hold:
	\begin{enumerate}
		\item\label{sigma11} If $\sigma(b_1)=1$ then $\sigma(b_2)=1$.
		\item\label{nsIguales} If $n_1=n_2$ then $\sigma(b_1)=\sigma(b_2)$.
		\item\label{OrdenOs} One of the following conditions holds:
		\begin{enumerate}
			\item\label{o10}  $o(b_1)=0$. 
			\item\label{osIguales} $0<o(b_1)=o(b_2)$ and $\sigma(b)=(-1,-1)$.		
			\item\label{o20}  $0=o(b_2)<o(b_1)$ and  $n_2<n_1$. 
			\item\label{osNo0}  $0< o(b_2) < o(b_1) < o(b_2) +n_1-n_2$. In particular, $n_2<n_1$.
		\end{enumerate}   
	\end{enumerate}
\end{lemma}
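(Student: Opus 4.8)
The plan is to reduce everything to the homomorphism $r\colon G\to\U_{p^m}$, $g\mapsto r(g)$, whose kernel is $C_G(G')$ and whose image $H=\GEN{r(b_1),r(b_2)}$ lies in the Sylow $p$-subgroup $P$ of $\U_{p^m}$. Since $r$ is a homomorphism and kills $G'$, a change of basis $c_i=b_1^{x_i}b_2^{y_i}[b_2,b_1]^{z_i}$ acts by $r(c_i)=r(b_1)^{x_i}r(b_2)^{y_i}$, so the pair $(\sigma(b_i),o(b_i))$ of any basis is governed by the way $r(b_1),r(b_2)$ generate $H$; recall from \eqref{rosigma} that $(\sigma(g),o(g))$ determines $\GEN{r(g)}$. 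Two facts drive the argument. First, $\sigma$ is a homomorphism $G\to\{\pm1\}$, namely $g\mapsto r(g)\bmod 4$ under \eqref{sigmag=-1}. Second, the reduction principle quoted just before the lemma: when $o(g)\le o(h)$ one may replace $g$ by $gh^{-xp^{o(h)-o(g)}}$ (with $p\nmid x$) to force $r\equiv\pm1$, the sign being controlled by $p$, $m$, $\sigma(g)$, $\sigma(h)$ and whether $o(g)=o(h)$. I will use this principle as the engine for every reduction, proving necessity of the three conditions first and then sufficiency through a consistency statement.

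For necessity I argue contrapositively: if a basis violates a condition I exhibit an admissible change of basis strictly decreasing $(\sigma(b_1),\sigma(b_2),o(b_1),o(b_2))$ for $\le_{\lex}$. The $\sigma$-conditions are immediate from multiplicativity of $\sigma$. If $\sigma(b_1)=1$ and $\sigma(b_2)=-1$, then $(b_1b_2,b_2)$ is a basis (its coefficient matrix is $\left(\begin{smallmatrix}1&1\\0&1\end{smallmatrix}\right)$, admissible since $p^{n_1-n_2}\mid 0$) and $\sigma(b_1b_2)=-1$ lowers the first coordinate. If $n_1=n_2$ and $\sigma(b_1)\ne\sigma(b_2)$, then the first condition forces $\sigma(b_1)=-1$, and $(b_1,b_1b_2)$ is a basis (matrix $\left(\begin{smallmatrix}1&0\\1&1\end{smallmatrix}\right)$, admissible because $n_1=n_2$) with $\sigma=(-1,-1)$, lowering the second coordinate.

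The heart is the third condition, where I split on $o(b_1)$ versus $o(b_2)$ and invoke the reduction principle. If $o(b_1)\le o(b_2)$ and $o(b_1)>0$, the principle produces a first element of $o$-value $0$; this is a genuine decrease unless the forced sign raises $\sigma$ above the fixed minimum, which by the sign rule happens exactly when $o(b_1)=o(b_2)$ and $\sigma(b_1)=\sigma(b_2)=-1$ — precisely the surviving case $0<o(b_1)=o(b_2)$, $\sigma=(-1,-1)$. Symmetrically, the case $o(b_2)=0<o(b_1)$ with $n_1=n_2$ is reducible by swapping $b_1,b_2$ (legitimate since the orders agree and, after the $\sigma$-conditions, $\sigma(b_1)=\sigma(b_2)$), forcing $n_2<n_1$. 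Finally, the subtle strict bound $o(b_1)<o(b_2)+n_1-n_2$ comes from the divisibility constraint $p^{n_1-n_2}\mid x_2$ on the second generator: when $b_1$ is the dominant generator (so $\GEN{r(b_1)}=H$, $o(b_1)=\log_p|H|$ and $n_1>n_2$), a valuation computation shows that a new second element $b_1^{x_2}b_2^{y_2}$, with $p\nmid y_2$ forced by admissibility, can absorb the top part of $r(b_1)$ and strictly lower $o(b_2)$ precisely when $o(b_1)-o(b_2)\ge n_1-n_2$; hence a minimizer satisfies the strict inequality.

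For sufficiency I would show that, for a fixed $G$, the four alternatives of the third condition are mutually exclusive and each pins down $(\sigma_1,\sigma_2,o_1,o_2)$ in terms of $G$ alone (through $\log_p|H|$, $n_1-n_2$, and $H\bmod 4$): the existence of a first element acting on $G'$ by $\pm1$ is an intrinsic property of $G$, and this is exactly the dichotomy separating the case $o(b_1)=0$ from the dominant cases $o(b_2)=0<o(b_1)$ and $0<o(b_2)<o(b_1)$. Since $\B'\ne\emptyset$ and, by necessity, $\B'$ is contained in the set of bases satisfying the three conditions, while all such bases share the same value of $(\sigma(b_1),\sigma(b_2),o(b_1),o(b_2))$, that common value must be the minimum, so every such basis lies in $\B'$. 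I expect the main obstacle to be the case $p=2$, $m\ge3$, where $P\cong C_{2^{m-2}}\times C_2$ is noncyclic: there $\GEN{r(g)}$ is no longer determined by $o(g)$ alone, the orders and the inversion data interact, and the careful sign bookkeeping in the reduction principle — the very source of the case $\sigma=(-1,-1)$ — is what keeps the $\sigma$-minimization and the $o$-minimization compatible.
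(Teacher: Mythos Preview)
Your necessity argument is correct and is essentially the paper's proof: the same base changes $(b_1b_2,b_2)$, $(b_1,b_1b_2)$, $(b_1b_2^{-xp^{o(b_2)-o(b_1)}},b_2)$, $(b_2,b_1)$, and $(b_1,b_1^{-xp^{o(b_1)-o(b_2)}}b_2)$ appear there, and your reading of the sign rule in the reduction principle is accurate, so the surviving case $0<o(b_1)=o(b_2)$, $\sigma=(-1,-1)$ is correctly isolated.

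The gap is in sufficiency. Your plan is to show that any two bases satisfying (1)--(3) yield the same tuple $(\sigma(b_1),\sigma(b_2),o(b_1),o(b_2))$, and you propose to read this off from the invariants $\log_p|H|$, $n_1-n_2$, and ``$H\bmod 4$''. But you neither prove that all satisfying bases fall into the \emph{same} alternative of (3), nor that within an alternative the values of $o(b_1),o(b_2)$ are determined. For instance, ruling out that one satisfying basis has $o(b_1)=0$ while another has $0<o(b_1)=o(b_2)$ with $\sigma=(-1,-1)$, or that two bases in alternative (d) give different pairs $(o(b_1),o(b_2))$, is not automatic from the invariants you list: it requires knowing how the image in $H$ of elements of order $p^{n_1}$ (resp.\ $p^{n_2}$) in $G/G'$ is constrained, and that is exactly the computation you have deferred.

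The paper does not attempt an intrinsic characterisation. Instead it fixes $b$ satisfying (1)--(3), takes $\bar b\in\B'$, writes $\bar b_iG'=b_1^{x_i}b_2^{y_i}G'$, and verifies the four equalities $\sigma(b_i)=\sigma_i$ and $o(b_i)=o_i$ one at a time by computing $r(\bar b_i)=r(b_1)^{x_i}r(b_2)^{y_i}$ and exploiting the divisibility and parity constraints on $x_i,y_i$ forced by $\bar b\in\B$. The $\sigma$-equalities are short; the $o$-equalities require a genuine case split along the alternatives of (3), and the $p=2$ bookkeeping you anticipate (distinguishing $r\equiv -1$ from $r\equiv -1+2^{m-1}$, and handling the possibility $o(b_i)=1$ while $o_i=0$) is exactly what consumes the second half of the paper's proof. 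Your instinct about where the difficulty lies is right, but the argument is not complete until that case analysis is actually carried out.
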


\begin{proof}
Suppose that $b\in \B'$. Most of the arguments of this part of the proof are by contradiction. More precisely, under the assumption that one of the conditions fails we construct another element $(\overline{b}_1,\overline{b}_2)\in \B$ with $(\sigma(b_1),\sigma(b_2),o(b_1),o(b_2))>_{lex} (\sigma(\overline{b}_1),\sigma(\overline{b}_2),o(\overline{b}_1),o(\overline{b}_2))$.
	
	If $\sigma(b_1)=1$ and $\sigma(b_2)=-1$ then $\overline{b}=(b_1b_2,b_2)\in \B$ and $-1=\sigma(\overline{b}_1)<\sigma(b_1)$ contradicting the minimality.
	This proves \eqref{sigma11}.
	
	If $n_1=n_2$ and $\sigma(b_1)\ne \sigma(b_2)$ then $\sigma(b_1)=-1$ and $\sigma(b_2)=1$. Then $\overline{b}=(b_1,b_1b_2)\in \B$ with $\sigma(b_1)=\sigma(\overline{b}_1)$ and $\sigma(\overline{b}_2)=-1<\sigma(b_2)$, contradicting the minimality.
	This proves \eqref{nsIguales}.
	
	Assume first that $o(b_2)\geq o(b_1)$. 
	Then $r(b_1b_2^{-xp^{o(b_2)-o(b_1)}})=\pm 1$ for some integer $x$ with  $p\nmid x$ and negative sign in case $p=2$, $\sigma(b_1)=-1$ and either $\sigma(b_2)=1$ or $o(b_2)>o(b_1)$.  
	Then $\overline b=(b_1 b_2^{-xp^{o(b_2)-o(b_1)}},b_2) \in \B$ and hence $o(\overline{b}_1)=0$. 
	If moreover $\sigma(b_1)=1$ then $\sigma(\overline{b}_1)=\sigma(b_1)$ and hence, since $\sigma(b_2)=\sigma(\bb_2)$, necessarily $o(b_1)=0$. 
	If $\sigma(b_1)=-1$, and either $\sigma(b_2)=1$ or $o(b_2)>o(b_1)$ then also $\sigma(\overline{b}_1)=-1$ so that $o(b_1)=0$. 
	Thus in this case either \eqref{o10} or \eqref{osIguales} holds.  
	
	Now assume that  $o(b_1)>o(b_2)$.
	This implies that $n_2<n_1$, since otherwise both $\overline b=(b_2,b_1)$ and $\widehat{b}=(b_1,b_1b_2)$ belong to $\B$, and we obtain a contradiction because if $\sigma(b_2)=\sigma(b_1)$ then $\sigma(\overline{b}_1)=\sigma(b_2)=\sigma(b_1)$ and $o(\overline{b}_1)=o(b_2)<o(b_1)$, contradicting the minimality, and otherwise, i.e. if $\sigma(b_1)\ne \sigma(b_2)$ then $\sigma(b_1)=\sigma(\widehat{b}_1)$ and $\sigma_2(\widehat{b})=-1<\sigma(b_2)$.
	Thus, if $o(b_2)=0$ then condition \eqref{o20} holds. 
	Assume otherwise, so $o(b_2)\neq 0$ and let $x$ be a integer coprime to $p$ such that $r(b_2b_1^{-xp^{o_1-o_2}})\equiv\pm 1\bmod p^m$.
	If $o(b_2)+n_1-n_2\leq o(b_1)$ then $\overline b=(b_1,b_1^{-xp^{o(b_1)-o(b_2)}}b_2) \in \B$, $\sigma(\bb_1)=\sigma(b_1)$,  $\sigma(\overline{b}_2)=\sigma(b_2)$,  $o(\overline{b}_1)=o(b_1)$, and $o(\overline b_2)=0<o(b_2)$, a contradiction. 
	Thus $o(b_1)<o(b_2)+n_1-n_2$ and condition \eqref{osNo0} holds.

	Conversely, assume that $b$ verifies the conditions (1)-(3) and 
	let $s_i=r(b_i)$ for $i=1,2$. 
	By the minimality $(\sigma_1,\sigma_2,o_1 ,o_2 )\leq_{\lex} (\sigma(b_1),\sigma(b_2),o(b_1),o(b_2))$ and we have to prove that the equality holds.   To this end  fix $\overline b \in \B'$, and take integers $x_i,$ and $y_i$  such that $\overline  b_iG'= b_1^{x_i}  b_2^{y_i}G'$ for $i=1,2$. Thus $o_i=o_i(\overline b)$, $\sigma_i=\sigma_i(\overline b)$ and $r(\overline b_i)\equiv s_1^{x_i}s_2^{y_i} \bmod p^m$.
	
	Of course if $\sigma(b_1)=-1$ then $\sigma(b_1)=\sigma_1$. 
	Otherwise, $\sigma(b_1)=\sigma(b_2)=1$ by condition (1), and hence $\sigma_i(\overline{b})=1$ for $i=1,2$. This proves that $\sigma(b_1)=\sigma_1$.
	
	If $\sigma(b_2)\ne \sigma_2$ then $\sigma(b_1)=-1=\sigma_1$ and $\sigma(b_2)=1$. Then $p=2$, $n_1\ne n_2$ by (2), and hence $x_2$ is even and $y_2$ is odd, which implies that $\sigma(b_2)=\sigma_2$, a contradiction. Thus $\sigma(b_2)=\sigma_2$.

	By means of contradiction suppose that $o_1<o(b_1)$. 
	In particular $o(b_1)\ne 0$, i.e. $b$ does not satisfy (3a). 
	Suppose that condition (3b) holds. Then $o(b_1)=o(b_2)$ and $\sigma_1=\sigma(b_1)=\sigma(b_2)=-1$. 
	Thus $p=2$, $m\ge 2$ and $\GEN{s_1}=\GEN{s_2}$.
	Therefore $s_1\equiv s_2 \bmod 4$ and hence $s_1^{x_1+y_1}\equiv s_1^{x_1}s_2^{y_1}\equiv r(\overline b_1)\equiv -1 \bmod 4$.
	Then $x_1\not\equiv y_1\bmod 2$ and therefore $|\overline b_1C_G(G')|=|b_1C_G(G')|=2^{o(b_1)}\ne 2^{o_1}$.
	Thus, by \eqref{oComoOrden}, $o_1=0$ and $s_1^{x_1}s_2^{y_1}\equiv -1 \bmod 2^m$.
	As $o(b_1)=o(b_2)>o_1$ and $x_1\not\equiv y_1\bmod 2$, it follows that $o(b_1) =o(b_2)=1$, $m\ge 3$ and either $2\nmid x_1$ and $s_1\equiv -1+2^{m-1} \bmod 2^m$ or $2\nmid y_1$ and $s_2\equiv -1+2^{m-1} \bmod 2^m$. In both cases $-1\equiv -1+2^{m-1}\bmod 2^m$, a contradiction.
	This proves that  $o(b_2)<o(b_1)$ and  $n_2<n_1$. 
	Therefore $p \mid x_2$, so $p\nmid x_1y_2$ and 
	$|\overline b_1C_G(G')| =\ord_{p^m}(s_1^{x_1}s_2^{y_1})=\ord_{p^m}(s_1)=p^{o(b_1)}\ne p^{o_1}$. 
	Again this implies that $o_1=0$, $\sigma_1=-1$ and $o(b_1)=1$, so that $o(b_2)=0$ and $-1\equiv s_1^{x_1}s_2^{y_1} \equiv \pm s_1 \bmod 2^m$ which is not possible because $s_1\not\in \GEN{-1}$, as $o(b_1)=1$. This proves that $o_1=o(b_1)$.

	Finally if, $o_2\ne o(b_2)$ then $o(b_2)\ne 0$.
	Hence $b$ does not satisfy (3c). 
	If $o_1=0$ then $\pm 1 \equiv \pm s_2^{y_1} \bmod p^m$ and the signs must agree for otherwise $p=2$, $m\ge 2$ and $-1\equiv s_2^{y_1}\bmod 2^m$ which is  only possible if $y_1$ is odd and $s_2\equiv -1 \bmod 2^m$ in contradiction with $o(b_2)\ne 0$, i.e. $b$ does not satisfy (3a).
	If $o_1=o_2$ then $o(b_1)=o_1=o_2<o(b_2)$ and hence, by assumption, $o_1=0$, which we have just seen that it is not possible. 
	Thus $b$ does not satisfy (3b) either. 
	Hence, (3d) holds, i.e. $0<o(b_2)<o(b_1) <o(b_2)+n_1-n_2$. 
	Thence $p^{o(b_1)-o(b_2)+1}\mid x_2$ and $p\nmid y_2$.
	Therefore $p^{o_2}<p^{o(b_2)}=\ord_{p^m}(s_2)= \ord_{p^m}(s_1^{x_2} 
s_2^{y_2})=\ord_{p^m}(r(\bb_2))$. Then, by \eqref{oComoOrden}, $p=2$, $o_2=0$, $o(b_2)=1$ and $\sigma_2=-1$, yielding the following  contradiction: $-1 \equiv r(\bb_2) \equiv s_1^{x_2} s_2^{y_2} \equiv-1 +2^{m-1} \bmod 2^m$. Hence $o_2=o(b_2)$.
\end{proof}

\begin{proposition}\label{Fijando-r}
	Let $p$ be a prime  integer and let $G$ be a non-abelian group with $G'\cong C_{p^m}$ and  $G/G'\cong C_{p^{n_1}}\times C_{p^{n_2}}$ with $n_2\le n_1$.
	Let $\sigma o=(\sigma_1,\sigma_2,o_1,o_2)$ and let $r_1$ and $r_2$ be given as in \eqref{def-erres}.
	\begin{enumerate}
		\item\label{ImparSigma1} If $p\ne 2$ then $\sigma_1=1$.
		\item\label{sigma1} If $\sigma_1=1$ then $\sigma_2=1$. 
		\item\label{nIgualSigmaIgual} If $n_1=n_2$ then $\sigma_1=\sigma_2$.		
		\item\label{CondicionesOs} One of the following conditions holds:
		\begin{enumerate}
			\item\label{O10}  $o_1=0$. 
			\item\label{OIgual} $0<o_1=o_2$ and $\sigma_1=\sigma_2=-1$.		
			\item\label{O20}  $0=o_2<o_1$ and  $n_2<n_1$. 
			\item\label{OsNo0}  $0< o_2 < o_1 < o_2 +n_1-n_2$. In particular, $n_2<n_1$.
		\end{enumerate}   
		\item\label{bsConErres} $\B_r\ne \emptyset$, i.e. $\B$ contains an element $(b_1,b_2)$ such that $a^{b_i}=a^{r_i}$ for every $a\in G'$ and $i=1,2$.
	\end{enumerate}
\end{proposition}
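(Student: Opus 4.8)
The plan is to deduce the whole statement from Lemma~\ref{Fijando-rLema} and the order-theoretic description of $r(g)$ in \eqref{rosigma}. First I would record that the hypotheses already force $G$ to be $2$-generated: since $G'\cong C_{p^m}$ is cyclic it lies in $\Phi(G)$, so $G/\Phi(G)$ is a quotient of the $2$-generated group $G/G'\cong C_{p^{n_1}}\times C_{p^{n_2}}$, and as $G$ is non-abelian this gives $d(G)=2$. Hence $G$ is a group of the type fixed above and all the preceding notation applies to it. Since $\sigma o$ is by definition the lexicographic minimum over $\B$, that minimum is attained and $\B'\ne\emptyset$; so I fix once and for all a basis $b=(b_1,b_2)\in\B'$, for which $\sigma_i=\sigma(b_i)$ and $o_i=o(b_i)$ ($i=1,2$).

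With this $b$ chosen, parts \eqref{sigma1}, \eqref{nIgualSigmaIgual} and \eqref{CondicionesOs} become verbatim translations of conditions \eqref{sigma11}, \eqref{nsIguales} and \eqref{OrdenOs} of Lemma~\ref{Fijando-rLema}: the first gives $\sigma_1=1\Rightarrow\sigma_2=1$, the second gives $n_1=n_2\Rightarrow\sigma_1=\sigma_2$, and the third yields the four alternatives of \eqref{CondicionesOs} once one reads $\sigma(b)=(-1,-1)$ in case \eqref{osIguales} as $\sigma_1=\sigma_2=-1$. Part \eqref{ImparSigma1} needs no basis: by \eqref{sigmag=-1} the value $-1$ of $\sigma$ only occurs for $p=2$, so $p\ne 2$ forces $\sigma_1=\sigma(b_1)=1$.

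It remains to prove \eqref{bsConErres}, and here the idea is to upgrade an equality of cyclic subgroups to an equality of elements. By \eqref{rosigma} one has $\GEN{r(b_i)}=\GEN{\sigma_i(1+p^{m-o_i})}=\GEN{\sigma_i+p^{m-o_i}}$, whereas the remark following \eqref{def-erres} asserts exactly that $\GEN{r_i}=\GEN{\sigma_i+p^{m-o_i}}$, for both branches of the definition of $r_2$. Thus $\GEN{r(b_i)}=\GEN{r_i}$ inside $\U_{p^m}$, a cyclic $p$-group of order $p^{o_i}$, so I may write $r_i=r(b_i)^{e_i}$ with $p\nmid e_i$. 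Since $g\mapsto r(g)$ is a homomorphism, the elements $\tilde b_i=b_i^{e_i}$ satisfy $r(\tilde b_i)=r(b_i)^{e_i}=r_i$; and because $p\nmid e_i$ we have $\GEN{\tilde b_iG'}=\GEN{b_iG'}$ with $|\tilde b_iG'|=p^{n_i}$, so $(\tilde b_1,\tilde b_2)\in\B$. By construction $a^{\tilde b_i}=a^{r_i}$ for all $a\in G'$, whence $(\tilde b_1,\tilde b_2)\in\B_r$ and $\B_r\ne\emptyset$.

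The only genuinely delicate point I foresee is the identity $\GEN{r_2}=\GEN{\sigma_2+p^{m-o_2}}$ in the second branch of \eqref{def-erres}, that is, in case \eqref{OsNo0} with $0<o_2<o_1$: one must check that $(1+p^{m-o_1})^{p^{o_1-o_2}}$ falls back into $\GEN{1+p^{m-o_2}}$ and not into the \emph{other} cyclic subgroup of order $p^{o_2}$ that $\U_{2^m}$ possesses when $p=2$. This is precisely the sort of computation in the $\U_{p^m}$ congruence filtration gathered in \Cref{Apendice}, and is what the remark after \eqref{def-erres} records; granting it, the argument above is complete.
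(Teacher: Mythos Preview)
Your proof is correct and follows essentially the same route as the paper's: deduce \eqref{ImparSigma1} from \eqref{sigmag=-1}, read off \eqref{sigma1}--\eqref{CondicionesOs} from Lemma~\ref{Fijando-rLema} applied to any $b\in\B'$, and for \eqref{bsConErres} use \eqref{rosigma} together with the observation after \eqref{def-erres} to get $\GEN{r(b_i)}=\GEN{r_i}$, then replace $b_i$ by a suitable $p'$-power. Your added remarks (that $G$ is forced to be $2$-generated, and the caution about $\GEN{r_2}$ in the second branch of \eqref{def-erres}) are sound but not needed beyond what the paper already records; note only that the parenthetical ``a cyclic $p$-group of order $p^{o_i}$'' is not literally true when $\sigma_i=-1$ and $o_i=0$, though this does not affect the argument since all that matters is $p\nmid e_i$.
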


\begin{proof}
\eqref{ImparSigma1} is a direct consequence of \eqref{sigmag=-1}.
Statements \eqref{sigma1}, \eqref{nIgualSigmaIgual} and \eqref{CondicionesOs} follow directly from \Cref{Fijando-rLema}.
Fix $(b_1,b_2)\in \B'$.
	Using \eqref{rosigma} it easily follows that $r_i$ and $r(b_i)$ generate  	the same multiplicative group in $\U_{p^m}$.
	Thus there are integers $x$ and $y$ with $p\nmid xy$ and $r_i=r(b_i)^{x_i}$. 
	Then $(\bb_1,\bb_2)=(b_1^x,b_2^y)\in \B$  and $r(\overline{b}_i)=r_i$, i.e. $a^{b_i}=a^{r_i}$ for every $a\in G'$.
	Therefore $(\bb_1,\bb_2)\in \B_r$.
\end{proof}

In \Cref{Fijando-r} we have obtained some restrictions for $\sigma o$. 
We now obtain some restrictions on the $o'_i$'s and $u_i$'s. 
To this end, we fix $b=(b_1,b_2)\in \B_r$.
Recall that $|b_i|=p^{n_i+o'_i(b)}$ and hence 
\begin{equation}\label{to'u}
0\le o'_i(b)=m-v_p(t_i(b))\le m,  \quad 1\le u_i(b) \le p^{o'_i(b)} \qand p\nmid u_i(b) \quad (i=1,2).
\end{equation}
From \eqref{Def-TesFunciones} and \eqref{CBAPotenciaga} it follows
\begin{eqnarray}
\label{RelprN}   r_i^{p^{n_i}} & \equiv & 1 \bmod p^m, \\
\label{Relptr}   t_i(b)r_i & \equiv & t_i(b) \bmod p^m \\
\label{Relpt1r2} \Ese{r_1}{p^{n_1}} & \equiv & t_1(b)(1-r_2) \bmod p^m, \\
\label{Relpt2r1} \Ese{r_2}{p^{n_2}}& \equiv & t_2(b)(r_1-1)\bmod p^m.
\end{eqnarray}

\begin{lemma}\label{oo'}
	The following statements hold for every $b\in \B_r$:
	\begin{enumerate}
		\item\label{Cotao'}   $o'_i(b)\le m-o_i$ and if $\sigma_i=-1$ then $o'_i(b)\le 1$ and $u_i(b)=1$, for $i=1,2$.
		\item\label{oMenorn} $o_i<n_i$, for $i=1,2$.
		\item\label{oo'Sigmas1} If $\sigma_1=1$ then the following conditions 
hold:
		\begin{enumerate}
			\item $o_2+o'_1(b)\le m\le n_1$ and if $m=n_1$ then $o_1o_2=0$.
			\item Either $o_1+o'_2(b)\le m\le n_2$ or $2m-o_1-o'_2(b)=n_2<m$ and 
$u_2(b)\equiv 1 \bmod p^{m-n_2}$.
		\end{enumerate}
		\item\label{oo'SigmasDistintas} If $\sigma_1\ne \sigma_2$ then one of the following conditions hold: 
		\begin{enumerate}
			\item $m\le n_2$ and $o'_2(b)\le 1$.
			\item $m-o'_2(b)+1=n_2<m$, $u_2(b)(1+2^{m-o_1-1})\equiv - 1 \bmod 2^{m-n_2}$ and  $1\le u_2(b) \le 2^{m-n_2+1}$.
		\end{enumerate}
	\end{enumerate}
\end{lemma}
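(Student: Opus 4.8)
The plan is to read all four statements off the four relations \eqref{RelprN}--\eqref{Relpt2r1} by comparing $p$-adic valuations of the two sides, and for the congruences on $u_2(b)$ also their unit parts. I will use throughout that $[b_2,b_1]$ generates $G'\cong C_{p^m}$, so that $[b_2,b_1]^x=[b_2,b_1]^y$ amounts to $x\equiv y\bmod p^m$ and $v_p(t_i(b))=m-o'_i(b)$. The valuations of the linear factors are immediate from \eqref{def-erres}: when $\sigma_i=1$ one has $v_p(r_i-1)=m-o_i$ (read as $r_i\equiv1$, valuation $\infty$, when $o_i=0$), and when $\sigma_i=-1$ one has $p=2$, $o_i\le m-2$ by \eqref{Cotao}, and $v_2(r_i-1)=1$. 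The valuations of the power sums are supplied by \Cref{Apendice}: for $\sigma_i=1$ one gets $v_p(\Ese{r_i}{p^{n_i}})=n_i$, whereas for $\sigma_i=-1$ one gets $v_2(\Ese{r_i}{2^{n_i}})=m-o_i+n_i-1$ if $o_i>0$ and $\Ese{r_i}{2^{n_i}}\equiv0$ if $o_i=0$.

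Statement \eqref{Cotao'} uses only \eqref{Relptr}: the relation $t_i(b)(r_i-1)\equiv0\bmod p^m$ reads $v_p(r_i-1)\ge o'_i(b)$, which gives $o'_i(b)\le m-o_i$ when $\sigma_i=1$ and $o'_i(b)\le1$ when $\sigma_i=-1$; in the latter case $t_i(b)=u_i(b)2^{m-o'_i(b)}$ with $m-o'_i(b)\in\{m-1,m\}$ and $1\le t_i(b)\le2^m$ force $u_i(b)=1$. For \eqref{oMenorn} I note $o_i\le n_i$ from \eqref{RelprN} and exclude $o_i=n_i$ (the case $o_i=0$ being trivial since $n_i\ge1$): reading \eqref{Relpt1r2} for $i=1$ and \eqref{Relpt2r1} for $i=2$, the left-hand side has integer valuation $n_i<m$ (namely $o_i$ if $\sigma_i=1$, and $m-1$ if $\sigma_i=-1$), hence is nonzero modulo $p^m$; the bound on $o'_i(b)$ from \eqref{Cotao'} then forces the right-hand side to vanish modulo $p^m$ or to carry a valuation different from that of the left-hand side, so the relation cannot hold (in the one subcase where both valuations are below $m$, equating them and using $o'_i(b)\le m-o_i$ yields $o_{3-i}\ge m$ or $o_i\ge m$, against \eqref{Cotao}).

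For \eqref{oo'Sigmas1} (so $\sigma_1=\sigma_2=1$ by \Cref{Fijando-r}\eqref{sigma1}) the two parts come from the two relations. In \eqref{Relpt1r2} the factor $1-r_2$ has the large valuation $m-o_2$, and since $o_2\le o_1$ by \Cref{Fijando-r}\eqref{CondicionesOs} and $o'_1(b)\le m-o_1$, the right-hand side has valuation $\ge m$; hence the left-hand side vanishes modulo $p^m$, forcing $n_1\ge m$ and thus $o_2+o'_1(b)\le m\le n_1$. If moreover $m=n_1$ and $o_1,o_2>0$, then \Cref{Fijando-r}\eqref{CondicionesOs} places us in case \eqref{OsNo0} with $n_2<n_1=m$, so \eqref{Relpt2r1} has nonzero left-hand side of valuation $n_2<m$; equating valuations gives $2m-o'_2(b)-o_1=n_2$, which together with $o'_2(b)\le m-o_2$ and $o_1<o_2+n_1-n_2$ is contradictory, whence $o_1o_2=0$. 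Part (b) comes from \eqref{Relpt2r1}, where now the factor $r_1-1$ carries the smaller valuation $m-o_1$: if $n_2\ge m$ the left-hand side vanishes and forces $o_1+o'_2(b)\le m\le n_2$; if $n_2<m$ the two valuations must agree, giving $2m-o_1-o'_2(b)=n_2$, and matching unit parts (the explicit unit of $\Ese{r_2}{p^{n_2}}$ from \Cref{Apendice} against that of $r_1-1$) yields $u_2(b)\equiv1\bmod p^{m-n_2}$.

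Finally, \eqref{oo'SigmasDistintas} is the same computation applied to \eqref{Relpt2r1} in the remaining case $p=2$, $\sigma_1=-1$, $\sigma_2=1$, where $v_2(r_1-1)=1$, so the right-hand side has valuation $(m-o'_2(b))+1$: if $n_2\ge m$ the left-hand side vanishes and $o'_2(b)\le1$, while if $n_2<m$ the valuations coincide, giving $m-o'_2(b)+1=n_2$, and the unit comparison produces $u_2(b)(-1+2^{m-o_1-1})\equiv1\bmod2^{m-n_2}$; the range $1\le u_2(b)\le2^{m-n_2+1}=2^{o'_2(b)}$ is then \eqref{to'u}. I expect the only genuinely delicate point to be these two unit congruences, since they depend on the exact unit part (not merely the valuation) of $\Ese{r_2}{p^{n_2}}$ and of $r_1-1$; this is precisely the bookkeeping isolated in \Cref{Apendice}, and once it is in hand the valuation arguments above are routine.
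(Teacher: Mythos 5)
Your proposal follows the same route as the paper: every assertion is read off \eqref{RelprN}--\eqref{Relpt2r1} by comparing $p$-adic valuations, with the Appendix supplying valuations and unit parts of the power sums. Parts \eqref{Cotao'} and \eqref{oMenorn}, the implication $m=n_1\Rightarrow o_1o_2=0$, part \eqref{oo'Sigmas1}(b) and part \eqref{oo'SigmasDistintas} are argued with essentially the paper's ingredients (one small inaccuracy: in \eqref{oMenorn}, in the case $\sigma_1=-1$, $\sigma_2=1$, $i=2$, equating valuations gives $o'_2(b)=m-o_2+1$, which contradicts \eqref{Cotao'} directly rather than producing ``$o_i\ge m$'' as your parenthetical claims; the method still goes through).

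The genuine gap is in your proof of the first assertion of \eqref{oo'Sigmas1}(a). You claim $o_2\le o_1$ ``by \Cref{Fijando-r}.\eqref{CondicionesOs}'' and use it, together with $o'_1(b)\le m-o_1$, to force the valuation $2m-o'_1(b)-o_2$ of $t_1(b)(1-r_2)$ to be at least $m$. But alternative \eqref{O10} of \Cref{Fijando-r}.\eqref{CondicionesOs} is simply ``$o_1=0$'', with no constraint on $o_2$, and $o_1=0<o_2$ genuinely occurs with $\sigma_1=\sigma_2=1$: for instance $I=(3,2,2,2,1,1,0,1,0,0,1,1)$ satisfies all conditions of the main theorem, so $\mathcal{G}_I$ is such a group. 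In that situation your bound only yields valuation $\ge m-o_2$, so the branch in which both sides of \eqref{Relpt1r2} are nonzero modulo $p^m$ with equal valuations, i.e. $n_1=2m-o_2-o'_1(b)<m$, is not excluded, and neither $m\le n_1$ nor $o_2+o'_1(b)\le m$ follows. The missing idea is that the two relations must be played against each other, which is exactly what the paper does: if $n_1<m$ then also $n_2\le n_1<m$, so \emph{both} relations degenerate to the equalities $n_1=2m-o_2-o'_1(b)$ and $n_2=2m-o_1-o'_2(b)$; adding them and regrouping the four terms as $(o_1+o'_1(b))+(o_2+o'_2(b))\le 2m$ (your part \eqref{Cotao'}) gives $n_1+n_2\ge 2m$, contradicting $n_1,n_2<m$. (Alternatively, when $o_1=0$, relation \eqref{Relpt2r1} with $n_2<m$ would force $o'_2(b)=2m-n_2>m$, impossible by \eqref{to'u}; either way one relation must rescue the other, and your relation-by-relation decoupling is precisely what breaks.) A last caution: your unit congruence in \eqref{oo'SigmasDistintas}(b) inherits a sign problem of the paper itself, since computing with $r_1\equiv-(1+2^{m-o_1})$ as in \eqref{def-erres} produces $u_2(b)(-1-2^{m-o_1-1})\equiv 1\bmod 2^{m-n_2}$, which coincides with the stated $u_2(b)(-1+2^{m-o_1-1})\equiv 1$ only when $o_1\le n_2$; that normalization should be checked rather than quoted.
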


\begin{proof}
	\eqref{Cotao'}  Suppose firstly that $\sigma_i=-1$. 
	Then $p=2$, $m\ge 2$ and $r_i\equiv -1 \bmod 4$, by \eqref{sigmag=-1}. Then $v_2(r_i-1)=2$ and from \eqref{to'u} and \eqref{Relptr} it follows that $m-1\le v_2(t_i(b)) = m-o'_i(b)$, so that $o'_i(b)\le 1$ and $o_i+o'_i(b)\le m-1$, by \eqref{Cotao}.
	The former implies that $u_i(b)=1$.
	Suppose otherwise that $\sigma_i=1$. Then $v_p(r_i-1)=m-o_i$ and \eqref{to'u} and \eqref{Relptr} imply that $m\le v_p(t_i(b))+v_p(r_i-1)=2m-(o_i+o'_i(b))$.
	
	For the remainder of the proof we consider separately the different values of $\sigma_1$ and $\sigma_2$.
	
	Suppose that $\sigma_1=1$. 
	Then $\sigma_2=1$, by \Cref{Fijando-r}.\eqref{sigma1}. 
	Hence $v_p(r_i-1)=m-o_i$ and either $p$ is odd or $r_1\equiv r_2 \equiv
1 \bmod 4$.
	Thus $\ord_{p^m}(r_i)=p^{o_i}$ and $v_p(\Ese{r_i}{p^{n_i}})=n_i$, by 
\Cref{EseProp}.\eqref{ValEse}. 
	Then $o_i\le n_i$, by \eqref{RelprN} and combining this with \eqref{to'u}, \eqref{Relpt1r2} and \eqref{Relpt2r1} we deduce that
	$$o_2+o'_1(b)\le m \le n_1 \text{ or } n_1=2m-o_2-o'_1(b)<m$$
	and
	$$o_1+o'_2(b)\le m \le n_2 \text{ or } n_2=2m-o_1-o'_2(b)<m.$$
As $n_2\le n_1$, if $n_1<m$ then we obtain a contradiction because
	$$2m > n_1+n_2 = 4m-( o_2+o'_1(b)+o_1+o'_2(b)) \ge 2m,$$
by \eqref{Cotao'}.
Thus $o_2+o'_1(b)\le m\le n_1$ and hence $o_1<m\le n_1$.
If $o_2\ge n_2$ then $n_2<m$ and hence
	$$m\ge  o_2+o'_2(b)\geq n_2+o'_2(b)=2m-o_1>m,$$ a contradiction.
This proves \eqref{oMenorn} in this case.
Moreover, if $n_2<m$ then by \eqref{Relpt2r1} and \Cref{PropEse}.\eqref{EseCero}
	$$u_2(b) p^{n_2}\equiv u_2(b) p^{2m-o'_2(b)-o_1}\equiv  t_2(b) (r_1-1)\equiv  \Ese{r_2}{p^{n_2}} \equiv p^{n_2} \bmod 2^m,$$
and hence $u_2(b)\equiv 1 \bmod p^{m-n_2}$.
Finally assume that $m=n_1$. If $n_2=n_1$ then $o_1=0$, by \Cref{Fijando-r}.\eqref{CondicionesOs}.
Otherwise $n_2<n_1=m$ and hence $n_2=2m-o_1-o'_2(b)$.
Then, by \eqref{Cotao'},
	$$n_1-n_2=m-n_2=-m+o_1+o_2'(b)\le o_1-o_2.$$
Therefore $o_2=0$, by \Cref{Fijando-r}.\eqref{CondicionesOs}.
	This finishes the proof of \eqref{oo'Sigmas1}.
	
	Suppose that $\sigma_2=-1$. Then $\sigma_1=-1$, by \Cref{Fijando-r}.\eqref{sigma1} and hence $p=2$, $m\ge 2$,  $r_i\equiv -1 \bmod 4$ and $v_2(r_i+1)=m-o_i$ for $i=1,2$.
	Moreover, $v_p(t_i(b))=m-o'_i(b)\ge m-1$, 
	by \eqref{Cotao'} and therefore $\Ese{r_i}{2^{n_i}}\equiv 0 \bmod 2^m$ by
\eqref{Relpt1r2} and \eqref{Relpt2r1}. 
By \Cref{EseProp}.\eqref{ValEse-1}
	$$m\le v_2(\Ese{r_i}{2^{n_i}}) = v_2(r_i+1)+n_i-1=m-o_i+n_i-1,$$
that is $o_i\le n_i-1$, proving \eqref{oMenorn} in this case.
	
Finally, suppose that $\sigma_1=-1$ and $\sigma_2=1$. Then $p=2$, $m\ge 2$, $r_1\equiv -1 \bmod 4$ and $r_2\equiv 1 \bmod 4$. By \eqref{Cotao'},
$o'_1(b)\le 1$ and hence $v_2(t_1(b))=m-o'_1(b)\ge m-1$. Thus $t_1(b)(r_2-1)\equiv 0 \bmod 2^m$ and, by \eqref{Relpt1r2} and \Cref{EseProp}.\eqref{ValEse-1},
	$$m\le v_2(\Ese{r_1}{2^{n_1}})=n_1+v_2(r_1+1)-1=n_1+m-o_1-1,$$
i.e. $o_1\le n_1-1$.
Moreover, as $r_2\equiv 1 \bmod 4$, by \Cref{EseProp}.\eqref{ValEse}, $v_2(\Ese{r_2}{2^{n_2}})=n_2$ and hence \eqref{Relpt2r1} implies that either $m\le n_2$ and $m\le v_p(t_2(b))+v_2(r_1-1)=m-o'_2(b)+1$ or $m>n_2=m-o'_2(b)+1$.
In the former case $o_2<m\le n_2$ and $o'_2(b)\le 1$.
In the latter case, by \eqref{Relpt2r1} and \Cref{EseProp}.\eqref{EseCero},
	$$u_2(b)2^{n_2}(-1 - 2^{m-o_1-1})=u_2(b)2^{n_2-1}(-2  - 2^{m-o_1})=t_2(b)(r_1-1)\equiv \Ese{r_2}{p^{n_2}} \equiv 2^{n_2} \bmod 2^m.$$
Thus $o_2\le m-o'_2(b)<n_2$, which finishes the proof of \eqref{oMenorn}, and $u_2(b)(-1 - 2^{m-o_1})\equiv 1 \bmod 2^{m-n_2}$, which finishes the proof of \eqref{oo'SigmasDistintas}.
\end{proof}

Combining \eqref{Cotao} and \Cref{oo'} we obtain the following:

\begin{corollary}\label{Cotasoo'}
	Let $G$ be a non-abelian cyclic-by-abelian finite group of prime-power order with
	$\inv(G)=(p,m,n_1,n_2,\sigma_1,\sigma_2,o_1,o_2,o'_1,o'_2,u_1,u_2)$. 
	Then the following statements hold:
	\begin{enumerate}
		\item\label{Cotao'G} $o'_i\le m-o_i$, $o_1'\leq m-o_2$ and if $\sigma_i=-1$ then $o'_i\le 1$ and $u_i=1$.
		\item\label{CotaOs} $o_i\le \min(m,n_i)-1$ and if $p=2$ and $m\ge 2$ then $o_i\le m-2$.					
		\item\label{m>n1} If $m> n_1$ then $\sigma_1=-1$.
		\item\label{m=n1} If $\sigma_1=1$ and $m=n_1$ then $o_1o_2=0$.
		\item  Suppose that $\sigma_1\ne \sigma_2$.
		\begin{enumerate}	
		\item\label{sigmasDistintosm>=} If $m\le n_2$ then $o'_2\le 1$.
		\item\label{sigmasDistintosm<} If $m>n_2$ then $o'_2=m+1-n_2$, $u_2(1+2^{m-o_1-1}) \equiv -1 \bmod  2^{m-n_2}$ and $1\le u_2 \le 2^{m-n_2+1}$.
		\end{enumerate} 
	\end{enumerate}
\end{corollary}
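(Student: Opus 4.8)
The plan is to read off all five assertions from \eqref{Cotao} and \Cref{oo'} evaluated on a single suitably chosen basis. Since $(o'_1,o'_2)=\max_{\lex}\{o'(b):b\in\B_r\}$, the set $\B'_r$ is nonempty, and since $(u_2,u_1)=\min_{\lex}\{u(b):b\in\B'_r\}$ there is a basis $b=(b_1,b_2)\in\B'_r$ with $u(b)=(u_2,u_1)$. As $b\in\B'_r\subseteq\B_r$ we also have $o'(b)=(o'_1,o'_2)$, and $a^{b_i}=a^{r_i}$ for all $a\in G'$ forces $r(b_i)=r_i$ in $\U_{p^m}$, so \eqref{oComoOrden} and \eqref{sigmag=-1} give $o(b_i)=o_i$ and $\sigma(b_i)=\sigma_i$. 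Hence $o(b_i)=o_i$, $\sigma(b_i)=\sigma_i$, $o'_i(b)=o'_i$ and $u_i(b)=u_i$, and every claim below follows by substituting these identifications into the corresponding statement of \Cref{oo'} (applied to this $b$) and of \eqref{Cotao}.

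For \eqref{Cotao'G}, the bound $o'_i\le m-o_i$ and the implication $\sigma_i=-1\Rightarrow o'_i\le 1,\ u_i=1$ are precisely \Cref{oo'}.\eqref{Cotao'}. The extra bound $o'_1\le m-o_2$ is handled by cases: if $\sigma_1=1$ it is the inequality $o_2+o'_1(b)\le m$ of \Cref{oo'}.\eqref{oo'Sigmas1}; if $\sigma_1=-1$ then $p=2$ and $m\ge 2$ by \eqref{sigmag=-1}, so \eqref{Cotao} gives $o_2\le m-2$ and therefore $o'_1\le 1\le m-o_2$. For \eqref{CotaOs}, combining $o_i<m$ from \eqref{Cotao} with $o_i<n_i$ from \Cref{oo'}.\eqref{oMenorn} yields $o_i\le\min(m,n_i)-1$, and the clause for $p=2$, $m\ge 2$ is the second half of \eqref{Cotao}. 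Assertion \eqref{m>n1} is the contrapositive of \Cref{oo'}.\eqref{oo'Sigmas1} (which forces $m\le n_1$ as soon as $\sigma_1=1$), and \eqref{m=n1} is the last clause of that same item.

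For item (5) I would first fix the signs: since $\sigma_1\ne\sigma_2$ and \Cref{Fijando-r}.\eqref{sigma1} forbids $\sigma_1=1$ with $\sigma_2=-1$, we must have $\sigma_1=-1$ and $\sigma_2=1$, which is exactly the hypothesis of \Cref{oo'}.\eqref{oo'SigmasDistintas}. That item presents two alternatives selected by the sign of $m-n_2$: the first requires $m\le n_2$ and delivers $o'_2\le 1$, giving \eqref{sigmasDistintosm>=}; the second requires $n_2<m$ and delivers $m-o'_2(b)+1=n_2$ together with the congruence on $u_2$ and the bounds $1\le u_2\le 2^{m-n_2+1}$, giving \eqref{sigmasDistintosm<} after rewriting $m-o'_2+1=n_2$ as $o'_2=m+1-n_2$. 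Because the conditions $m\le n_2$ and $m>n_2$ are exhaustive and mutually exclusive, precisely the matching alternative applies in each case. The whole argument is essentially a transcription; the only step beyond direct substitution is the estimate $o'_1\le 1\le m-o_2$ in \eqref{Cotao'G}, and I expect the only real care to be keeping the two case splits (on $\sigma_1$ and on the sign of $m-n_2$) correctly aligned with the alternatives offered by \Cref{oo'}.
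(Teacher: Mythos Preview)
Your proof is correct and follows exactly the paper's approach: the paper simply states that the corollary follows by ``combining \eqref{Cotao} and \Cref{oo'}'', and you have carefully spelled out how, by picking $b\in\B_{rt}$ so that $o'_i(b)=o'_i$ and $u_i(b)=u_i$ and then reading each item off the corresponding clause of \Cref{oo'} (together with \eqref{Cotao} and, for item (5), \Cref{Fijando-r}.\eqref{sigma1}). The only step that is not pure transcription, the case $\sigma_1=-1$ in the bound $o'_1\le m-o_2$, you handle correctly via $o'_1\le 1$ and $o_2\le m-2$.
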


\section{Changing bases within $\B_r$}\label{SectionChangingBases}

In the previous section we proved that $\B_r\ne \emptyset$.
All throughout this section $b=(b_1,b_2)\in \B_r$ and $\overline b=(\overline{b}_1,\overline{b}_2)$ with $\overline b_i\equiv b_1^{x_i} b_2^{y_i} [b_2,b_1]^{z_i}$, where $x_i,y_i$ and $z_i$ are integers, for $i=1,2$.

Next lemma characterizes when $\bb$ belongs to $\B_r$.

\begin{lemma}\label{LemmaEnB2}  
	$\overline b\in \B_r$ if and only if the following conditions hold:
	\begin{enumerate}
		\item \label{EnB0EnB2} $p^{n_1-n_2}\mid x_2$ and $x_1y_2\not \equiv x_2y_1\bmod p$.
		\item\label{B2Sigma} 
	If $\sigma_2=-1$ then $x_1\not\equiv y_1 \bmod 2$ and $x_2\not\equiv y_2 \bmod 2$.
		\item \label{B2os}
		\begin{enumerate}
			\item If $o_1=0$ then $y_1\equiv y_2-1\equiv 0 \bmod p^{o_2}$.
			\item If $0<o_1=o_2$ then $x_1+y_1\equiv x_2+y_2\equiv 1 \bmod p^{o_1}$.
			\item If $o_2=0<o_1$ then $x_1-1\equiv x_2\equiv 0\bmod p^{o_1}$.
			\item If $0<o_2<o_1$ then $x_1+y_1p^{o_1-o_2}\equiv 1\bmod p^{o_1}$ and $x_2p^{o_2-o_1}+y_2\equiv 1\bmod p^{o_2} $. (Observe that  $x_2p^{o_2-o_1}\in\Z$ because $o_1-o_2< n_1-n_2$, by \Cref{Fijando-r}.\eqref{CondicionesOs}.)
		\end{enumerate}
	\end{enumerate}
\end{lemma}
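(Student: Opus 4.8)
The plan is to translate the condition $\overline b\in\B_r$ into two congruences in $\U_{p^m}$ and then read off \eqref{B2Sigma} and \eqref{B2os} from the structure of $\U_{p^m}$. Since $g\mapsto r(g)$ is a homomorphism $G\to\U_{p^m}$ with kernel $C_G(G')\supseteq G'$, and $\overline b_i\equiv b_1^{x_i}b_2^{y_i}[b_2,b_1]^{z_i}$ with $[b_2,b_1]\in G'$, we get $r(\overline b_i)=r_1^{x_i}r_2^{y_i}$. Hence $a^{\overline b_i}=a^{r_i}$ for every $a\in G'$ if and only if $r_1^{x_i}r_2^{y_i}\equiv r_i \mod p^m$. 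By the description of $\B$ recalled before \Cref{Fijando-rLema}, condition \eqref{EnB0EnB2} is exactly ``$\overline b\in\B$''. So, assuming \eqref{EnB0EnB2}, it remains to show that the congruences $r_1^{x_i}r_2^{y_i}\equiv r_i \mod p^m$, $i=1,2$, are jointly equivalent to \eqref{B2Sigma} and \eqref{B2os}; since every step is an equivalence this proves both implications at once.

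Next I would split each congruence into a sign part and a $p$-power part. If $p$ is odd then $\U_{p^m}$ is cyclic and $\sigma_j=1$ (\Cref{Fijando-r}.\eqref{ImparSigma1}), so there is no sign part; if $p=2$ and $m\ge 3$ then, writing $r_j=\sigma_j(1+p^{m-o_j})$ as in \eqref{rosigma}, the factor $1+2^{m-o_j}$ lies in $\GEN{5}$ (because $o_j\le m-2$ by \eqref{Cotao}, so $1+2^{m-o_j}\equiv 1\mod 4$), and $\sigma_j$ is the $\GEN{-1}$-component of $r_j$, detected modulo $4$ via \eqref{sigmag=-1} (the cases $m\le 2$, where $o_j=0$ and $r_j=\sigma_j$, being immediate). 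Thus $r_1^{x_i}r_2^{y_i}\equiv r_i$ splits into the sign equation $\sigma_1^{x_i}\sigma_2^{y_i}=\sigma_i$ and a congruence inside $\GEN{5}$. For the signs: if $\sigma_2=-1$ then $\sigma_1=-1$ by \Cref{Fijando-r}.\eqref{sigma1}, and $\sigma_1^{x_i}\sigma_2^{y_i}=\sigma_i$ becomes ``$x_i+y_i$ odd'', i.e. \eqref{B2Sigma}; if $\sigma_1=-1$ and $\sigma_2=1$ the equations force $x_1$ odd and $x_2$ even, but here $n_2<n_1$ (\Cref{Fijando-r}.\eqref{nIgualSigmaIgual}), so $p^{n_1-n_2}\mid x_2$ already gives $x_2$ even and then $x_1y_2\not\equiv x_2y_1\mod 2$ forces $x_1$ odd, so nothing new is required; and if $\sigma_1=\sigma_2=1$ the sign equations are vacuous. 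This is exactly why \eqref{B2Sigma} only mentions the case $\sigma_2=-1$.

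For the $p$-power part I would go through the four cases of \Cref{Fijando-r}.\eqref{CondicionesOs}, writing $\alpha=1+p^{m-o_1}$ (of order $p^{o_1}$) and inserting the value of $r_2$ from \eqref{def-erres}. If $o_1=0$ then $r_1=\sigma_1$ and $r_2=\sigma_2(1+p^{m-o_2})$, and the $\GEN{5}$-congruences $(1+p^{m-o_2})^{y_1}\equiv 1$ and $(1+p^{m-o_2})^{y_2}\equiv 1+p^{m-o_2}$ give $y_1\equiv 0$, $y_2\equiv 1\mod p^{o_2}$, i.e. \eqref{B2os}(a). If $o_2=0<o_1$ then $r_2=\sigma_2$ and we obtain $x_1\equiv 1$, $x_2\equiv 0\mod p^{o_1}$, i.e. \eqref{B2os}(c). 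If $0<o_1=o_2$ (so $\sigma_1=\sigma_2=-1$) then $r_1=r_2=\sigma_1\alpha$, and $\alpha^{x_i+y_i}\equiv\alpha$ yields $x_i+y_i\equiv 1\mod p^{o_1}$, i.e. \eqref{B2os}(b). If $0<o_2<o_1$ then the $\GEN{5}$-part of $r_2$ is $\alpha^{p^{o_1-o_2}}$, and the congruences read $x_1+p^{o_1-o_2}y_1\equiv 1\mod p^{o_1}$ and $x_2+p^{o_1-o_2}y_2\equiv p^{o_1-o_2}\mod p^{o_1}$; dividing the latter by $p^{o_1-o_2}$ yields $x_2p^{o_2-o_1}+y_2\equiv 1\mod p^{o_2}$, i.e. \eqref{B2os}(d). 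Collecting the four cases finishes the equivalence.

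The points needing the most care are the $p=2$ sign bookkeeping just discussed and the last case $0<o_2<o_1$. There one must first justify that $x_2p^{o_2-o_1}\in\Z$ and that dividing the second congruence by $p^{o_1-o_2}$ is legitimate; both follow from $o_1-o_2<n_1-n_2$ (\Cref{Fijando-r}.\eqref{CondicionesOs}), which gives $p^{o_1-o_2}\mid p^{n_1-n_2}\mid x_2$. One also needs the valuation identity $v_p\big((1+p^{m-o_1})^{p^{o_1-o_2}}-1\big)=m-o_2$, so that $\alpha^{p^{o_1-o_2}}=1+w\,p^{m-o_2}$ with $p\nmid w$ has order $p^{o_2}$ and generates the same subgroup as $1+p^{m-o_2}$; this is the one spot where the elementary number-theoretic lemmas collected in the appendix enter.
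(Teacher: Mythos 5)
Your proposal is correct and follows essentially the same route as the paper's proof: reduce $\overline b\in\B_r$ to the congruences $r_1^{x_i}r_2^{y_i}\equiv r_i \bmod p^m$, split each into its sign component and its component $\equiv 1 \bmod 4$ (the paper's $R_i=\sigma_i r_i$), handle the signs via \Cref{Fijando-r}, and read off \eqref{B2os} from $\ord_{p^m}(R_i)=p^{o_i}$ together with $R_2\equiv R_1^{p^{o_1-o_2}}$ when $o_1o_2\ne 0$. The only difference is one of detail: the paper compresses your explicit four-case verification (and the division by $p^{o_1-o_2}$ in case (3)(d)) into ``it follows easily,'' and your appeal to \eqref{rosigma} in the setup should really be to \eqref{def-erres}, which is what you in fact use in the case analysis.
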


\begin{proof}  
	Condition (1) only means that $\overline b\in \B$, so we can take it for granted. Under such assumption, $\overline b \in \B_r$ if and only if
	$r_i\equiv r_1^{x_i}r_2^{y_i} \bmod p^m$,  for $i=1,2$.
	Write $R_i=\sigma_i r_i$. 
	Observe that if $\sigma_i=-1$ then $p=2$, $m\ge 2$ and $R_i\equiv 1 \bmod 4$.
	Therefore $\bb\in\B_r$ if and only if 
	\begin{equation}
	\label{sigmaR}
	\sigma_1^{x_1-1} \sigma_2^{y_1}= \sigma_1^{x_2} \sigma_2^{y_2-1}=1 \qand 
	R_1^{x_1-1} R_2^{y_1} \equiv R_1^{x_2} R_2^{y_2-1} \equiv 1 \bmod p^m.
	\end{equation}
By statements \eqref{sigma1} and \eqref{nIgualSigmaIgual} of \Cref{Fijando-r}, the equalities on the left side of \eqref{sigmaR} are equivalent to \eqref{B2Sigma}.
	Moreover, $\ord_{p^m}(R_i)=p^{o_i}$ and if $o_1o_2\ne 0$ then $o_2\le o_1$ and $R_2\equiv R_1^{p^{o_1-o_2}} \bmod p^m$, by \Cref{Fijando-r}.\eqref{CondicionesOs} and \eqref{def-erres}. Using this it follows easily that
the congruences on the right side of \eqref{sigmaR} are equivalent to 
	the conditions in \eqref{B2os}.
\end{proof}

In the remainder of the section we assume that $\bb\in \B_r$. The aim now is to describe the connection between the $o'_i(b)$'s and $u_i(b)$'s with the $o'_i(\bb)$'s and $u_i(\bb)$'s.
We first obtain a general description and then we specialize to the three cases depending on the values of $\sigma_1$ and $\sigma_2$.

\begin{lemma}\label{PreCongruencias}
	If $\bb\in \B_r$ then 
	\begin{eqnarray}
	\label{pret1} {t}_1(\overline b) \alpha &\equiv & x_1t_1(b)+y_1p^{n_1-n_2}t_2(b)+ \beta_1 \bmod p^m, \\
	\label{pret2} {t}_2(\overline b) \alpha  &\equiv & x_2t_1(b)p^{n_2-n_1}+y_2t_2(b)+ \beta_2 \bmod p^m.
	\end{eqnarray} 
	where\begin{eqnarray}
	\alpha&=&\Ese{r_1}{x_1}\Ese{r_2}{y_2}r_2^{y_1}-\Ese{r_1}{x_2}\Ese{r_2}{y_1}r_2^{y_2}, \\ 
	\beta_i &=& 
	\Ese{r_1}{x_i} \Ese{r_2}{y_i}\Ese{r_1^{x_i},r_2^{y_i}}{p^{n_i}}  \quad (i=1,2).
	\end{eqnarray}
\end{lemma}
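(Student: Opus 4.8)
The plan is to fix the generator $c=[b_2,b_1]$ of $G'\cong C_{p^m}$, write $\overline b_i=h_ic^{z_i}$ with $h_i=b_1^{x_i}b_2^{y_i}$, and reduce everything to reading off exponents of $c$. Since $\overline b\in\B_r$ and $r(c)=1$ (conjugation by an element of the abelian group $G'$ fixes $G'$ pointwise), we have $r(h_i)=r(\overline b_i)=r_i$; this is the fact that lets me replace the $r$-value of every conjugating element below by a product of powers of $r_1$ and $r_2$.

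First I would compute $[\overline b_2,\overline b_1]$. Using $[xy,z]=[x,z]^y[y,z]$ and $[x,yz]=[x,z][x,y]^z$, and that $G'$ is abelian so the conjugations by $c^{z_1}$ and $c^{z_2}$ act trivially on commutators (which all lie in $G'$), the expression collapses to $[\overline b_2,\overline b_1]=[h_2,c^{z_1}]\,[h_2,h_1]\,[c^{z_2},h_1]$. A direct computation gives $[h_2,c^{z_1}]=c^{z_1(1-r_2)}$ and $[c^{z_2},h_1]=c^{z_2(r_1-1)}$, while expanding $[h_2,h_1]=[b_1^{x_2}b_2^{y_2},b_1^{x_1}b_2^{y_1}]$ with \eqref{CBAConmutadores} and summing the resulting geometric series (using $[b_2,b_1]=c$ and $[b_1,b_2]=c^{-1}$) yields $[h_2,h_1]=c^{\alpha}$ with $\alpha$ exactly as stated. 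Hence $[\overline b_2,\overline b_1]=c^{\alpha'}$ with $\alpha'=\alpha+z_1(1-r_2)+z_2(r_1-1)$.

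Next I would compute $\overline b_i^{p^{n_i}}$. Peeling off the $G'$-factor with \eqref{CBAPotenciaga} gives $\overline b_i^{p^{n_i}}=(b_1^{x_i}b_2^{y_i})^{p^{n_i}}c^{z_i\Ese{r_i}{p^{n_i}}}$, and then \eqref{CBAPotenciagh} together with $b_1^{p^{n_1}}=c^{t_1(b)}$, $b_2^{p^{n_2}}=c^{t_2(b)}$ and $[b_2^{y_i},b_1^{x_i}]=c^{\Ese{r_1}{x_i}\Ese{r_2}{y_i}}$ shows $\overline b_i^{p^{n_i}}=c^{\gamma_i}$, where $\gamma_i$ equals the claimed right-hand side of \eqref{pret1}--\eqref{pret2} plus an extra term $z_i\Ese{r_i}{p^{n_i}}$ (here the factor $b_1^{x_2p^{n_2}}=c^{t_1(b)x_2p^{n_2-n_1}}$ makes sense because $p^{n_1-n_2}\mid x_2$). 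Since $t_i(\overline b)$ is defined by $\overline b_i^{p^{n_i}}=[\overline b_2,\overline b_1]^{t_i(\overline b)}=c^{\alpha't_i(\overline b)}$, equating exponents of the order-$p^m$ element $c$ gives $\alpha't_i(\overline b)\equiv\gamma_i \mod p^m$.

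The crux, and the step I expect to be the real obstacle, is to discard the $z_i$-terms and recover the clean statement with $\alpha$ rather than $\alpha'$. Here I would use that $\overline b$ itself lies in $\B_r$, so that relations \eqref{Relptr}, \eqref{Relpt1r2} and \eqref{Relpt2r1} hold with $b$ replaced by $\overline b$: namely $t_1(\overline b)(1-r_2)\equiv\Ese{r_1}{p^{n_1}}$, $t_2(\overline b)(r_1-1)\equiv\Ese{r_2}{p^{n_2}}$ and $t_i(\overline b)(r_i-1)\equiv0 \mod p^m$. Multiplying $\alpha'$ by $t_i(\overline b)$ and applying these shows $\alpha't_i(\overline b)\equiv\alpha\,t_i(\overline b)+z_i\Ese{r_i}{p^{n_i}} \mod p^m$ for each $i$, the mixed term vanishing by \eqref{Relptr}. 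Comparing with $\alpha't_i(\overline b)\equiv\gamma_i$ and cancelling the common summand $z_i\Ese{r_i}{p^{n_i}}$ gives $t_i(\overline b)\alpha\equiv\gamma_i-z_i\Ese{r_i}{p^{n_i}} \mod p^m$, which is precisely \eqref{pret1} and \eqref{pret2}. Apart from this cancellation the remaining work is bookkeeping: collapsing the double products into the operators $\Ese{\cdot}{\cdot}$ and tracking the signs coming from $[b_1,b_2]=c^{-1}$.
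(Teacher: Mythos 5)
Your proof is correct and follows essentially the same route as the paper's: the same decomposition $\overline b_i=h_ic^{z_i}$, the same computation giving $[\overline b_2,\overline b_1]=c^{\alpha+z_1(1-r_2)+z_2(r_1-1)}$ and $\overline b_i^{p^{n_i}}=c^{\gamma_i}$, and the same key cancellation of the $z_i$-terms via the relations \eqref{Relptr}, \eqref{Relpt1r2}, \eqref{Relpt2r1} applied to $\overline b$ itself. You even correctly isolate that cancellation as the crux, which is exactly the step the paper handles in its final sentence.
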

\begin{proof} 
	Let $a=[b_2,b_1]$ and $\overline{a}=[\overline{b}_1,\overline{b}_2]$.
	Observe that the hypothesis implies that $r(b_i)\equiv r(\overline b_i)\equiv r_i \equiv r_1^{x_i}r_2^{y_i} \bmod p^m$.
	Therefore, by \eqref{CBAConmutadores}, for every $v,w\in \Z$
	\begin{equation}\label{ConmutadorabiPotencias}
	[a^v,b_i^w]=a^{v(r_i^w-1)} 
	\end{equation}
	and 
	\begin{equation}\label{Conmutadorb2b1Potencias}
	[b_2^w,b_1^v]=a^{\Ese{r_1}{v}\Ese{r_2}{w}}.
	\end{equation}
	 By \eqref{CBAConmutadores}, \eqref{ConmutadorabiPotencias} and \eqref{Conmutadorb2b1Potencias}
	$$\overline{a}=a^{\alpha+\gamma},$$
	where $\gamma=z_2(r_1-1) + z_1(1-r_2)$;
	by \eqref{CBAPotenciaga} and \eqref{CBAPotenciagh}
	\begin{equation} \label{ErrorDeMiech}
	\overline{b}_i^{p^{n_i}}=b_1^{x_ip^{n_i}}b_2^{y_ip^{n_i}}a^{ \beta_i +z_i\Ese{r_i}{p^{n_i}}}
	\end{equation} 
	and as $p^{n_1-n_2}\mid x_2$
	\begin{eqnarray*}
		a^{t_1(\bb)(\alpha+\gamma)} = \overline{a}^{t_1(\bb)}=\overline{b}_1^{p^{n_1}}=a^{x_1t_1(b)+y_1p^{n_1-n_2}t_2(b)+ \beta_1+z_1\Ese{r_1}{p^{n_1}}} & \qand  \\
		a^{t_2(\bb)(\alpha+\gamma)} =\overline{a}^{t_2(\bb)}=\overline{b}_2^{p^{n_2}}=
		a^{x_2t_1(b)p^{n_2-n_1}+y_2t_2(b)+ \beta_2+z_2\Ese{r_2}{p^{n_2}}} .&
	\end{eqnarray*} 
	By \eqref{Relptr}, \eqref{Relpt1r2} and \eqref{Relpt2r1} it is clear that $\gamma t_i(\bb) \equiv z_i \Ese{r_i}{p^{n_i}} \bmod p^m$ for $i=1,2$, so the statement follows.
\end{proof}

\begin{remark}\label{ErrorMiech} 
	{\rm An analogue of \Cref{PreCongruencias} was stated within the proof of 
		\cite[Theorem~9]{Miech1975}. 
		Unfortunately, there is a mistake which leads to an incorrect version of condition \eqref{pret2}. Namely, transferring to our notation, the exponent of $a$ in \eqref{ErrorDeMiech} for $i=2$ appears as $\beta_2+ z_2(r_2-1) \Te{r_1,r_2}{p^{n_2}}$ in \cite{Miech1975}, rather than $\beta_2 +z_2 \Ese{r_2}{p^{n_2}}$. As a consequence some groups are missing in \cite{Miech1975}. A minimal example of such situation is formed by the three groups $G$ with $\inv(G)=(3,3,3,2,1,1,2,0,1,2,1,u)$ for  $u\in \{1, 4, 7\}$. Only the group with  $u=1$ appears in \cite{Miech1975}.
}\end{remark}

\begin{lemma} \label{tt++}
	If $\bb\in \B_r$ and $\sigma_1=1$ then 
	\begin{equation}
	\label{t1++}   (x_1y_2-x_2y_1) u_ 1(\bb) p^{m-o_1'(\bb)} \equiv  x_1u_1(b) p^{m-o_1'(b)}+y_1u_2(b)p^{m-o_2'(b)+n_1-n_2}+B_1 \bmod p^m
	\end{equation}
and
	\begin{equation}
	\label{t2++}A+   (x_1y_2-x_2y_1)  u_2(\bb) p^{m-o_2'(\bb)} \equiv  x_2u_1(b)p^{m-o_1'(b)+n_2-n_1}+y_2u_2(b) p^{m-o_2'(b)}+B_2 \bmod p^m,
	\end{equation}
	where  
$$A=\begin{cases}
((x_1-1) y_2-x_2y_1)2^{n_2-1} , &\text{if } p=2 \text{ and } 0<m-n_2=o_1 -o_2; \\
0 ,& \text{otherwise}; \end{cases}$$
and for $i=1,2$
	$$B_i=\begin{cases}
	x_iy_i 2^{n_i-1},& \text{if }p=2 \text{ and }m=n_1;\\
	0, &\text{otherwise}.
	\end{cases}$$
\end{lemma}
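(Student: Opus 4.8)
The plan is to obtain both congruences directly from \Cref{PreCongruencias} by substituting $t_i(b)=u_i(b)p^{m-o'_i(b)}$ and $t_i(\bb)=u_i(\bb)p^{m-o'_i(\bb)}$ into \eqref{pret1} and \eqref{pret2}. Since $\sigma_1=1$ forces $\sigma_2=1$ by \Cref{Fijando-r}.\eqref{sigma1}, we have $v_p(r_i-1)=m-o_i$ for $i=1,2$, and $r_i\equiv 1\mod 4$ when $p=2$. Writing $\delta=\alpha-(x_1y_2-x_2y_1)$ and comparing the substituted versions of \eqref{pret1}--\eqref{pret2} term by term with the target \eqref{t1++}--\eqref{t2++}, the statement is equivalent to the two congruences
\begin{align*}
u_1(\bb)p^{m-o'_1(\bb)}\,\delta &\equiv \beta_1-B_1 \mod p^m,\\
u_2(\bb)p^{m-o'_2(\bb)}\,\delta &\equiv A+\beta_2-B_2 \mod p^m.
\end{align*}
Thus everything reduces to evaluating the ``error'' $\delta$ and the quantities $\beta_i$ modulo the relevant power of $p$.

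For these computations I would use the identities for the operators $\Ese{\cdot}{\cdot}$ and $\Ese{\cdot,\cdot}{\cdot}$ collected in \Cref{Apendice}. Writing $r_i=1+p^{m-o_i}\varepsilon_i$ with $p\nmid\varepsilon_i$, a second-order expansion of the geometric sums gives $\Ese{r_i}{k}\equiv k+\binom{k}{2}p^{m-o_i}\varepsilon_i$ to the needed precision; hence the leading term of $\alpha$ is the determinant $x_1y_2-x_2y_1$, and $\delta$ is an explicit combination of $p^{m-o_1}$ and $p^{m-o_2}$ with integer coefficients built from the $\binom{x_i}{2}$, $\binom{y_i}{2}$ and $x_iy_i$. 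For $\beta_i$ the key input is that $\Ese{r_1^{x_i},r_2^{y_i}}{p^{n_i}}\equiv\binom{p^{n_i}}{2}$ modulo a high power of $p$, so that $\beta_i\equiv x_iy_i\binom{p^{n_i}}{2}$ together with lower-order corrections coming from $r_1^{x_i}-1$ and $r_2^{y_i}-1$.

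The decisive step is the valuation analysis showing that all second-order contributions organise into exactly $A$, $B_1$ and $B_2$. For $p$ odd one has $v_p\!\left(\binom{p^{n_i}}{2}\right)=n_i\ge m$ (using $m\le n_1$ from \Cref{oo'}), so $\beta_i\equiv 0\mod p^m$; combined with the bounds $o'_i(\bb)\le m-o_i$ and $o'_1(\bb)\le m-o_2$ from \Cref{oo'}, the remaining terms either vanish or cancel, giving $A=B_i=0$ as claimed. The genuine difficulty is entirely $2$-adic: when $p=2$ the halving in $\binom{2^{n_i}}{2}$ and in the expansion of $\Ese{r_1^{x_i},r_2^{y_i}}{2^{n_i}}$ lowers valuations by one, so $v_2\!\left(\binom{2^{n_i}}{2}\right)=n_i-1$, and terms that were negligible for odd $p$ now survive modulo $2^m$ in the boundary regimes $m=n_1$ and $m-n_2=o_1-o_2>0$. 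I expect the main obstacle to be precisely this boundary analysis: one must expand $\delta$ and $\beta_i$ to the relevant order, keep track of the off-by-one valuation shifts and of higher terms (such as $2^{2n_i-1}$) that are individually nonzero, and verify — using the precise $\Ese{\cdot}{\cdot}$ and $\Ese{\cdot,\cdot}{\cdot}$ identities of \Cref{Apendice} together with the congruences imposed on the $x_i,y_i$ by $\bb\in\B_r$ (\Cref{LemmaEnB2}) — that the surviving contributions from $\delta$ and from $\beta_i$ combine into exactly the stated $A$, $B_1$ and $B_2$.
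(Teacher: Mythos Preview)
Your reduction via \Cref{PreCongruencias} is correct, but you are making the argument harder than necessary by coupling the $\delta$-contribution with $\beta_i-B_i$: in fact these decouple completely. The paper shows $\beta_i\equiv B_i\bmod p^m$ on its own (directly from \Cref{ValEse2}, which gives $\Ese{r_1^{x_i},r_2^{y_i}}{p^{n_i}}\equiv (p-1)p^{n_i-1}/\gcd(p-1,1)$ type information, together with $p^{n_1-n_2}\mid x_2$), and separately that $t_1(\bb)\delta\equiv 0$ while $t_2(\bb)\delta\equiv A$. For the $\delta$-part the paper does \emph{not} expand $\alpha$ to second order. Instead it uses the relations \eqref{Relptr} and \eqref{Relpt1r2}: since $\sigma_1=1$ forces $n_1\ge m$, one has $t_1(\bb)(r_2-1)\equiv -\Ese{r_1}{p^{n_1}}\equiv 0\bmod p^m$, and together with $t_1(\bb)(r_1-1)\equiv 0$ this collapses $t_1(\bb)\alpha$ to $t_1(\bb)(x_1y_2-x_2y_1)$ at once. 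For $t_2(\bb)$ only $t_2(\bb)r_2\equiv t_2(\bb)$ is available, reducing $t_2(\bb)\alpha$ to $t_2(\bb)(\Ese{r_1}{x_1}y_2-\Ese{r_1}{x_2}y_1)$; the residual $\Ese{r_1}{x_j}$ are then evaluated via \Cref{EseProp}.\eqref{EseCero}, using that \Cref{LemmaEnB2} forces $p^{o_1-o_2}\mid x_1-1,x_2$, and this is exactly where $A$ appears in the boundary case $0<m-n_2=o_1-o_2$.

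Your second-order Taylor expansion route could in principle be pushed through, but as written it is only a plan: you have not actually verified the $2$-adic boundary cases, and your expectation that ``surviving contributions from $\delta$ and $\beta_i$ combine'' is misleading, since no cross-cancellation occurs. The paper's use of the relations \eqref{Relptr}--\eqref{Relpt2r1} to kill factors of $r_j-1$ against $t_i(\bb)$ is the key simplification you are missing; it replaces a delicate multi-term valuation count by a two-line reduction.
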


\begin{proof}
	By \Cref{PreCongruencias}, it is enough to prove the following
	\begin{eqnarray} 
		t_1(\overline b)\alpha &\equiv& t_1(\overline b)(x_1y_2-x_2y_1) \bmod p^m; \label{t1}\\
		t_2 (\overline b)\alpha&\equiv& A + t_2 (\overline b)\left(  x_1y_2-x_2y_1\right)  \bmod p^m, \label{t2} \\
		\beta_i     &\equiv & B_i \bmod p^m \quad (i=1,2).   \label{beta}
	\end{eqnarray}	
	
	By \Cref{Cotasoo'}.\eqref{m>n1} and \Cref{EseProp}.\eqref{ValEse} $v_p(\Ese{r_1}{p^{n_1}})=n_1\geq m$, so	using \eqref{Relptr} and \eqref{Relpt1r2},   it follows that $ {t}_1(\overline b)\alpha\equiv  {t}_1(\overline b)(x_1y_2-x_2y_1)\bmod p^m$ and
	$t_2(\overline b)\alpha \equiv  {t}_2(\overline b)(\Ese{r_1}{x_1}y_2-  \Ese{r_1}{x_2} y_1)\bmod p^m$.
	This proves \eqref{t1} and reduces the proof of \eqref{t2} to prove the following:
	\begin{align*}
	t_2(\bb)\Ese{r_1}{x_1 }\equiv &\begin{cases}
	t_2(\bb)x_1+(x_1-1)2^{n_2 -1}  \bmod 2^{m},  & \text{if }p=2  \text{ and } 0<m -n_2=o_1-o_2 ; \\
	t_2(\bb)x_1 \bmod p^{m }, &\text{otherwise};
	\end{cases}.\\
	t_2(\bb)\Ese{r_1}{x_2}\equiv & \begin{cases}
	t_2(\bb)x_2+x_22^{n_2 -1}  \bmod 2^{m },  & \text{if }p=2  \text{ and }
	0 < m -n_2=o_1-o_2; \\
	t_2(\bb)x_2 \bmod p^{m },&\text{otherwise}.
	\end{cases}
	\end{align*}
	By \Cref{oo'}.\eqref{oo'Sigmas1}, if $m\le n_2$ then $v_p(t_2(\bb))+v_p(r_1-1)=2m-o_1-o'_2(\bb)\ge m$, and hence $t_2(\bb)r_i\equiv r_i\bmod p^m$ and $t_2(\bb)\Ese{r_i}{x_i}\equiv t_2(\bb)x_i \bmod p^m$.
	Otherwise $2m-o_1-o'_2(\bb)=n_2<m$, and since $o'_2(\bb)\le m$, it follows that $o_1\ne 0$.
	Then, by \Cref{oo'}.\eqref{Cotao'} and \Cref{LemmaEnB2}.\eqref{B2os}, $m-n_2=o'_2(\bb)+o_1-m \le o_1-o_2\le \min(v_p(x_1-1),v_p(x_2))$.
	Applying \Cref{EseProp}.\eqref{EseCero} we derive that, for $i=1,2$,
	$$\Ese{r_1}{x_i}\equiv \begin{cases}
	x_i+2^{o'_2(\bb)-1}  \bmod 2^{o'_2(\bb)},  & \text{if } p=2, m-n_2=o_1-o_2 \text{ and } x_i\not\equiv 1 \bmod 2^{m-n_2+1}; \\
	x_i \bmod p^{o'_2(\bb)}, &\text{otherwise}.
	\end{cases}$$
% 	and
% 	$$\Ese{r_1}{x_2}\equiv \begin{cases}
% 	x_2+2^{o'_2(\bb)-1}  \bmod 2^{o'_2(\bb)},  & \text{if } p=2, m-n_2=o_1-o_2 \text{ and }x_2\not\equiv 0 \bmod 2^{m-n_2+1}; \\
% 	x_2 \bmod p^{o'_2(\bb)}, & \text{otherwise}.
% 	\end{cases}$$
	As $o'_2(\bb)=m-v_p(t_2(\bb))$,
% 	$$t_2(\bb)\Ese{r_1}{x_1}\equiv \begin{cases}
% 	t_2(\bb)x_1+2^{m-1}  \bmod 2^m,  & \text{if } p=2,  m-n_2=o_1-o_2 \text{ and }
% 	x_1\not\equiv 1 \bmod 2^{m-n_2+1}; \\
% 	t_2(\bb)x_1 \bmod p^m, & \text{otherwise};
% 	\end{cases}$$
% 	and
	$$t_2(\bb)\Ese{r_1}{x_i}\equiv \begin{cases}
	t_2(\bb)x_i+2^{m-1}  \bmod 2^m,  & \text{if } p=2, m-n_2=o_1-o_2
	\text{ and }x_i\not\equiv 0 \bmod 2^{m-n_2+1}; \\
	t_2(\bb)x_i \bmod p^m, & \text{otherwise}.
	\end{cases}$$
	Having once again in mind that $0<m-n_2\le \min(v_p(x_2),v_p(x_1-1))$ we deduce that in this case
	\begin{align*}
	t_2(\bb)\Ese{r_1}{x_2}\equiv & \begin{cases}
	t_2(\bb)x_2+x_22^{n_2 -1}  \bmod 2^{m},  & \text{if }p=2  \text{ and }m -n_2=o_1-o_2; \\
	t_2(\bb)x_2 \bmod p^{m }&\text{otherwise};
	\end{cases} .\\ 
	t_2(\bb)\Ese{r_1}{x_1 }\equiv &\begin{cases}
	t_2(\bb)x_1+(x_1-1)2^{n_2 -1}  \bmod 2^{m},  & \text{if }p=2  \text{ and }m -n_2=o_1-o_2 ; \\
	t_2(\bb)x_1 \bmod p^{m }&\text{otherwise};
	\end{cases} .
	\end{align*}
	and \eqref{t2} follows.
	
	By  Lemma \ref{ValEse2}
	$$\beta_1\equiv \begin{cases}
	\Ese{r_1}{x_1}\Ese{r_2}{y_1} 2^{n_1-1} \bmod 2^m, & \text{if } p=2; \\
	0 \bmod p^m, & \text{othewise}.
	\end{cases}.$$
	This proves \eqref{beta} for $i=1$ in case $p\ne 2$ or $m\ne n_1$. Otherwise, by \Cref{PropEse}.\eqref{ValEse},  $\beta_1\equiv x_1y_12^{n_1-1} = B_i \bmod 2^m$, as desired.
	Finally, for the proof of \eqref{beta} for $i=2$, recall that $p^{n_1-n_2} \mid x_2$, and hence, by \Cref{EseProp}.\eqref{ValEse}, $p^{n_1-n_2}\mid \Ese{r_1}{x_2}$.
	Moreover, by Lemma \ref{ValEse2}, if $p>2$ then $p^{n_2} \mid \Te{r_1^{x_2},r_2^{y_2}}{p^{n_2}}$, so $\beta_2\equiv 0 \equiv B_2 \bmod p^m$.
	Assume $p=2$. Then by the same lemma  $ \Te{r_1^{x_2},r_2^{y_2}}{2^{n_2}} \equiv 2^{n_2-1}\bmod2^{n_2} $. Thus $ \Ese{r_1}{x_2} \Te{r_1^{x_2}, r_2^{y_2}}{p^{n_2}} \equiv \Ese{r_1}{x_2} 2^{n_2-1} \bmod 2^{n_1}$. As $v_2(x_2)\ge n_1-n_2$, if $n_1>m$ it is clear that this expression vanishes modulo $2^m$ and hence $\beta_2\equiv B_2 \bmod 2^m$.
	Otherwise, $m=n_1$ and $\Ese{r_2}{x_2}2^{n_2-1}\equiv x_22^{n_2-1} \bmod 2^m$. Thus $\beta_2\equiv x_2y_22^{n_2-1}=B_2 \bmod 2^m$. This proves \eqref{beta}.
\end{proof}

\begin{lemma}\label{ttbb-+}
	If $\bb\in \B_r$, $\sigma_1=-1$ and $\sigma_2=1$  then 

	\begin{eqnarray*} 
		t_1(\overline b)\alpha &\equiv& t_1(\overline b)(x_1y_2-x_2y_1) \bmod 2^m;\\
		t_2(\overline b) \alpha& \equiv& \begin{cases}
			t_2(\overline b)(x_1y_2-x_2y_1) \bmod 2^m ,&\text{if }  n_2 \geq m; \\
			t_2(\overline b) y_2 + ((x_1-1) y_2 -x_2y_1) 2^{m-o_1+o_2-1} \bmod 2^m,
					& \text{if }o_2+1=n_2<m \text{ and }o_1>0 ; \\
			t_2(\overline b ) y_2 \bmod 2^m, &\text{otherwise.}
		\end{cases} \\
		\beta_1 &\equiv &  y_1 2^{n_1-1}\bmod 2^m ;  \\
		\beta_2& \equiv& \begin{cases}
			x_2 2^{m-n_1} \bmod 2^m, & \text{if } n_1=o_1+1,  \ o_2=0<o_1, \text{ and } n_2=1;  \\
			0\bmod 2^m,&\text{otherwise.}
		\end{cases}   
	\end{eqnarray*}	
\end{lemma}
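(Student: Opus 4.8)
The plan is to run the same argument as in the proof of \Cref{tt++}, now in the regime $\sigma_1=-1$, $\sigma_2=1$, which forces $p=2$, $m\ge 2$, $r_1\equiv -1\pmod 4$ and $r_2\equiv 1\pmod 4$, with $v_2(r_1+1)=m-o_1$ and $v_2(r_2-1)=m-o_2$. First I would collect the facts that carry the proof: by \Cref{Fijando-r}.\eqref{nIgualSigmaIgual} we have $n_2<n_1$; the sign equalities on the left of \eqref{sigmaR} (the source of \Cref{LemmaEnB2}.\eqref{B2Sigma}) force $x_1$ odd and $x_2$ even, and since $\bb\in\B$ the nondegeneracy $x_1y_2\not\equiv x_2y_1\pmod 2$ then gives $y_2$ odd; by \Cref{oo'}.\eqref{oMenorn} we have $o_i<n_i$; by \Cref{oo'}.\eqref{Cotao'}, $o'_1(\bb)\le 1$ and $u_1(\bb)=1$, so $v_2(t_1(\bb))\ge m-1$; and by \Cref{oo'}.\eqref{oo'SigmasDistintas} either (a) $m\le n_2$, where $v_2(t_2(\bb))\ge m-1$, or (b) $n_2<m$, where $v_2(t_2(\bb))=n_2-1$ and $u_2(\bb)(-1+2^{m-o_1-1})\equiv 1\pmod{2^{m-n_2}}$. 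Finally, since $o_1<n_1$, \Cref{EseProp}.\eqref{ValEse-1} gives $v_2(\Ese{r_1}{2^{n_1}})=n_1+m-o_1-1\ge m$.

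For the two $\alpha$-congruences I would first strip the $r_2$-factors exactly as in \Cref{tt++}. Relation \eqref{Relptr} gives $t_2(\bb)(r_2-1)\equiv 0\pmod{2^m}$, and combining \eqref{Relpt1r2} for $\bb$ with $v_2(\Ese{r_1}{2^{n_1}})\ge m$ gives $t_1(\bb)(r_2-1)\equiv 0\pmod{2^m}$; hence $t_i(\bb)\Ese{r_2}{y}\equiv t_i(\bb)y$ and $t_i(\bb)r_2^{y}\equiv t_i(\bb)$, so $t_i(\bb)\alpha\equiv t_i(\bb)\big(\Ese{r_1}{x_1}y_2-\Ese{r_1}{x_2}y_1\big)\pmod{2^m}$. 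For $i=1$, \eqref{Relptr} also yields $t_1(\bb)(r_1-1)\equiv 0$, so $t_1(\bb)\Ese{r_1}{x}\equiv t_1(\bb)x$ and the first claim follows. For $i=2$ in case (a), $v_2(t_2(\bb))\ge m-1$ gives the same reduction $t_2(\bb)\Ese{r_1}{x}\equiv t_2(\bb)x$, yielding the branch $t_2(\bb)(x_1y_2-x_2y_1)$. In case (b) this reduction is unavailable ($v_2(t_2(\bb))=n_2-1$), so I would instead evaluate $t_2(\bb)\Ese{r_1}{x_1}$ and $t_2(\bb)\Ese{r_1}{x_2}$ modulo $2^m$ through the appendix estimate \Cref{EseProp}.\eqref{EseCero} for $\Ese{s}{x}$ with $s\equiv -1\pmod 4$; the threshold computation $(n_2-1)+(m-o_1)+\big(v_2(x-1)-1\big)$ shows that the extra $2^{m-1}$ produced by \eqref{EseCero} is visible modulo $2^m$ exactly when $n_2=o_2+1$ and $o_1>0$, which is the middle branch, while otherwise the \Cref{LemmaEnB2}.\eqref{B2os}-congruences on $v_2(x_1-1)$ and $v_2(x_2)$ push it past $2^m$ and only $t_2(\bb)y_2$ survives.

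For $\beta_1$ I would invoke \Cref{ValEse2} to replace $\beta_1$ by $\Ese{r_1}{x_1}\Ese{r_2}{y_1}2^{n_1-1}$ modulo $2^m$. Since $x_1$ is odd, $v_2(\Ese{r_1}{x_1}-1)\ge m-o_1\ge m-n_1+1$, and since $o_2<n_1$ we have $\Ese{r_2}{y_1}\equiv y_1\pmod{2^{m-n_1+1}}$; the product is therefore $\equiv y_1\pmod{2^{m-n_1+1}}$, giving $\beta_1\equiv y_1 2^{n_1-1}\pmod{2^m}$ irrespective of the sign of $m-n_1$. For $\beta_2$ I would count valuations in $\beta_2=\Ese{r_1}{x_2}\Ese{r_2}{y_2}\Ese{r_1^{x_2},r_2^{y_2}}{2^{n_2}}$: here $y_2$ odd gives $v_2(\Ese{r_2}{y_2})=0$, the appendix gives $v_2(\Ese{r_1^{x_2},r_2^{y_2}}{2^{n_2}})=n_2-1$, and $x_2$ even gives $v_2(\Ese{r_1}{x_2})=(m-o_1)+v_2(x_2)-1$, so $v_2(\beta_2)=m-o_1+v_2(x_2)+n_2-2\ge m$ unless $n_1=o_1+1$. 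In that case I would feed in the \Cref{LemmaEnB2}.\eqref{B2os} congruences: the sub-cases $o_1=0$ and $0<o_2<o_1$ are excluded or force $v_2(x_2)$ high enough to kill $\beta_2$, leaving only $o_2=0<o_1$ with $n_2=1$, where $\Ese{r_1^{x_2},r_2^{y_2}}{2}=r_2^{y_2}$ is odd and $\Ese{r_1}{x_2}\equiv (1+r_1)\tfrac{x_2}{2}$ identifies the leading term as $x_2 2^{m-n_1}$.

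The main obstacle is the middle branch of the $t_2(\bb)\alpha$-formula: obtaining the exact coefficient $((x_1-1)y_2-x_2y_1)2^{m-o_1+o_2-1}$ and not merely an odd multiple of it. This is precisely where the congruence $u_2(\bb)(-1+2^{m-o_1-1})\equiv 1\pmod{2^{m-n_2}}$ of \Cref{oo'}.\eqref{oo'SigmasDistintas} must be used, to collapse to $1$ the odd unit accumulated from $u_2(\bb)$, from the odd part of $r_1+1$, and from the $\Ese{r_1^2}{\cdot}$ appearing in the even/odd splitting of $\Ese{r_1}{x_i}$. Keeping track of exactly which power of $2$ remains visible modulo $2^m$ as $v_2(x_1-1)$ and $v_2(x_2)$ range over the values permitted by \Cref{LemmaEnB2}.\eqref{B2os} is the delicate bookkeeping underlying both this branch and the vanishing statements for $\beta_2$.
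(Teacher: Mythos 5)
Your reduction of the two $\alpha$-congruences and your valuation count for $\beta_2$ do follow the paper's route, but your treatment of $\beta_1$ has a genuine gap. You invoke \Cref{ValEse2} to replace $\Ese{r_1^{x_1},r_2^{y_1}}{2^{n_1}}$ by $2^{n_1-1}$ modulo $2^m$, but \Cref{ValEse2} only gives this congruence modulo $2^{n_1}$: it says $\Ese{r_1^{x_1},r_2^{y_1}}{2^{n_1}}=2^{n_1-1}+k2^{n_1}$ for an unknown integer $k$. In the regime $\sigma_1=-1$ the inequality $m>n_1$ is perfectly possible (by \Cref{Cotasoo'}.\eqref{m>n1} it is in fact the only regime where it can happen), and then the error term $\Ese{r_1}{x_1}\Ese{r_2}{y_1}\,k\,2^{n_1}$ need not vanish modulo $2^m$: its $2$-adic valuation is $v_2(y_1k)+n_1$, and $y_1$ may well be odd. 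So your conclusion $\beta_1\equiv y_12^{n_1-1}\bmod 2^m$ is unjustified exactly in the cases this lemma exists to cover. The intermediate claim is actually true, but the paper needs a separate, strictly stronger statement to get it, namely \Cref{ValEse2Rebuscado} (proved by a double induction): for $s_1\equiv -1\bmod 4$ and $s_2$ odd, $\Ese{s_1,s_2}{2^n}\equiv 2^{n-1}\bmod 2^m$ provided $\max(0,m-v_2(s_1+1))\le n-1$ and $\max(0,m-v_2(s_2-1))\le n-1$; these hypotheses hold for $s_1=r_1^{x_1}$, $s_2=r_2^{y_1}$, $n=n_1$ precisely because $o_1\le n_1-1$ and $o_2\le n_2-1$ by \Cref{oo'}.\eqref{oMenorn}. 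Without this input (or an equivalent one) the $\beta_1$ congruence cannot be deduced.

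Two smaller problems. For the middle branch of the $t_2(\bb)\alpha$ formula you cite \Cref{EseProp}.\eqref{EseCero}, but that statement requires $n$ or $n-1$ to be divisible by $2^{m-a}$, which is not available for $x_1$ or $x_2$; what actually works (and what your ``threshold computation'' amounts to) is the valuation formula of \Cref{EseProp}.\eqref{ValEse-1} applied to $\Ese{r_1}{x_1-1}$ and $\Ese{r_1}{x_2}$ together with the divisibility constraints of \Cref{LemmaEnB2}, as in the paper. Finally, your closing paragraph misidentifies the difficulty: no unit-tracking via the congruence $u_2(\bb)(-1+2^{m-o_1-1})\equiv 1 \bmod 2^{m-n_2}$ is needed, because every correction term in this branch has $2$-adic valuation at least $m-1$, and modulo $2^m$ any such term equals $0$ or $2^{m-1}$ regardless of odd factors; this is exactly why the coefficient $((x_1-1)y_2-x_2y_1)2^{m-o_1+o_2-1}$ in the statement can be read off from valuations alone, and the paper never uses that congruence in this proof.
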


\begin{proof}
Since $\sigma_1=-1$ and $\sigma_2=1$, it follows that $p=2$, $n_1>n_2$, $o'_1(\bb)\le
1$ and $u_1(\bb)=1$ by \Cref{Fijando-r} and \Cref{oo'}.\eqref{Cotao'}.
Moreover, $2^{n_1-n_2}\mid x_2$ and $2\nmid x_1y_2$ by \Cref{LemmaEnB2}, and $t_i(\bb)r_i \equiv  t_i(\bb)\bmod 2^m$, by \eqref{Relptr}.
By \eqref{Relpt1r2}, $t_1(\bb)(r_2-1)\equiv \Ese{r_1}{2^{n_1}} \bmod 2^m$ and, by \Cref{EseProp}.\eqref{ValEse-1} and \Cref{oo'}.\eqref{CotaOs}, $v_2(\Ese{r_1}{2^{n_1}})=n_1+m-o_1-1\ge m$.
So the first congruence follows.

Moreover $t_2(\overline b) \alpha \equiv t_2(\overline b) ( \Ese{r_1}{x_1} y_2-\Ese{r_1}{x_2} y_1) \bmod 2^m$. If $n_2\geq m$ then, by \eqref{Relpt2r1} and \Cref{EseProp}.\eqref{ValEse},
$t_2(\overline b) (r_1-1) \equiv  \Ese{r_2}{2^{n_2}} \equiv 0 \bmod 2^m$, 	so $t_2(\overline b) \alpha \equiv t_2(\overline b) ( x_1 y_2-x_2 y_1) \bmod 2^m$.
Otherwise, $m>n_2=m-o_2'(\overline b) +1$, by \Cref{oo'}.\eqref{oo'SigmasDistintas}.
If $o_1=0$ then $r_1\equiv -1 \bmod 2^m$, so   $\Ese{r_1}{x_1}\equiv 1 \bmod 2^m$ and $\Ese{r_1}{x_2} \equiv  0 \bmod 2^m$, since $2\nmid x_1$ and $2\mid x_2$.
Hence $t_2(\overline b)\alpha \equiv t_2(\overline b)  y_2 \bmod  2^m$.
Assume that $o_1\neq 0 $.
Then $o_1>o_2$ by \Cref{Fijando-r}.\eqref{CondicionesOs}, and $x_1-1\equiv
x_2\equiv 0 \bmod 2^{o_1-o_2 }$ by \Cref{LemmaEnB2}.
So
	$$v_2(t_2(\overline b)\Ese{r_1}{x_2})= m-o_2'(\overline b) + v_2(r_1+1) -1 +v_2(x_2)= 2m-o_2'(\overline b)-o_1-1+v_2(x_2) \geq 2m-o_2'(\overline b)-o_2-1 \geq m -1,$$
by \Cref{oo'}.\eqref{Cotao'}.
Moreover, $v_2(t_2(\overline b)\Ese{r_1}{x_2})=m-1$ if and only if $v_2(x_2)=o_1-o_2 $ and $m-o_2= o_2'(\overline b)$ if and only if $v_2(x_2)=o_1-o_2$ and $n_2=o_2+1$ (because $m-o_2'(\overline b) +1 =n_2$).
Therefore, if $o_1\neq 0$ then
	$$t_2(\overline b) \Ese{r_1}{x_2} \equiv \begin{cases} 
	x_2 2^{ m+o_2-o_1-1} \bmod 2^m , & \text{if }
	o_2+1=n_2<m ;  \\
	0, &\text{otherwise}.
	\end{cases} $$ 
Similar arguments yield
	$$t_2(\overline b) \Ese{r_1}{x_1}=  t_2(\overline b)  + t_2(\overline b) r_1\Ese{r_1}{x_1-1} \equiv  \begin{cases} 
	t_2(\overline b) +(x_1-1) 2^{ m+o_2-o_1-1} \bmod 2^m , & \text{if } o_2+1=n_2<m;  \\
	t_2(\overline b) \bmod 2^m , &\text{otherwise.}
	\end{cases} $$
This proves the second congruence of the lemma.
	
As $x_1-1$ is even, by \Cref{EseProp}.\eqref{ValEse-1}, \Cref{ValEse2} and \Cref{oo'}.\eqref{CotaOs}
		$$v_2(\Ese{r_1}{x_1-1}\Te{r_1^{x_1}, r_2^{y_1}}{2^{n_1}})=v_2(x_1-1) +m-o_1-1 +n_1-1\ge m-o_1-1+n_1\geq m.$$
By \Cref{PropEse}, $y_1+(r_2-1)\Te{r_2,1}{y_1} = \Ese{r_2}{y_1}$, and by \Cref{ValEse2},  	$$v_2((r_2-1) \Te{r_1^{x_1},r_2^{y_1}}{2^{n_1}})=m-o_2 +n_1-1\geq m-o_2+n_2-1\geq m.$$
Thus, using \Cref{PropEse}.\eqref{EseCon1} and \Cref{ValEse2Rebuscado} we conclude that
	\begin{align*}
	\beta_1&= \Ese{r_1}{x_1}\Ese{r_2}{y_1} \Te{r_1^{x_1}, r_2^{y_1}}{2^{n_1}}   =  (1+ r_1\Ese{r_1}{x_1-1})\Ese{r_2}{y_1} \Te{r_1^{x_1}, r_2^{y_1}}{2^{n_1}}\\
	&\equiv \Ese{r_2}{y_1} \Te{r_1^{x_1}, r_2^{y_1}}{2^{n_1}}
	= (y_1 +(r_2-1) \Te{r_2,1}{y_1})  \Te{r_1^{x_1}, r_2^{y_1}}{2^{n_1}}
	\equiv y_1 \Te{r_1^{x_1}, r_2^{y_1}}{2^{n_1}} \\
	&\equiv y_12^{n_1-1} \bmod 2^m.
	\end{align*}
	This proves the third congruence. 
	
\Cref{PropEse} and \Cref{oo'}.\eqref{CotaOs} yield
	$$v_2( \beta_2)=v_2(\Ese{r_1}{x_2}\Ese{r_2}{y_2} \Te{r_1^{x_2}, r_2^{y_2}}{2^{n_2}})=  v_2(r_1+1) -1 +v_2(x_2)   +n_2-1 \geq m-o_1 -1+n_1-1 \geq m-1.$$
Therefore $v_2(\beta_2)=m-1$ if and only if $ n_1=o_1+1$ and $v_2(x_2)=n_1-n_2$.
In that case $o_1>0$, since $n_1>n_2$; and $o_2+n_1-n_2=o_2+o_1-n_2+1\le o_1$, by \Cref{oo'}.\eqref{CotaOs}.
Using \Cref{Fijando-r}.\eqref{CondicionesOs} we deduce that if $v_2(\beta_2)=m-1$ then $o_2=0<o_1$, and  so $2^{o_1} \mid x_2$, by \Cref{LemmaEnB2}.
Then $o_1+1-n_2=n_1-n_2=v_2(x_2) \geq o_1$, which implies $n_2=1$.
This yields the last congruence of the lemma.
\end{proof}

\begin{lemma} \label{tt-+}
Suppose that $\bb\in \B_r$, $\sigma_1=-1$ and $\sigma_2=1$.
\begin{enumerate}
	\item If $m\le n_2$ then
	\begin{itemize}
	 \item $o'_1(\bb)=o'_1(b)\le 1$;
	 \item $o'_2(\bb),o'_2(b)\le 1$, and
	 \item $o'_2(\bb)\ne o'_2(b)$ if and only if $v_2(x_2)=n_1-n_2$ and $o'_1(b)=1$.
	\end{itemize}
	\item If $m>n_2$ then
	\begin{itemize}
	 \item $o'_1(b),o'_1(\bb)\le 1$;
	 \item $o'_1(b)\ne o'_1(\bb)$ if and only if $2\nmid y_1$ and $n_1=o_1+1$, and
	 \item the following congruence holds
	\begin{equation}
	(u_2(\bb) -u_2(b))2^{n_2-1}+A \equiv  x_2  2^{m-o_1'(b)+n_2-n_1}   +B \bmod 2^m  \label{t2-+n2<}
	\end{equation}
	where
	$$A=\begin{cases}
	((x_1-1)y_2-x_2y_1) 2^{m-o_1+o_2-1},&\text{if } o_2+1=n_2\text{ and }o_1>0; \\
	0,&\text{otherwise};
	\end{cases}$$
	and
	$$B  = \begin{cases}
	x_2 2^{m-n_1}, & \text{if } n_1=o_1+1,  \ o_2=0<o_1, \text{ and } n_2=1;  \\
	0,&\text{otherwise}.
	\end{cases} $$
	\end{itemize}
\end{enumerate}
\end{lemma}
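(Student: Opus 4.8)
The plan is to extract from \Cref{PreCongruencias} the two congruences \eqref{pret1} and \eqref{pret2} relating $t_i(\bb)$ to $t_i(b)$, feed in the evaluations of $\alpha$, $\beta_1$ and $\beta_2$ supplied by \Cref{ttbb-+}, and then read off the orders and units by a $2$-adic valuation analysis after substituting $t_i(b)=u_i(b)\,2^{m-o_i'(b)}$ and $t_i(\bb)=u_i(\bb)\,2^{m-o_i'(\bb)}$. Several bounds are immediate: since $\bb\in\B_r$ we have $\sigma(\bb_i)=\sigma_i$ and $o(\bb_i)=o_i$, so from $\sigma_1=-1$ and \Cref{oo'}.\eqref{Cotao'} we get $o_1'(b),o_1'(\bb)\le 1$ and $u_1(b)=u_1(\bb)=1$ in both parts; and when $m\le n_2$ the hypothesis $\sigma_2=1$ together with \Cref{oo'}.\eqref{oo'SigmasDistintas} gives $o_2'(b),o_2'(\bb)\le 1$. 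Throughout I will use that $x_1,y_2$ are odd while $x_2$ is even with $v_2(x_2)\ge n_1-n_2$, and that $x_1y_2-x_2y_1$ is odd, all coming from \Cref{LemmaEnB2}.\eqref{EnB0EnB2}.

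For part (1) I would first treat $o_1'$. In \eqref{pret1}, using $n_1-n_2\ge 1$ and $o_2'(b)\le 1$, the term $y_1 2^{n_1-n_2}t_2(b)$ has $2$-adic valuation $\ge m$ and so vanishes; together with $\beta_1\equiv y_12^{n_1-1}\equiv 0$ (as $n_1-1\ge m$), the relation collapses to $2^{m-o_1'(\bb)}(\text{odd})\equiv x_12^{m-o_1'(b)}\mod 2^m$ with $x_1$ odd, forcing $o_1'(\bb)=o_1'(b)$. For $o_2'$ I would compare, in \eqref{pret2}, the valuations of the two surviving terms $x_2t_1(b)2^{n_2-n_1}$ and $y_2t_2(b)$, namely $(v_2(x_2)-(n_1-n_2))+m-o_1'(b)$ and $m-o_2'(b)$; both are at least $m-1$ because $o_i'\le 1$. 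The order $o_2'(\bb)$ equals $m$ minus the valuation of their sum, so $o_2'(\bb)\ne o_2'(b)$ precisely when the first term attains the extreme valuation $m-1$, i.e. when $v_2(x_2)=n_1-n_2$ and $o_1'(b)=1$. The one delicate point here is the case in which both terms have valuation exactly $m-1$: then their odd parts sum to an even number, the valuation jumps to $\ge m$, and one checks this still lands in the claimed equivalence ($o_2'(b)=1$ is then forced down to $o_2'(\bb)=0$).

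For part (2), $o_2'=m+1-n_2$ is already pinned down by \Cref{Cotasoo'}.\eqref{sigmasDistintosm<}, so only $u_2$ can move and $t_2=u_22^{n_2-1}$. The core computation is again \eqref{pret1}: after substitution the two correction terms combine into $y_1(u_2(b)+1)2^{n_1-1}$, and since $u_2(b)$ is odd this has valuation $v_2(y_1)+(n_1-1)+v_2(u_2(b)+1)$. The crucial input is the exact value $v_2(u_2(b)+1)=m-o_1-1$, which I would extract from the constraint $u_2(b)(-1+2^{m-o_1-1})\equiv 1\mod 2^{m-n_2}$ of \Cref{oo'}.\eqref{oo'SigmasDistintas} (inverting $-1+2^{m-o_1-1}$ shows $u_2(b)\equiv -1-2^{m-o_1-1}$); when $o_1<n_2$ one checks the correction valuation always exceeds $m$ and $o_1'$ cannot change, consistent with $n_1\ne o_1+1$. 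A short valuation count then yields $o_1'(\bb)\ne o_1'(b)$ iff $v_2(y_1)=0$ and $m-o_1-1=m-n_1$, i.e. $2\nmid y_1$ and $n_1=o_1+1$. Finally \eqref{t2-+n2<} is the rearrangement of \eqref{pret2}, replacing $t_2(\bb)\alpha$ and $\beta_2$ by the values from \Cref{ttbb-+}, substituting $t_2=u_22^{n_2-1}$ and $t_1(b)=2^{m-o_1'(b)}$, and normalizing away the odd factor $y_2$.

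The hard part will be the $2$-adic valuation bookkeeping rather than any conceptual difficulty: tracking the equal-valuation cancellations (the jump from $m-1$ to $\ge m$) in part (1), and computing $v_2(u_2(b)+1)$ exactly from the modular constraint in part (2), including the subtlety that the range $1\le u_2(b)\le 2^{m-n_2+1}$ admits two residues mod $2^{m-n_2}$ and so must be handled with care when $o_1<n_2$.
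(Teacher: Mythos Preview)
Your plan is correct and follows essentially the same route as the paper: both parts combine \Cref{PreCongruencias} with the evaluations of $\alpha$, $\beta_1$, $\beta_2$ in \Cref{ttbb-+}, then track $2$-adic valuations. One small imprecision: you claim ``the exact value $v_2(u_2(b)+1)=m-o_1-1$'', but the congruence $u_2(b)(-1+2^{m-o_1-1})\equiv 1\pmod{2^{m-n_2}}$ only gives $u_2(b)+1\equiv u_2(b)\,2^{m-o_1-1}\pmod{2^{m-n_2}}$, so one gets the exact value only when $m-o_1-1<m-n_2$ (i.e.\ $o_1\ge n_2$), and merely $v_2(u_2(b)+1)\ge m-n_2$ otherwise --- the paper phrases this as $v_2(u_2(b)+1)\ge\min(m-n_2,m-o_1-1)$. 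You do handle the $o_1<n_2$ branch correctly, so the argument survives. For the ``normalizing away $y_2$'' step at the end, note that it works because every term in the congruence ($A$, $B$, $x_2\,2^{m-o_1'(b)+n_2-n_1}$, and $(u_2(\bb)-u_2(b))2^{n_2-1}$, the last via $u_2(\bb)\equiv u_2(b)\pmod{2^{m-n_2}}$) has $2$-adic valuation at least $m-1$, so multiplication by the odd unit $y_2^{-1}$ leaves each unchanged modulo $2^m$.
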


\begin{proof}
By \Cref{Fijando-r}, $p=2$, $n_1>n_2$ and hence, by \Cref{LemmaEnB2}.\eqref{EnB0EnB2}, $2\mid x_2$ and $2\nmid x_1y_2$.
Moreover, by \Cref{oo'}.\eqref{Cotao'}, $o'_1(b),o'_1(\bb)\le 1$ and $u_1(b)=u_1(\bb)=1$, so $t_1(b)=2^{m-o'_1(b)}$ and $t_{1}(\bb)=2^{m-o'_{1}(\bb)}$.
	
(1) Suppose first that $m\le n_2$.
Then $n_1>n_2\ge m>o_1$, and hence, by \Cref{ttbb-+}, $\beta_1\equiv \beta_2\equiv 0 \bmod 2^m$.
By   \eqref{Relpt2r1} and \Cref{PropEse}.\eqref{ValEse} also
	$t_2(\bb)(r_1-1)\equiv \Ese{r_2}{p^{n_2}}\equiv 0 \bmod p^m$,  therefore
	$m\le v_2(t_2(\bb))+v_2(r_1-1)=m-o'_2(\bb)+1$, and consequently $o'_2(\bb)\le 1$, $t_2(\bb)=p^{m-o'_2(\bb)}$ and $t_2(b) 2^{n_1-n_2}  \equiv 0 \bmod 2^m$.
	Then \Cref{PreCongruencias} and \Cref{ttbb-+} imply $o'_1(\bb)=o'_1(b)$ and $2^{m-o'_2(\bb)}\equiv x_2 2^{m-o'_1(b)+n_2-n_1}+2^{m-o'_2(b)} \bmod 2^m$.
	As $o'_2(\bb),o'_2(b)\le 1$ and $2^{n_1-n_2}\mid x_2$, it follows that $o'_2(\bb)\ne o'_2(b)$ if and only if $v_2(x_2)=n_1-n_2+o'_1(b)-1$  if and only if $v_2(x_2)=n_1-n_2$ and $o'_1(b)=1$.

(2) Suppose now that $m>n_2$. 
Then $o_2'(\bb)=o_2'(b)=m-n_2+1>1$, by \Cref{oo'}.\eqref{oo'SigmasDistintas}.  Thus \Cref{ttbb-+} yields that  \eqref{pret2}  takes the form of \eqref{t2-+n2<}.

We claim that
\begin{equation} \label{B1-+<n2}
y_1t_2(b) 2^{n_1-n_2} +y_12^{n_1-1}  \equiv \begin{cases}
y_1 2^{m-1}\bmod 2^m,&\text{if } n_1=o_1+1;\\
0\bmod 2^m,& \text{otherwise}.
\end{cases}
\end{equation}
Indeed, $y_1 t_2(b) 2^{n_1-n_2} +y_12^{n_1-1}\equiv y_12^{n_1-1} (u_2(b)  +1) \bmod 2^m $ and, by \Cref{oo'}.\eqref{oo'SigmasDistintas}, $u_2(b) (-1 +2^{m-o_1-1}) \bmod 2^{m-n_2}$. Thus  $v_2(u_2(b)+1)\geq \min(m-n_2, m-o_1-1)$.
Moreover $n_1>n_2$, and therefore $n_1-1+m-n_2\geq m$ and $n_1-1+m-o_1-1\geq m-1$. Hence, from \Cref{oo'}.\eqref{oMenorn}, it follows that  $n_1-1+v_2( u_2(b)+1)=m-1$ if and only if $n_1=o_1+1$. Therefore \eqref{B1-+<n2} follows.
 
By \eqref{pret1} and \Cref{ttbb-+}, and having in mind that $o'_1(b),o'_1(\bb)\le 1$, $2\nmid x_1y_2$ and $2\mid x_2$, we deduce that the expression in \eqref{B1-+<n2} is congruent modulo $2^m$ to $2^{m-o'_1(\bb)}-2^{m-o'_1(b)}$.
Hence, $o'_1(b)\ne o'_1(\bb)$ if and only if $n_1=o_1+1$ and $2\nmid y_1$, as desired. 
\end{proof}

\begin{lemma}\label{tt--}
If $\bb\in \B_r$ and $\sigma_2=-1$ then 
	\begin{eqnarray}\label{t1--}
	2^{m-o_1'(\bb)} &\equiv & x_1 2^{m-o_1'(b)} +y_1 2^{m-o_2'(b)+n_1-n_2}   
\bmod 2^m, \\
	2^{m-o_2'(\bb)} & \equiv & x_2  2^{m-o_1'(b)+n_2-n_1} +y_2 2^{m-o_2'(b)}+ B\bmod 2^m, \label{t2--}
	\end{eqnarray}
	where $B$ is as in \Cref{tt-+}.
\end{lemma}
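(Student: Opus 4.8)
The plan is to obtain both congruences from \Cref{PreCongruencias} after evaluating $t_i(\bb)\alpha$, $\beta_1$ and $\beta_2$. First I would record the standing reductions. Since $\sigma_2=-1$, \Cref{Fijando-r}.\eqref{sigma1} forces $\sigma_1=-1$, so $p=2$, $m\ge 2$ and $r_1\equiv r_2\equiv -1\mod 4$ with $v_2(r_i+1)=m-o_i$. Applying \Cref{oo'}.\eqref{Cotao'} to both $b$ and $\bb$ gives $o'_i(b),o'_i(\bb)\le 1$ and $u_i(b)=u_i(\bb)=1$, whence $t_i(b)=2^{m-o'_i(b)}$, $t_i(\bb)=2^{m-o'_i(\bb)}$ and in particular $v_2(t_i(\bb))\ge m-1$. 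Moreover \Cref{LemmaEnB2} supplies $2^{n_1-n_2}\mid x_2$, $x_1y_2\not\equiv x_2y_1$, $x_1\not\equiv y_1$ and $x_2\not\equiv y_2\mod 2$, and I will use $o_i\le n_i-1$ from \Cref{oo'}.\eqref{oMenorn}. With these in hand, \eqref{t1--} and \eqref{t2--} reduce to the three claims $t_i(\bb)\alpha\equiv t_i(\bb)$, $\beta_1\equiv 0$ and $\beta_2\equiv B\pmod{2^m}$.

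For the first claim, reducing $\alpha$ modulo $2$ and using $r_2\equiv 1$ and $\Ese{r_j}{z}\equiv z\pmod 2$ gives $\alpha\equiv x_1y_2-x_2y_1\equiv 1\pmod 2$ by \Cref{LemmaEnB2}.\eqref{EnB0EnB2}; hence $\alpha-1$ is even and $v_2\bigl(t_i(\bb)(\alpha-1)\bigr)\ge(m-o'_i(\bb))+1\ge m$, so $t_i(\bb)\alpha\equiv 2^{m-o'_i(\bb)}\pmod{2^m}$. This is the clean feature of the $\sigma_1=\sigma_2=-1$ case: as $o'_i(\bb)\le 1$, only $\alpha\bmod 2$ is needed, in contrast with \Cref{ttbb-+}.

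For the two $\beta$'s I would use that $v_2(\Ese{r_j}{z})=0$ when $z$ is odd and $=(m-o_j)+v_2(z)-1$ when $z$ is even (\Cref{EseProp}.\eqref{ValEse-1}), together with $v_2\bigl(\Ese{r_1^{x_i},r_2^{y_i}}{2^{n_i}}\bigr)=n_i-1$ (\Cref{ValEse2}). For $\beta_1$, exactly one of $x_1,y_1$ is even, with base $r_j$ say; then $v_2(\beta_1)\ge(m-o_j)+(n_1-1)\ge m$ because $o_j\le n_j-1\le n_1-1$, so $\beta_1\equiv 0$. For $\beta_2$, the parity $x_2\not\equiv y_2$ again isolates one even exponent: if $x_2$ is odd (forcing $n_1=n_2$) the same estimate gives $v_2(\beta_2)\ge m$, whereas if $x_2$ is even and $y_2$ odd then $v_2(\beta_2)=(m-o_1)+v_2(x_2)+(n_2-1)-1\ge m-o_1+n_1-2$, which is $\ge m$ unless $o_1=n_1-1$ and $v_2(x_2)=n_1-n_2$, in which case $v_2(\beta_2)=m-1$.

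The main obstacle is identifying this exceptional case with $B$. I would feed $o_1=n_1-1$ and $x_2$ even into \Cref{Fijando-r}.\eqref{CondicionesOs}: case \eqref{OIgual} gives $o_2=o_1=n_1-1$, hence $n_1=n_2$ and $v_2(x_2)=0$, contradicting $x_2$ even; case \eqref{OsNo0} gives $o_1<o_2+n_1-n_2$, i.e. $o_2>n_2-1$, contradicting $o_2\le n_2-1$; case \eqref{O10} gives $n_1=1$, again forcing $x_2$ odd. So only \eqref{O20}, namely $o_2=0<o_1$, survives, and then $2^{o_1}\mid x_2$ together with $v_2(x_2)=n_1-n_2$ forces $n_2=1$; these are exactly the conditions defining $B\ne 0$ in \Cref{tt-+}. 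In this situation $\Ese{r_1^{x_2},r_2^{y_2}}{2}=r_2^{y_2}$ is odd and $v_2(\Ese{r_1}{x_2})=m-1$, so $\beta_2\equiv 2^{m-1}\pmod{2^m}$, while $v_2(x_2)=n_1-1$ gives $x_2\,2^{m-n_1}\equiv 2^{m-1}\pmod{2^m}$; thus $\beta_2\equiv B$. Substituting the three claims into \Cref{PreCongruencias} and writing $t_i(b)=2^{m-o'_i(b)}$, $t_i(\bb)=2^{m-o'_i(\bb)}$ yields \eqref{t1--} and \eqref{t2--}.
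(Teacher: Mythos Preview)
Your proof is correct and follows essentially the same route as the paper: reduce to \Cref{PreCongruencias}, use $o'_i(\bb)\le 1$ to kill $t_i(\bb)(\alpha-1)$ (the paper phrases this as $t_i(\bb)(r_j-1)\equiv 0$, but it amounts to your observation that $\alpha$ is odd), and estimate $v_2(\beta_i)$ via \Cref{EseProp}.\eqref{ValEse-1} and \Cref{ValEse2}, with the same case elimination through \Cref{Fijando-r}.\eqref{CondicionesOs} to pin down the exceptional $\beta_2$. Your identification of the exceptional case with $B\not\equiv 0$ is complete, since under the three $B$-conditions $2^{o_1}\mid x_2$ forces $v_2(x_2)\ge n_1-1$, so $B\equiv 0$ whenever $v_2(x_2)>n_1-1$.
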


\begin{proof}
Since $\sigma_2=-1$, by \Cref{Fijando-r}, \Cref{oo'}.\eqref{Cotao'} and 
\Cref{LemmaEnB2}, $p=2$, $\sigma_1=-1$, $o'_i(b),o'_i(\bb) \leq 1$, $u_i(b)=u_i(\bb)=1$ and $x_1\equiv y_2 \not\equiv x_2\equiv y_1\bmod 2$.
Then $t_i(\bb)(r_i-1)\equiv t_1(\bb)(r_2-1)\equiv t_2(\bb)(r_1-1) \equiv 0\bmod 2^m$, so
$t_i (\bb)\alpha\equiv (x_1y_2-x_2y_1)2^{m-o'_i(\bb)} \equiv 2^{m-o'_i(\bb)}\bmod 2^m$ and $t_i(b)=2^{m-o'_i(b)}$. Thus, by \Cref{PreCongruencias} it suffices to prove that $\beta_1\equiv 0 \mod 2^m$ and $\beta_2\equiv B \mod 2^m$.

By \Cref{EseProp}.\eqref{ValEse-1} and \Cref{ValEse2}
$$v_2(\beta_1)
\equiv \begin{cases} 
v_2(r_1+1) +v_2(x_1) +n_1-2=m-o_1 + v_2(x_1) +n_1-2 \ge m+v_2(x_1)-1, & 
\text{if } 2\mid x_1; \\ 
v_2(r_2+1) +v_2(y_1) +n_1-2=m-o_2 + v_2(y_1) +n_1-2 \ge m+v_2(y_1)-1, & 
\text{otherwise}.
\end{cases}$$
In both cases $v_2( \beta_1) \geq m$, so that $\beta_1\equiv 0 \bmod 2^m$.

Arguing similarly, in combination with \Cref{LemmaEnB2}, we obtain
$$v_2(\beta_2)
\equiv \begin{cases} 
m-o_1 + v_2(x_2) +n_2-2\ge m-o_1+n_1-2\ge m-1, & \text{if } 2\mid x_2; \\ 

m-o_2 + v_2(y_2) +n_2-2\ge m+v_2(y_2)-1\ge m, & \text{otherwise}.
\end{cases}$$
Thus either $\beta_2\equiv 0\bmod 2^m$ or $\beta_2\equiv 2^{m-1} \bmod 2^m$, $v_2(x_2)=n_1-n_2>0$ and $n_1=o_1+1$.
In the latter case $o_2<o_2+n_1-n_2=o_2+o_1+1-n_2\le o_1$, by \Cref{Cotasoo'}.\eqref{CotaOs}. Hence $o_2=0<o_1$, by \Cref{Fijando-r}.\eqref{CondicionesOs}, and $o_1+1-n_2=n_1-n_2=v_2(x_2)\ge o_1$, by \Cref{LemmaEnB2}, so that $n_2=1$. Thus $\beta_2 \equiv 2^{m-1} \bmod 2^m$ if and only if $n_1=o_1+1$, $o_2=0<o_1=v_2(x_2)$ and $n_2=1$. This proves that $\beta_2\equiv B\bmod 2^m$.
\end{proof}

\section{Description of $\B_r'$}\label{SectionBr'}

The aim of this section is to describe the set $\B_r'$.
Recall that $\B'_r$ is formed by the bases $b$ in $\B_r$ such that $o'(b)=(o'_1,o'_2)$, or equivalently $(|b_1|,|b_2|)\ge_{\lex} (|\bb_1|,|\bb_2|)$ for every $\bb\in\B_r$.

All throughout the section $b=(b_1,b_2)$ is a fixed element of $\B_r$ and the goal   is to obtain necessary and sufficient conditions for $b\in \B_r'$ in terms of conditions on the entries of $o'(b)=(o'_1(b),o'_2(b))$  without using the value of $o'_1$ and $o'_2$. The arguments in the proof of \Cref{OPrima++}, \Cref{OPrima-+}, and \Cref{OPrima--} are repetitive, so with the aim of avoiding boring repetitions we explain now the structure of the proofs. In all the cases we have to prove that $b\in \B_r'$ if and only $o'(b)$ satisfies certain conditions.
We also have a generic element $\bb=(b_1^{x_1}b_2^{y_1}[b_2,b_1]^{z_1},b_1^{x_2}b_2^{y_2}[b_2,b_1]^{z_2})$ in $\B_r$ and we compare $o'(b)$ and $o'(\bb)$ using the results of the previous section.
For the direct implication we assume that $b\in \B_r'$ and select several $\bb\in \B_r$ to deduce the conditions on $o'(b)$ from the inequality $o'(b)\ge_{\lex}o'(\bb)$. For the reverse implication we assume that $b\not\in \B_r'$ and  $\bb\in\B_r'$ and deduce from $o'(b)<o'(\bb)$ that 
$o'(b)$ does not satisfy the given conditions. In both implications it is important to have \Cref{LemmaEnB2} in mind because it establishes when $\bb$ belongs to $\B_r$.
Moreover, $A$ and the  $B$'s are as in \Cref{tt++}, \Cref{tt-+} or \Cref{tt--}, depending on the case considered, and always relative to the $\bb$ used in each case.

The following straightforward observation will be used repetitively: 

\begin{remark}\label{XYZWT}{\rm
Let $X$, $Y$, $Z$,  $W$  and $T$ be integers.
\begin{enumerate}
	\item \label{tf}
	If $v_p(X), v_p(Y)\leq m$ and $Y\equiv XW+Z\bmod p^m$ then
		$$v_p(X)\leq v_p(Y)  \quad \text{if and only if}\quad v_p(X)\leq v_p(Z)$$
	\item\label{tf2} If $v_2(X), v_2(Y)\leq m$ and $Y\equiv XW+Z+T2^{m-1}\bmod 2^m$ then
	$$v_2(X)\leq v_2(Y) \quad \text{if and only if} \quad \text{one of the following holds:} \begin{cases}
	2\mid T \text{ and } v_2(X)\leq v_2(Z); \\
	2\nmid T, \ v_2(X)\leq m-1 \text{ and } v_2(X)\leq v_2(Z);\\
	2\nmid T, \ v_2(X)=m  \text{ and }v_2(Z)= m-1. 
	\end{cases}$$
\end{enumerate}
}\end{remark}

\begin{lemma}\label{OPrima++} 
Suppose that $\sigma_1=1$ and $b\in \B_r$. 
Then $b  \in \B_r'$ if and only if the following conditions hold: 
	\begin{enumerate}
		\item If $o_1 =0$ then one of the following conditions holds:
		\begin{enumerate}
			\item $o'_1(b)\le o'_2(b)\le o'_1(b)+o_2 +n_1-n_2$ and 
			$\max(p-2,o'_2(b),n_1-m)>0$.
			\item $p=2$, $m=n_1$, $o_2'(b)=0$ and $o_1'(b)=1$.
		\end{enumerate}	
		\item If  $o_2 =0<o_1$ then $\max(p-2,o'_1(b),n_1-m)>0$ and
		$$o'_1(b)+\min(0,n_1-n_2-o_1 )\le o'_2(b)\le o'_1(b)+n_1-n_2.$$
		\item If $o_1 o_2 \ne 0$ then $o'_1(b)\le o'_2(b)\le o'_1(b)+n_1-n_2 $.
	\end{enumerate}
\end{lemma}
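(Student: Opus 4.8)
The plan is to use the change-of-basis formulas established in \Cref{tt++} to compare $o'(b)$ with $o'(\bb)$ for arbitrary $\bb\in\B_r$, and to show that the stated conditions on $(o'_1(b),o'_2(b))$ are exactly the conditions under which no admissible $\bb$ can produce a lexicographically larger pair $(|\bb_1|,|\bb_2|)$. Concretely, $b\in\B_r'$ means $o'(b)\ge_{\lex}o'(\bb)$ for every $\bb\in\B_r$, which splits into two requirements: (i) $o'_1(b)\ge o'_1(\bb)$ for all $\bb$, and (ii) whenever $o'_1(\bb)=o'_1(b)$ one has $o'_2(b)\ge o'_2(\bb)$. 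The whole argument is organized by the three cases on $(o_1,o_2)$ coming from \Cref{Fijando-r}.\eqref{CondicionesOs}, and within each case I would translate the congruences \eqref{t1++}--\eqref{t2++} into valuation inequalities using \Cref{XYZWT}.

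For the forward implication I would assume $b\in\B_r'$ and feed in carefully chosen test bases $\bb$. The $p$-adic valuations of the right-hand sides of \eqref{t1++} and \eqref{t2++} control $o'_i(\bb)$ via $o'_i(\bb)=m-v_p(t_i(\bb))$; applying \Cref{XYZWT}.\eqref{tf} to \eqref{t1++} with $X=2^{m-o'_1(b)}$ (absorbing $x_1$, a unit) and $Z=y_1u_2(b)p^{m-o'_2(b)+n_1-n_2}+B_1$ yields the comparison between $o'_1(b)$ and $o'_1(\bb)$, and hence the inequality $o'_1(b)\le o'_2(b)+\text{(shift)}$ that appears in each case. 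Picking $\bb=(b_2,b_1)$ (permitted only when $n_1>n_2$, hence the role of $n_1-n_2$) swaps the two coordinates and forces the upper bound $o'_2(b)\le o'_1(b)+n_1-n_2$ in cases (2) and (3); more delicate choices with nontrivial $x_2,y_1$ produce the lower bounds and the side conditions $\max(p-2,o'_i(b),n_1-m)>0$. The exceptional clause (1b) and the $+o_2+n_1-n_2$ slack in (1a) arise precisely from the correction term $B_1$ (the $2^{n_1-1}$ appearing when $p=2$, $m=n_1$) and from the constraints \eqref{B2os} on which $x_i,y_i$ are allowed when $o_1=0$; here I must use \Cref{XYZWT}.\eqref{tf2} rather than \eqref{tf}, since the extra $2^{m-1}$ term genuinely changes the valuation comparison.

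For the reverse implication I would assume $b\notin\B_r'$, fix some $\bb\in\B_r'$, and show that the displayed conditions on $o'(b)$ must fail. The key point is that \Cref{tt++} is symmetric in the roles of $b$ and $\bb$ (the same congruences hold after swapping), so from $o'(b)<_{\lex}o'(\bb)$ I can read off a violated inequality; the main work is checking that the change-of-basis data $(x_i,y_i,z_i)$ relating $b$ and $\bb$ is itself admissible in the sense of \Cref{LemmaEnB2}, so that the congruences apply. Throughout, the normalization $1\le u_i\le p^{o'_i}$ and $p\nmid u_i$ keeps the $u_i(b)$ factors as units, so they never affect valuations.

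The hard part will be the bookkeeping in case (1), $o_1=0$: there the allowed bases satisfy only the weak congruences $y_1\equiv y_2-1\equiv 0\bmod p^{o_2}$ from \Cref{LemmaEnB2}.\eqref{B2os}, so the term $y_1u_2(b)p^{m-o'_2(b)+n_1-n_2}$ carries an extra factor $p^{o_2}$, which is exactly what loosens the upper bound to $o'_1(b)+o_2+n_1-n_2$. Disentangling when the boundary is attained — distinguishing the generic bound (1a) from the sporadic $p=2$, $m=n_1$ configuration (1b) where the $B_1=x_1y_1 2^{n_1-1}$ term flips the comparison — requires the careful trichotomy of \Cref{XYZWT}.\eqref{tf2} and is where a sign or parity slip would do the most damage. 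I expect cases (2) and (3) to be comparatively routine once the valuation machinery is set up, with (3) being the cleanest since $o_1o_2\ne0$ forces $o'_1(b)\le o'_2(b)\le o'_1(b)+n_1-n_2$ directly from the two congruences without exceptional correction terms.
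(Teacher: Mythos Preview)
Your overall strategy matches the paper's: use \eqref{t1++}--\eqref{t2++} from \Cref{tt++}, translate via \Cref{XYZWT}, test with specific $\bb$ for the forward direction, and take $\bb\in\B_r'$ for the converse. However, there are two concrete gaps.

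First, your test basis $\bb=(b_2,b_1)$ for cases (2) and (3) is not admissible: in both cases \Cref{Fijando-r}.\eqref{CondicionesOs} forces $n_1>n_2$, and then \Cref{LemmaEnB2}.\eqref{EnB0EnB2} requires $p^{n_1-n_2}\mid x_2$, which fails for $x_2=1$. The swap is allowed only when $n_1=n_2$. The paper instead uses $\bb=(b_1b_2,b_2)$ (or $\bb=(b_1^{1-p^{o_1-o_2}}b_2,b_2)$ when $o_1o_2\ne0$) to extract the upper bound $o'_2(b)\le o'_1(b)+n_1-n_2$, and bases of the form $(b_1,b_1^{p^k}b_2^{\,\cdot})$ for the lower bounds. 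Relatedly, your claim that $x_1$ is always a unit fails in general (e.g.\ when $n_1=n_2$ and $o_2=0$).

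Second, you discuss only $B_1$ and never the term $A$ in \eqref{t2++}, which is nonzero precisely when $p=2$ and $0<m-n_2=o_1-o_2$. This case does occur with $o_1o_2\ne0$, so your expectation that case (3) is ``without exceptional correction terms'' is wrong. This is why the paper organizes the proof not by the trichotomy on $(o_1,o_2)$ but by three cases governing when $A$ and $B_i$ vanish: (i) the generic case ($p>2$, or $p=2$ with neither $m=n_1$ nor $0<m-n_2=o_1-o_2$), where $A=B_1=B_2=0$ and \Cref{XYZWT}.\eqref{tf} applies cleanly; (ii) $p=2$, $0<m-n_2=o_1-o_2$, where $A\ne0$ and one exploits that $o'_2$ is forced by \Cref{oo'}.\eqref{oo'Sigmas1}; (iii) $p=2$, $m=n_1$, where $B_i\ne0$ and \Cref{XYZWT}.\eqref{tf2} is needed. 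Your plan to organize by $(o_1,o_2)$ is not wrong in principle, but you would then have to thread the $A$ and $B_i$ subcases through each branch, which is more bookkeeping, not less.
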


\begin{proof} 
By \Cref{Fijando-r}.\eqref{sigma1} $\sigma_2=1$, and by \Cref{tt++} the congruences \eqref{t1++} and \eqref{t2++} hold for each $\bb\in \B_r$.
We will use this without explicit mention. We consider separately three cases:

\textbf{Case (i)}. Suppose that $p>2$ or neither $0<m-n_2=o_1-o_2$ 
nor $m=n_1$ hold. 
Then $A=B_1=B_2=0$.
We now apply \Cref{XYZWT}.\eqref{tf} to \eqref{t1++}, with $Y=(x_1y_2-x_2y_1) u_ 1(\bb) p^{m-o_1'(\bb)}$, $X=u_1(b) p^{m-o_1'(b)}$ and $Z=y_1u_2(b)p^{m-o_2'(b)+n_1-n_2}$,  and to  \eqref{t2++}, with
$Y=(x_1y_2-x_2y_1)  u_2(\bb) p^{m-o_2'(\bb)}$, 
$X= u_2(b) p^{m-o_2'(b)}$ and 
$Z=x_2u_1(b)p^{m-o_1'(b)+n_2-n_1}$.
It follows that
$o'(\bb)\le_{\lex} o'(b)$  if and only if
	\begin{equation}
	\label{des1++} o_2'(b) \leq v_2(y_1) +n_1-n_2  +o_1'(b)
	\end{equation}
	and 
	\begin{equation}\label{des2++}
	\text{if } o_1'(b)=o_1'(\overline b) \text{ then } o_1'(b) \leq v_2(x_2) +n_2-n_1 +o_2'(b).
	\end{equation}  
	
Suppose that $b\in \B_r'$ and consider several $\bb\in \B_r$.
In case $o_1=0$, we first take $\overline b=(b_1 b_2^{p^{o_2}}, b_2) \in \B_r$, so \eqref{des1++} take the form $o_2'(b)\leq o_2+n_1-n_2 +o_1'(b)$ and then we  take 
$\overline b=(b_1,b_1^{{p^{n_1-n_2}}} b_2)\in \B_r$ so that \eqref{des2++} yields $o_1'(b) \leq o_2'(b)$. Thus condition (1) holds. 
In case $o_2=0<o_1$ we first take $\overline b=(b_1b_2, b_2)\in \B_r$ so that 
\eqref{des1++} gives that $o_2'(b)\leq n_1-n_2+o_1'(b)$, and then we take 
$\overline b=(b_1, b_1^{p^{\max(n_1-n_2,o_1)}}b_2)\in \B_r$ which using 
\eqref {des2++} yields $o_1'(b) \leq \max(0, o_1-n_1+n_2)+o_2'(b)$. 
Thus condition (2) holds. 
Finally, if $o_1o_2\neq 0$, $o_2<o_1<o_2+n_1-n_2$ by \Cref{Fijando-r}.\eqref{CondicionesOs}.  In this case we take $\overline b=(b_1^{1-p^{o_1-o_2}}b_2,b_2)\in \B_r   $, and \eqref{des1++} yields $o_2'(b)\leq n_1-n_2 +o_1'(b)$, and then we take $\overline b=(b_1, b_1^{p^{n_1-n_2}} b_2^{1-p^{n_1-n_2-o_1+o_2}})\in \B_r$ so that \eqref{des2++} implies $o_1'(b) \leq o_2'(b)$. Hence condition (3) holds.
	
Conversely assume $b\notin \B_r'$ and suppose that our generic element $\bb$ belongs to $\B_r'$ so that $o'(b)<o'(\bb)$. Thus either \eqref{des1++} or \eqref{des2++} fails. 
If  \eqref{des1++} fails then $o_2'(b)>v_2(y_1)+n_1-n_2+o_1'(b)\geq n_1-n_2+o_1'(b)$ and hence $b$ does not satisfy the consequent of neither condition (2) nor condition (3).
Thus we may assume that $o_1=0$.
By \Cref{LemmaEnB2}, $2^{o_2} \mid y_1$, so $o_2'(b)>o_2+n_1-n_2+o_1'(b)$, and hence $b$ does not satisfy (1a).
As $p>2$ or $m\ne n_1$, it follows that $b$ does not satisfy (1b) either.
Thus, $b$ does not satisfy neither (1), (2) or (3).
Suppose otherwise that \eqref{des1++} holds but \eqref{des2++} does not. 
Thus $o_1'(b)=o_1'(\overline b)$ and $o_1'(b)>v_2(x_2) +n_2-n_1+o_2'(b)$.
By \Cref{LemmaEnB2}, $v_2(x_2)\ge n_1-n_2$ and hence
$o_1'(b)>v_2(x_2) +n_2-n_1+o_2'(b) \geq o_2'(b)$. 
In particular, as we are assuming that if $p=2$ then $m\ne n_1$, necessarily $b$ does not satisfy the consequent of neither condition (1) nor (3).
If $o_2=0<o_1$ then $2^{\max( n_1-n_2,o_1)}\mid x_2$, by \Cref{LemmaEnB2}, so $o_1'(b)>\max (0,o_1+n_2-n_1 )+o_2'(b)$. Hence $b$ does not satisfy condition (2). 
	
\textbf{Case (ii)}. Suppose that $p=2$ and $0<m-n_2=o_1-o_2$. In particular,  $o_1\neq 0$. By \Cref{oo'}.\eqref{oo'Sigmas1},
$o_2'(b)=o_2'(\bb)=2m-n_2-o_1=m-o_2 \geq o_1'(\bb)$ for every $\bb\in \B_r$. 
Therefore the first inequality in (3) holds and if $o_2=0$ then $o_1=m-n_2\leq n_1-n_2$ and hence $o'_1(b)+\min(0,n_1-n_2-o_1 )\le o'_2(b)$. 
Moreover, $o'(\bb)\ge o'(b)$ if and only if $o'_1(\bb)\ge o'_1(b)$.

Assume $n_1>m$, so that $B_i=0$. In this case we have to prove that $b\in\B_r'$ if and only if $o'_2(b)\le o'_1(b)+n_1-n_2$. 
Indeed, applying \Cref{XYZWT} to \eqref{t1++}, with suitable  $X,Y$ and $Z$, we deduce that $o_1'(\bb) \geq o_1'(b)$ if and only if \eqref{des1++} holds.    If $b\in \B_r'$  then, taking $\bb=(b_1^{1-p^{o_1-o_2}} b_2, b_2)\in \B_r$, we deduce that $o_2'(b) \leq o_1'(b) +n_1-n_2$. 
	 Conversely if $b \not\in \B_r'$ and $\bb\in \B_r'$ then 
$o_1'(\bb) >o_1'(b)$ and hence $o_2'(b)>v_2(y_1) +n_1-n_2+o_1'(b)\geq n_1-n_2+o_1'(b)$, as desired. 

Otherwise, i.e. if $n_1\le m$ then $n_1=m$, by \Cref{oo'}.\eqref{oo'Sigmas1}. 
In particular $o_2=0$, by \Cref{Cotasoo'}.\eqref{m=n1}. 
Moreover, applying \Cref{XYZWT} to \eqref{t1++}
\medskip

$o_1'(\bb) \leq o_1'(b)$ if and only if 
	\begin{equation}
	\text{one of the following conditions holds} \quad 
	\begin{cases} \label{des1++Especial}  
	2\mid x_1y_1 \text{ and }  o_2'(b) \leq v_2(y_1) +n_1-n_2  +o_1'(b); \\
	2\nmid x_1y_1, \  1\leq o_1'(b)\text{ and } o_2'(b) \leq  n_1-n_2  +o_1'(b);\\
	2\nmid x_1y_1, \ o_1'(b)=0\text{ and }  1 +n_1-n_2= o_2'(b).
	\end{cases}
	\end{equation} 
	However, the assumption $n_1=m>n_2$ implies that $2\nmid x_1$ by \Cref{LemmaEnB2}, and $2\le m-o_1=n_2-m+o'_2$, by \Cref{oo'}.\eqref{CotaOs} and since $p=2$.
	Thus the last case of \eqref{des1++Especial} does not hold and 
	$2\mid x_1y_1$ if and only if $2\mid y_1$.

	Assume $b\in \B_r'$. Then taking $\overline b=(b_1b_2,b_2)\in \B_r$ condition \eqref{des1++Especial} implies that $1\leq o_1'(b)$ and $ o_2'(b) \leq n_1-n_2+o_1'(b)$. Thus condition (2) holds. 
	Conversely, if $b\not\in \B_r'$ and $\bb\in \B_r'$ then $o_1'(\bb)>o_1'(b)$  so \eqref{des1++Especial} does not hold. If $2\mid y_1$  then $o_2'(b) 
	>v_2(y_1) +n_1-n_2+o_1'(b)\ge 1+n_1-n_2+o_1'(b)$, so that (2) does not hold. 
	Similarly, if $2\nmid y_1$ and $1\leq o_1'(b)$ then $o'_2(b)>n_1-n_2+o'_1(b)$, so that again (2) does not hold.

\textbf{Case (iii)}. Finally suppose that $p=2$ and $m=n_1$ but $0<m-n_2=o_1-o_2$ does not hold. 
 Then for every $\bb\in\B_r$, $A=0$ and by \Cref{oo'}.\eqref{oo'Sigmas1}, $o_1o_2=0$.
 
 Assume firstly that $n_2\geq m$. Then $n_2=m=n_1$, so $o_1=0$ by \Cref{Fijando-r}.\eqref{CondicionesOs}, and $B_i=x_iy_i2^{m-1}$ for every $\bb\in\B_r$.  
By \Cref{LemmaEnB2}, $y_1\equiv y_2-1\equiv 0 \bmod p^{o_2}$,
and \Cref{XYZWT} yields
\medskip

 $o'(\bb)\leq_{\lex} o'(b)$ if and only if one of the conditions in \eqref{des1++Especial} holds and 
	\begin{equation}\label{des2++Especial} 
	\text{if } o_1'(b)=o_1'(\bb) \quad \text{ then  one of the following holds }\quad \begin{cases}
	2\mid x_2y_2 \text{ and } o_1'(b) \leq v_2(x_2)    +o_2'(b);\\
	2\nmid x_2y_2, \  1\leq o_2'(b)\text{ and }o_1'(b) \leq o_2'(b); \\
	2\nmid x_2y_2, \ o_2'(b)=0\text{ and }1=o_1'(b).
	\end{cases}
	\end{equation}	
We have to prove that $b\in \B_r'$ if and only if condition (1) holds. 	
	Suppose $b\in \B_r'$. If $o_2=0$ then we can take $\overline b=(b_2,b_1) \in \B_r$, so \eqref{des1++Especial} implies  $o_2'(b) \leq o_1'(b)+o_2$; and if $o_2>0$ then, taking $(b_1b_2^{2^{o_2}}, b_2)\in \B_r$,  \eqref{des1++Especial} implies $o_2'(b) \leq o_1'(b)+o_2$. Moreover taking $\overline b=(b_1,b_1b_2)\in \B_r$ and using \eqref{des2++Especial} we obtain that either $1\leq o_2'(b)$ and $o_1'(b) \leq o_2'(b)$ or $o_2'(b)=0$ and $o_1'(b)=1$. Thus condition (1) holds.  Conversely, suppose $b\not\in\B_r'$  and take $\bb\in \B_r'$ so that $o'(\bb)>o'(b)$. Thus either none of the conditions of \eqref{des1++Especial} holds or \eqref{des2++Especial} does not hold.
	Suppose that \eqref{des1++Especial} does not hold. 
	If $2\mid x_1y_1$ then $o_2'(b)>v_2(y_1)+o_1'(b)\ge o_1'(b)+o_2+n_1-n_2$, so that (1) does not hold.  
	If $2\nmid x_1y_1$ then either $o'_2(b)>o'_1(b)$, or  $o_1'(b)=0$  and $1+n_1-n_2\neq o_2'(b)$.
	In any case condition (1) does not hold.
	Suppose that \eqref{des2++Especial} does not hold.
	Therefore $o'_1(\bar b)=o'_1(b)$ and the three conditions on the right part of \eqref{des2++Especial} fail.
	If $2\nmid x_2y_2$ then $o'(b)\ne (0,1)$ and hence (1b) fails; and moreover $o'_2(b)=0$ or $o'_1(b)>o'_2(b)$ so that (1a) fails too.
	Suppose that $2\mid x_2y_2$.
	Then $o'_1(b)>v_2(x_2)+o'_2(b)\ge o'_2(b)$ and hence (1a) fails.
	If moreover $2\mid x_2$ then $o'_1(b)>v_2(x_2)+o'_2(b)>o'_2(b)\ge 0$ so that (1b) fails too.
	So we assume that $2\nmid x_2$ and hence $2\mid y_2$.
	Then $2\nmid y_1$ by \Cref{LemmaEnB2}.
	If (1b) holds then \eqref{t1++} yields the following contradiction $2^{m-1}\equiv x_1(1+y_1)2^{m-1} \equiv 0 \mod 2^m$. This completes the case $m\le n_2$.

	Finally suppose that $m>n_2$. By assumption $0<m-n_2\neq o_1-o_2$.
	In particular $o_1\ge o_1+o'_2-m=m-n_2>0$, thus $o_2=0$. So we have to prove that $b\in\B_r'$ if and only if condition (2) holds.
	Moreover, by \Cref{oo'}.\eqref{oo'Sigmas1}, $o_2'(\bb)=2m-n_2-o_1$ for each $\bb\in \B_r$. Then $n_1-n_2-o_1=m-n_2-o_1=o_2'(\bb)-m\le 0$ and hence
	$o_1'(b) +\min(0,n_1-n_2-o_1) = o_1'(b) +o_2'(b) -m \leq o_2'(b)$.
	As in the previous case, the third condition of \eqref{des1++Especial} does not hold. 
	Therefore $o'(\bb) \leq_{\lex} o'(b)$  if and only if $o_1'(\bb) \leq o_1'(b)$ if and only if one of the first two conditions in \eqref{des1++Especial} holds. 
	Suppose $b\in \B_r'$. Then taking $\bb=(b_1b_2,b_2)\in \B_r'$  from \eqref{des1++Especial} we obtain  that condition (2) holds. Conversely, if $b\not\in\B_r'$ and $\overline b\in \B_r'$ then $o_1'(\bb)>o_1'(b)$ and consequently none of the conditions in \eqref{des1++Especial} hold. If $2\mid x_1y_1$ then $2\mid y_1$ and $o'_2(b)>1+n_1-n_2+o'_1(b)$, so that condition (2) fails. Otherwise either $o_1'(b)=0$ or $o_2'>n_1-n_2+o_1'$.
	In all the cases condition (2) fails. 
\end{proof}

\begin{lemma} \label{OPrima-+} 
	Suppose $\sigma_1=-1$, $\sigma_2=1$ and let $b\in \B_r$. Then $b\in \B_r'$ if and only if the following conditions hold:
	\begin{enumerate}
		\item If $m\leq n_2$ then   $o'_1(b)\le o'_2(b)$ or $o_2=0<n_1-n_2<o_1$
	
		\item If $m>n_2$ then   $o'_1(b)=1$ or $   o_1+1\neq n_1$. 
	\end{enumerate} 
\end{lemma}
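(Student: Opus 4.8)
The plan is to exploit that, when $\sigma_1=-1$ and $\sigma_2=1$, one coordinate of $o'$ is constant on $\B_r$, so that deciding $b\in\B_r'$ reduces to maximizing a single coordinate taking values in $\{0,1\}$. By \Cref{Fijando-r} we have $p=2$ and $n_1>n_2$, and \Cref{tt-+} controls exactly how $o'_1(\bb),o'_2(\bb)$ move as the exponents $(x_i,y_i)$ of a base $\bb=(b_1^{x_1}b_2^{y_1}[b_2,b_1]^{z_1},b_1^{x_2}b_2^{y_2}[b_2,b_1]^{z_2})\in\B_r$ vary. The admissible exponents are those of \Cref{LemmaEnB2}, and in particular $2^{n_1-n_2}\mid x_2$ always; I keep these constraints in mind whenever I build a witness base.

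In the case $m\le n_2$, \Cref{tt-+}(1) gives $o'_1(\bb)=o'_1(b)$ for all $\bb\in\B_r$ and $o'_1,o'_2\le 1$, so $o'(\bb)\le_{\lex}o'(b)$ for every $\bb$ iff $o'_2(b)$ is maximal; the only delicate configuration is $o'_1(b)=1,\ o'_2(b)=0$, every other case giving $b\in\B_r'$ and the first clause of condition (1) automatically. By \Cref{tt-+}(1), some $\bb$ then has $o'_2(\bb)=1$ precisely when an admissible base with $v_2(x_2)=n_1-n_2$ exists. I run through the three possibilities for $(o_1,o_2)$ left by \Cref{Fijando-r}.\eqref{CondicionesOs} once $\sigma_2=1$, namely $o_1=0$, $o_2=0<o_1$, and $0<o_2<o_1$, and read off from \Cref{LemmaEnB2}.\eqref{B2os} the extra $2$-divisibility imposed on $x_2$: none, $2^{o_1}\mid x_2$, and $2^{o_1-o_2}\mid x_2$ respectively (with $o_1-o_2<n_1-n_2$ by \eqref{CondicionesOs}). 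Hence $v_2(x_2)=n_1-n_2$ is reachable except exactly when $o_2=0<o_1$ and $o_1>n_1-n_2$, i.e. $o_2=0<n_1-n_2<o_1$, which is condition (1).

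For $m>n_2$, \Cref{tt-+}(2) gives the opposite picture: $o'_2(\bb)=m+1-n_2>1$ is constant on $\B_r$ and $o'_1\le 1$, so $b\in\B_r'$ iff $o'_1(b)$ is maximal, and the only delicate configuration is $o'_1(b)=0$. By \Cref{tt-+}(2) a base $\bb$ has $o'_1(\bb)\ne o'_1(b)$ iff $n_1=o_1+1$ and $2\nmid y_1$; so if $n_1\ne o_1+1$ then $o'_1$ is constant and $b\in\B_r'$. If $n_1=o_1+1$ then $o_1=0$ would force $n_1=1$, contradicting $n_1>n_2\ge1$, so $o_1\ge1$ and only the subcases $o_2=0<o_1$ and $0<o_2<o_1$ occur; in each, \Cref{LemmaEnB2}.\eqref{B2os} restricts $x_1,x_2,y_2$ but leaves room to take $y_1$ odd, producing $\bb$ with $o'_1(\bb)=1>o'_1(b)$. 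Thus $b\in\B_r'$ iff $o'_1(b)=1$ or $n_1\ne o_1+1$, which is condition (2).

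Following the template of the section, I would phrase each case as two implications: for ``$b\in\B_r'\Rightarrow$ conditions'' assume the relevant clause fails and exhibit the explicit $\bb$ above to contradict maximality of $o'(b)$; for the converse assume $b\notin\B_r'$, take $\bb\in\B_r'$, and read off from $o'(b)<_{\lex}o'(\bb)$ that the clause fails. I expect the only real obstacle to be the feasibility bookkeeping: in each $(o_1,o_2)$ subcase one must simultaneously satisfy all congruences of \Cref{LemmaEnB2}.\eqref{B2os}, the divisibility $2^{n_1-n_2}\mid x_2$, and the nondegeneracy $x_1y_2\not\equiv x_2y_1 \bmod 2$, while hitting the prescribed valuation $v_2(x_2)=n_1-n_2$ or the parity $2\nmid y_1$; checking that a compatible choice always exists outside the single excluded clause is the delicate step.
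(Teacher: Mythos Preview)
Your proposal is correct and follows essentially the same approach as the paper's proof: in each of the two cases you identify the coordinate of $o'$ that is constant on $\B_r$ (via \Cref{tt-+}), reduce the question to whether the other coordinate can be flipped, and then determine precisely when an admissible $\bb$ realizing the flip exists by checking the constraints of \Cref{LemmaEnB2}. The paper proceeds identically, the only cosmetic difference being that it writes down a single explicit witness $\bb=(b_1,b_1^{2^{n_1-n_2}}b_2)$ in case $m\le n_2$ and $\bb=(b_1^{1-2^{o_1-o_2}}b_2,b_2)$ in case $m>n_2$, whereas you argue feasibility case by case over the possibilities for $(o_1,o_2)$; both routes are short and amount to the same verification.
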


\begin{proof}
By \Cref{Fijando-r}, $p=2$ and $n_1>n_2$ and by \Cref{oo'}, $o'_1(b), o'_1(\bb)\le 1$.
	
	(1) Assume that $m\leq n_2$. By \Cref{tt-+}, $o'_1(b)=o'_1(\bb)$ and $o'_2(b),o'_2(\bb)\le 1$. Moreover $o'_2(b)\ne o'_2(\bb)$ if and only if $v_2(x_2)=n_1-n_2$ and $o'_1(b)=1$.
	This implies that if $o'_1(b)\le o'_2(b)$ then $b\in \B_r'$ because if $o'_1(b)=1$ then $o'_2(b)=1$ and otherwise $o'_2(b)=o'_2(\bb)$ for every $\bb\in \B_r$.
	It also implies, in combination with \Cref{LemmaEnB2}, that if $o_2=0$ 
	and $n_1-n_2<o_1$  then $o'_2(b)=o'_2(\bb)$, so that in this case also $b\in \B_r'$. 
	Suppose otherwise that $o'_2(b)<o'_1(b)$ and either $o_2\ne 0$ or $o_1\le 
	n_1-n_2$. 
	Then $o'_1(b)=1$, $o'_2(b)=0$, $\bb=(b_1,b_1^{2^{n_1-n_2}}b_2)\in \B_r$ and $o'_2(\bb)> o'_2(b)$. Thus $b\not\in \B_r'$.

	(2) Assume that $m>n_2$. Then $o'_2(b)=o'_2(\bb)=  m-n_2+1$ by \Cref{oo'}.\eqref{oo'SigmasDistintas}. Moreover, by \Cref{tt-+}, $o'_1(b)\ne o'_1(\bb)$ if and only if
		$2\nmid y_1$ and $n_1=o_1+1$. This implies that if $o'_1(b)=1$ 
	then $b\in \B_r'$. 
	If $o_1+1\ne n_1$ then $o_1'(b)=o_1'(\bb)$  and as $o'_2(b)=o'_2(\bb)$, in this case $\B_r=\B_r'$ and in particular $b\in \B_r'$.
	Finally, if $o'_1(b)\ne 1$ and $o_1+1=n_1$ then, taking $\bb=(b_1^{1-2^{o_1-o_2}}b_2,b_2)\in \B_r$, we obtain that $o'_1(\bb)>o'_1(b)=0$. Therefore $b\not\in \B_r'$.
\end{proof}

\begin{lemma}\label{OPrima--} 
	Suppose that $\sigma_2=-1$ and let $b\in \B_r$.
	Then $b  \in \B_r'$ if and only if the following conditions hold:  
	\begin{enumerate}
		\item If  $o_1 \le o_2$  and $n_1>n_2$ then $o_1'(b) \leq o_2'(b)$.
		\item If  $o_1=o_2$ and $n_1=n_2$ then $o_1'(b)\geq o_2'(b)$ 
		\item If $o_2=0<o_1=n_1-1$ and $n_2=1$ then  $o'_1(b)=1$ or $o'_2(b)=1$.  
		\item If $o_2=0<o_1$ and either $n_1\ne o_1+1$ or $n_2\ne 1$, then $o'_1(b)+\min(0,n_1-n_2-o_1)\le o'_2(b)$.
		\item If $o_1 o_2 \ne 0$ and $o_1\ne o_2$ then $o'_1(b)\le o'_2(b) $.  
	\end{enumerate}
\end{lemma}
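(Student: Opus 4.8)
The plan is to follow the template laid out before \Cref{OPrima++}, specialised to $\sigma_2=-1$. First I would record the reductions: since $\sigma_2=-1$, \Cref{Fijando-r} forces $\sigma_1=-1$ and $p=2$, while \Cref{oo'}.\eqref{Cotao'} gives $o'_i(b),o'_i(\bb)\in\{0,1\}$ and $u_i(b)=u_i(\bb)=1$ for every $\bb\in\B_r$, so that $t_i(\bb)=2^{m-o'_i(\bb)}$ and the lexicographic comparison of $o'(b)$ with $o'(\bb)$ is just a comparison of two bits. The whole analysis is then driven by the congruences \eqref{t1--} and \eqref{t2--} of \Cref{tt--}, read against the membership constraints of \Cref{LemmaEnB2}: the divisibility $2^{n_1-n_2}\mid x_2$ and $x_1y_2\not\equiv x_2y_1$ from part \eqref{EnB0EnB2}, the parities $x_1\not\equiv y_1$ and $x_2\not\equiv y_2\bmod 2$ forced by $\sigma_2=-1$ in part \eqref{B2Sigma}, and the divisibilities of part \eqref{B2os} determined by $o_1,o_2$.

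The second step is to convert \eqref{t1--} and \eqref{t2--} into valuation inequalities via \Cref{XYZWT}. The pivotal observation is that when $n_1>n_2$ the second summand of \eqref{t1--} has $2$-adic valuation $\ge m$ while $x_1$ is odd (this last point is forced, since $2^{n_1-n_2}\mid x_2$ makes $x_2$ even, whence $x_1y_2\not\equiv x_2y_1$ gives $x_1$ odd), so that $o'_1(\bb)=o'_1(b)$ for all $\bb\in\B_r$. Hence in every case except $n_1=n_2$, membership $b\in\B_r'$ is governed only by $o'_2(\bb)\le o'_2(b)$, which \Cref{XYZWT}.\eqref{tf} rewrites, when the correction term $B$ vanishes, as $o'_1(b)\le o'_2(b)+v_2(x_2)-(n_1-n_2)$. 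I would then split along the four cases of \Cref{Fijando-r}.\eqref{CondicionesOs}, which align exactly with the hypotheses: $o_1=o_2$ gives (1) if $n_1>n_2$ and (2) if $n_1=n_2$; $0<o_2<o_1$ gives (5); and $o_2=0<o_1$ gives (3) or (4). For the direct implication I would substitute an explicit $\bb$ minimising the relevant valuation: using \eqref{B2os} the least possible value of $v_2(x_2)$ is $\max(o_1,n_1-n_2)$ when $o_2=0<o_1$ and $n_1-n_2$ otherwise, and this produces precisely $o'_1(b)\le o'_2(b)$ in cases (1) and (5) and $o'_1(b)+\min(0,n_1-n_2-o_1)\le o'_2(b)$ in case (4). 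When $n_1=n_2$ the first coordinate is no longer frozen, and the decisive move is the swap $\bb=(b_2,b_1)\in\B_r$, for which $o'(\bb)=(o'_2(b),o'_1(b))$ and the inequality forces the reversed bound $o'_1(b)\ge o'_2(b)$ of case (2). Each converse is the mirror image: assuming $b\notin\B_r'$ and taking $\bb\in\B_r'$, the strict inequality $o'(b)<_{\lex}o'(\bb)$ reads off the failure of the corresponding bound, again with \Cref{LemmaEnB2} controlling $v_2(x_2)$ and $v_2(y_1)$.

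The hard part will be case (3), namely $o_2=0<o_1=n_1-1$ and $n_2=1$, which is exactly the configuration in which the correction term $B=x_2 2^{m-n_1}$ of \Cref{tt--} is active. For the extremal base with $v_2(x_2)=o_1$ one computes $v_2(B)=m-1$, so $B\equiv 2^{m-1}\bmod 2^m$ and the clean \Cref{XYZWT}.\eqref{tf} no longer applies; one must instead invoke \Cref{XYZWT}.\eqref{tf2} with $T$ odd, whose third alternative ($v_2(X)=m$ together with $v_2(Z)=m-1$) is precisely what replaces the expected inequality by the disjunction ``$o'_1(b)=1$ or $o'_2(b)=1$''. Concretely, evaluating \eqref{t2--} for this $\bb$ (where $v_2(Z)=m-o'_1(b)$) shows that $o'_2(\bb)\le o'_2(b)$ fails exactly when $o'(b)=(0,0)$, whereas every base with larger $v_2(x_2)$ leaves $o'_2(\bb)=o'_2(b)$; hence $b\in\B_r'$ if and only if $o'(b)\ne(0,0)$, which is condition (3). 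I would therefore isolate (3) as its own subcase, check via \Cref{LemmaEnB2} that the base with $v_2(x_2)=o_1$ does lie in $\B_r$ and is the binding one, and note that in the complementary case (4) the hypothesis $o_1+1\ne n_1$ or $n_2\ne 1$ is exactly what forces $B\equiv 0$, so that the uniform valuation argument of the generic cases applies verbatim.
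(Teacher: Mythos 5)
Your overall plan runs on the same machinery as the paper's proof (\Cref{tt--}, \Cref{XYZWT}, \Cref{LemmaEnB2}, \Cref{Fijando-r}), but it is organized differently, and for the cases with $n_1>n_2$ your organization is genuinely cleaner: the observation that \eqref{t1--} \emph{freezes} the first coordinate (since $2^{n_1-n_2}\mid x_2$ makes $x_2$ even, hence $x_1$ odd, while the $y_1$-term has valuation at least $m$, so $o'_1(\bb)=o'_1(b)$ for all $\bb\in\B_r$) reduces the lexicographic comparison to the single inequality $o'_1(b)\le o'_2(b)+v_2(x_2)-(n_1-n_2)$, which is monotone in $v_2(x_2)$; both implications then follow from the extremal value of $v_2(x_2)$ permitted by \Cref{LemmaEnB2}, whereas the paper runs a contrapositive argument through the pair \eqref{des1--}/\eqref{des2--}. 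Your computed minima of $v_2(x_2)$ are correct, and your isolation of case (3) — where $B\equiv 2^{m-1}\bmod 2^m$ for the extremal basis, \Cref{XYZWT}.\eqref{tf2} with $T$ odd applies, and the comparison fails exactly when $o'(b)=(0,0)$ — agrees with the paper's Case (i) and checks out.

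There is, however, one genuine gap: the converse of condition (2), i.e.\ the case $n_1=n_2$ (where necessarily $o_1=o_2$ by \Cref{Fijando-r}). Here the frozen-coordinate argument does not apply, and your stated mechanism — that the strict inequality $o'(b)<_{\lex}o'(\bb)$ ``reads off the failure of the corresponding bound'' — breaks down in the subcase $o'_1(\bb)=o'_1(b)=1$ and $o'_2(b)=0<1=o'_2(\bb)$. In that scenario \eqref{t2--} forces $v_2(x_2)=0$, but condition (2) is \emph{not} violated (indeed $o'_1(b)=1\ge 0=o'_2(b)$), so there is no bound whose failure you can read off; what must be shown instead is that no such $\bb$ exists in $\B_r$. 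This requires feeding the parity constraints of \Cref{LemmaEnB2}.\eqref{B2Sigma} back into \eqref{t1--}: $x_2$ odd forces $y_2$ even, hence $y_1$ odd and $x_1$ even, and then \eqref{t1--} gives $2^{m-o'_1(\bb)}\equiv x_1 2^{m-1}\equiv 0 \bmod 2^m$, i.e.\ $o'_1(\bb)=0$, contradicting $o'_1(\bb)=o'_1(b)=1$. This self-contradiction argument is precisely the step the paper performs at the end of its Case (ii) (``hence $2\nmid x_2y_1$ and $2\mid x_1,y_2$ \dots therefore \Cref{tt--} yields the contradiction''), and it is an idea your sketch does not supply, even though you list the parity constraints among your tools.
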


\begin{proof}
By \Cref{Fijando-r} and \Cref{oo'}.\eqref{Cotao'}, $p=2$, $\sigma_1=-1$ and $o'_i(b), o'_i(\bb)\le 1$ for $i=1,2$. Moreover,   \eqref{t1--} and \eqref{t2--} hold.  We consider separately two cases:

	\textbf{Case  (i)}. Suppose that $o_1+1=n_1$, $o_2=0<o_1$ and $n_2=1$. Then  the summand $B$ in \eqref{t2--} is  $ x_22^{m-n_1}=T2^{m-1}$ with $T=x_22^{n_2-n_1}\in \Z$. 
	Applying \Cref{XYZWT} to the congruences \eqref{t1--} and \eqref{t2--}, and having in mind that $o'_i(b)\le 1$  we obtain that $o'(\bb)\le_{\lex} o'(b)$ if and only if 
	\begin{equation}\label{des1--}
		o_2'(b) \leq v_2(y_1) +n_1-n_2  +o_1'(b);
		\end{equation}    and  
	\begin{equation}\label{des2--especial}
	\text{if } o'_1(b)=o'_1(\bb) \text{ and } v_2(x_2)=n_1-n_2 \text{ then } o'_1(b)=1 \text{ or } o'_2(b)=1.
	\end{equation}
	Actually \eqref{des1--} always holds because $v_2(y_2)+n_1-n_2+o'_1(b)\ge 
	n_1-n_2=o_1\ge 1\ge o'_2(b)$. 	
	Thus $o'(\bb)\le_{\lex}o'(b)$ if and only if \eqref{des2--especial} holds. 
	Moreover, by the assumptions,  none of the antecedents  in 
	(1), (2), (4)  and  (5) holds,   while  the antecedent of   (3)  holds. 
	So we have to prove that $b\in\B_r'$ if and only if $o'_1(b)=1$ or $o'_2(b)=1$.
	Indeed, if $b\in \B_r'$ then taking $\bb=(b_1,b_1^{2^{o_1}} b_2)\in \B_r$ we obtain that $o'_1(b)=1$ or $o'_2(b)=1$ by \eqref{des2--especial}. Conversely, if $b\not\in\B_r'$ and $\bb\in \B_r'$ then \eqref{des2--especial} fails, so that $o'_1(b)=o'_2(b)=0$ and hence condition (3) fails. This finishes the proof in Case (i).

	\textbf{Case (ii)}. 
	Assume that one of the following conditions holds: 
	$o_1+1\ne n_1$, $o_2\ne 0$, $o_1=0$ or $n_2\ne 1$.  
	
	Then  the summand $B$ in \eqref{t2--} is  $ 0$ and by \Cref{XYZWT}, $o'(\bb)\le_{\lex}o'(b)$ if and only if \eqref{des1--} holds
	and 
	\begin{equation}\label{des2--}
	\text{if } o'_1(b)=o'_1(\bb) \text{ then } o_1'(b) \leq v_2(x_2) +n_2-n_1 +o_2'(b).
	\end{equation}  
	By hypothesis, the antecedent of  (3)  does not hold in this case. So we have to prove that $b\in \B_r'$ if and only if the conditions   (1), (2), (4) and (5)  hold. 
	
	Suppose that $b\in \B_r'$. 
	Assume $o_1\leq o_2$ and $n_1>n_2$.
	Then either $o_1=0$  or $0<o_1=o_2$  by \Cref{Fijando-r}.\eqref{CondicionesOs}. 
	In the first case   take    $\overline b=(b_1,b_1^{2^{n_1-n_2}}b_2  )\in \B_r$, and in the second  take  $\overline b=(b_1,b_1^{2^{n_1-n_2}}b_2^{1-2^{n_1-n_2}}  )\in \B_r$. Either way $o_1'(b)\leq o_2'(b)$  by \eqref{des2--}.
	Thus condition (1) hold. 
	If  $o_1=o_2$ and $n_1=n_2$ then, taking $\overline b=(b_2,b_1)$, \eqref{des1--} yields $o_1'(b)\geq o_2'(b)$.  Hence condition (2) holds.
	If $o_2=0<o_1$ then, taking $\overline b=(b_1,b_1^{2^{\max(n_1-n_2,o_1)}} b_2)\in \B_r$, \eqref{des2--} yields $o'_1(b) +\min(0,n_1-n_2-o_1 )\le o'_2(b) $.
	Thus condition (4) holds.
	If $o_1o_2\ne 0$ and $o_1\ne o_2$ then $o_2<o_1<o_2+n_1-n_2$, by \Cref{Fijando-r}.\eqref{CondicionesOs}. Then  taking, $\overline b=(b_1,b_1^{2^{n_1-n_2}} b_2^{1-2^{n_1-n_2+o_2 -o_1 }}) \in \B_r$, \eqref{des2--} yields $o_1'(b) \leq o_2'(b)$.

	Conversely, suppose that $b$ verify   (1), (2), (4) and (5)  and $b\not\in\B_r'$. Take $\overline b \in \B_r'$. 
	Then either \eqref{des1--} or \eqref{des2--} fails.
	If  \eqref{des1--} fails then $1\geq o_2'(b)> v_2(y_1) +n_1-n_2 +o_1'(b)$, 
	so necessarily $o_2'(b)=1$ and  $v_2(y_1)=o_1'(b)=n_1-n_2 =0$. 
	Then   either $o_1=0$ or $o_1=o_2> 0$ by \Cref{Fijando-r}.\eqref{CondicionesOs}.
	If $o_1=0$ then $ o_2\le  v_2(y_1)=0$  by \Cref{LemmaEnB2}. Hence  either way  $o_1=o_2$ and    condition   (2)  yields the contradiction $0=o_1'(b) \geq o_2'(b)=1$.   
	Suppose  that  condition \eqref{des2--} does not hold. 
	Then $o_1'(b)=o_1'(\bb)$ and $1\ge o_1'(b) > v_2(x_2) +n_2-n_1+o_2'(b) \ge 0 $.
	Therefore $o'_1(b)=1$, $o'_2(b)=0$ and $v_2(x_2)=n_1-n_2$. 
	If    $o_1\leq o_2$  then $n_1=n_2$ by condition  (1), hence $2\nmid x_2y_1$ and $2\mid x_1,y_2$ by \Cref{LemmaEnB2}.\eqref{B2Sigma}, therefore \Cref{tt--} yields the contradiction $0= o_2'(b)= o_1'(\bb)=o_1'(b) =1$.  
	Thus $o_2< o_1$.
	By condition  (5),  $o_1o_2=0$, so necessarily $o_2=0<o_1$. Then $n_1-n_2=v_2(x_2) \ge  o_1$ by \Cref{LemmaEnB2}, and consequently
	$\min(0,n_1-n_2-o_1) +o'_1(b)=o'_1(b)>o_2'(b)$ in contradiction with condition (4).  
\end{proof}

\section{Description of $\B_{rt}$}\label{SectionBrt}

In this section we describe $\B_{rt}$, or more precisely we describe the conditions that an element $b\in \B'_r$ must satisfy to belong to $\B_{rt}$.
We consider separately the cases $\sigma_1=1$ and $\sigma_1=-1$ and use the following notation from the main theorem:
\begin{eqnarray*}
a_1 &=& \min(o'_1,o_2,o_2+n_1-n_2+o'_1-o'_2); \\
a_2 &=&   \begin{cases}
	0, &\text{if } o_1=0; \\
	\min(o_1,o'_2,o'_2-o'_1+\max(0,o_1+n_2-n_1)), & \text{if } o_2=0<o_1; \\ \min(o_1-o_2,o'_2-o'_1), & \text{otherwise.} \end{cases}
\end{eqnarray*}
 
\begin{lemma}\label{ues+}
	Assume $\sigma_1=1$ and let $b\in \B_r'$. Then $b\in \B_{rt}$ if and only if  $u_1(b)\le p^{a_1}$ and  one of the following holds:\begin{enumerate}
		\item $u_{ 2 }(b)\leq p^{a_{ 2 }}$;
		\item $o_1o_2\neq 0$, $n_1-n_2+o_1'-o_2'=0<a_1$,     $1+p^{a_2}\leq u_2(b)\leq 2p^{a_2}$, and $u_1(b)\equiv 1\bmod p$.
	\end{enumerate}
\end{lemma}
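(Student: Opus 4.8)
The plan is to compare $u(b)=(u_2(b),u_1(b))$ with $u(\bb)$ for an arbitrary $\bb\in\B_r'$ by specializing the transformation formulae of \Cref{tt++}. Since $\sigma_1=1$ forces $\sigma_2=1$ (\Cref{Fijando-r}.\eqref{sigma1}), the congruences \eqref{t1++} and \eqref{t2++} are available; and since both $b$ and $\bb$ lie in $\B_r'$ I may substitute $o_i'(\bb)=o_i'(b)=o_i'$ and cancel the common factors $p^{m-o_1'}$ and $p^{m-o_2'}$. This turns \eqref{t1++} and \eqref{t2++} into congruences of the form
\begin{align*}
(x_1y_2-x_2y_1)\,u_1(\bb)&\equiv x_1u_1(b)+y_1u_2(b)\,p^{\,n_1-n_2+o_1'-o_2'}+C_1\pmod{p^{o_1'}},\\
(x_1y_2-x_2y_1)\,u_2(\bb)&\equiv y_2u_2(b)+x_2u_1(b)\,p^{\,n_2-n_1+o_2'-o_1'}+C_2\pmod{p^{o_2'}},
\end{align*}
where $x_1y_2-x_2y_1$ is a unit and $C_1,C_2$ absorb the summands $A,B_1,B_2$ (nonzero only in the boundary cases $p=2$ with $m=n_1$ or $0<m-n_2=o_1-o_2$). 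Read together, these display $(u_1(\bb),u_2(\bb))$ as the image of $(u_1(b),u_2(b))$ under the matrix $\left(\begin{smallmatrix}x_1&y_1\\x_2&y_2\end{smallmatrix}\right)$ divided by its determinant, with the admissible entries prescribed by \Cref{LemmaEnB2}.\eqref{B2os}.

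Because $u(b)$ is ordered as $(u_2(b),u_1(b))$, I would first determine the reachable values of $u_2(\bb)$ and then, among the bases attaining the minimal $u_2$, those of $u_1(\bb)$. For each coordinate there are two moves: a \emph{scaling} move (replacing a generator by a coprime power, which multiplies $u_i(\bb)$ by a unit ranging over the subgroup of $\U_{p^{o_i'}}$ cut out by \Cref{LemmaEnB2}.\eqref{B2os}) and a \emph{shearing} move (adding a power of one generator to the other, which shifts $u_i(\bb)$). I would compute the minimal $p$-adic valuation of the shifts produced by these moves in each of the three admissible configurations $o_1=0$, $o_2=0<o_1$, and $0<o_2<o_1$ allowed by \Cref{Fijando-r}.\eqref{CondicionesOs}, using the bounds of \Cref{oo'}, \Cref{Cotasoo'} and \Cref{OPrima++}; these minimal valuations are exactly $a_1$ and $a_2$. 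The outcome is that $u_2(\bb)\equiv u_2(b)\pmod{p^{a_2}}$ and $u_1(\bb)\equiv u_1(b)\pmod{p^{a_1}}$ for all $\bb\in\B_r'$, while, outside the coupled situation treated below, every unit in the respective residue classes is attainable; hence the lexicographic minimum has $u_2$ equal to the least unit congruent to $u_2(b)$ modulo $p^{a_2}$ and $u_1$ the least unit congruent to $u_1(b)$ modulo $p^{a_1}$. Since the least unit in such a class lies in $\{1,\dots,p^{a}\}$, this yields the bounds $u_1(b)\le p^{a_1}$ and $u_2(b)\le p^{a_2}$, i.e. the principal characterization. Throughout I would invoke \Cref{XYZWT} to convert each congruence into the valuation inequality deciding whether a move keeps $o_i'(\bb)=o_i'$, i.e. whether $\bb$ remains in $\B_r'$.

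The delicate point, and the main obstacle, is the coupling that occurs precisely when $n_1-n_2+o_1'-o_2'=0$ and $o_1o_2\ne 0$ (so case (d), where \Cref{Fijando-r}.\eqref{CondicionesOs} forces $o_1-o_2<n_1-n_2$ and hence $a_2=o_1-o_2$). There the factor $p^{\,n_1-n_2+o_1'-o_2'}$ in the first congruence equals $p^0$, so the scaling move that lowers the coefficient of $p^{a_2}$ in $u_2(\bb)$ simultaneously changes $u_1(\bb)$ modulo $p$ by the \emph{same} increment. I would make this precise by proving that, along $\B_r'$, the quantity $u_1(\bb)-d(\bb)\pmod p$ is invariant, where $d(\bb)$ is the coefficient of $p^{a_2}$ in $u_2(\bb)$. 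Minimizing $u_2$ then amounts to driving this digit to $0$, which is legitimate only when it keeps $u_1(\bb)$ a unit, i.e. only when the invariant is nonzero. When it vanishes the digit $0$ is forbidden (it would force $u_1(\bb)\equiv 0$, hence $o_1'(\bb)<o_1'$ and $\bb\notin\B_r'$, which is also why one needs $a_1>0$, equivalently $o_1'>0$), the smallest admissible digit is $1$, so the minimal $u_2$ lands in the window $1+p^{a_2}\le u_2\le 2p^{a_2}$ and forces $u_1(b)\equiv 1\pmod p$; this is exactly alternative (2). Assembling the generic minimization with this exceptional window, and checking the $u_1$-minimization among the bases realizing the minimal $u_2$, gives the stated equivalence. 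The remaining work is the configuration-by-configuration valuation bookkeeping, which is routine once the matrix picture and the admissibility constraints of \Cref{LemmaEnB2} are in hand.
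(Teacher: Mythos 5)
Your overall strategy is the same as the paper's: compare $u(b)$ with $u(\bb)$ through \eqref{t1++} and \eqref{t2++} of \Cref{tt++}, identify $a_1,a_2$ as the relevant moduli, and isolate an exceptional ``coupled'' window producing alternative (2). However, a key intermediate claim is wrong. Set $a_1'=n_1-n_2-\max(o_1-o_2,o_2'-o_1')$ (as in the paper's proof). When $o_1o_2\ne 0$, what the congruences actually yield for an arbitrary $\bb\in\B_r'$ is only $u_1(\bb)\equiv u_1(b)\bmod p^{\min(a_1,a_1')}$, together with the coupled relation: writing $u_1(b)-u_1(\bb)=\lambda_1p^{a_1'}$ and $u_2(b)-u_2(\bb)=\lambda_2p^{a_2}$, one gets $\lambda_1\equiv\lambda_2\bmod p^{a_1-a_1'}$. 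Your blanket claim that $u_1(\bb)\equiv u_1(b)\bmod p^{a_1}$ for \emph{all} $\bb\in\B_r'$ is therefore false whenever $a_1'<a_1$, and this regime is strictly larger than the coupled situation you single out: your condition $n_1-n_2+o_1'-o_2'=0<a_1$ is exactly $a_1'=0<a_1$, but $0<a_1'<a_1$ also occurs. For instance $(p,m,n_1,n_2,\sigma_1,\sigma_2,o_1,o_2,o_1',o_2',u_1,u_2)=(3,11,16,11,1,1,5,4,4,6,1,1)$ satisfies all conditions of the main theorem, and there $a_1=4$, $a_2=1$, $a_1'=3$: increments of $u_1$ at level $p^{3}$ are tied to increments of $u_2$ at level $p$, so neither your ``uncoupled'' congruence mod $p^{a_1}$ nor your mod-$p$ invariant $u_1(\bb)-d(\bb)$ applies there (the correct invariant is $u_1(\bb)-d(\bb)p^{a_1'}$ mod $p^{a_1}$). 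Your two claims are even mutually inconsistent: in your own coupled case the scaling move changes $u_1(\bb)$ mod $p$, which a congruence mod $p^{a_1}$ with $a_1>0$ forbids. The stated characterization happens to survive because for a lex-minimizer the $p^{a_2}$-digit of $u_2$ vanishes and the coupling then forces $\lambda_1\equiv 0\bmod p^{a_1-a_1'}$; but your argument never makes this restricted use explicit, and as written it rests on a false congruence and on a case division that omits the regime $0<a_1'<a_1$ entirely.

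Second, the attainability half --- ``every unit in the respective residue classes is attainable,'' deferred as routine bookkeeping --- is where the paper spends most of its effort. One must exhibit, in each of the configurations $o_1=0$, $o_2=0<o_1$, $o_1o_2\ne 0$, explicit basis changes realizing the target residues, verify through \Cref{LemmaEnB2} and the results of \Cref{SectionBr'} that the new bases stay in $\B_r'$, and, in the $p=2$ boundary cases $m=n_1$ and $m-n_2=o_1-o_2$, cancel the nonzero terms $A,B_1,B_2$ by correction terms in the chosen exponents (the $\delta$'s in the paper's tables); moreover in the coupled case the reachable pairs $(u_1(\bb),u_2(\bb))$ must be constructed jointly, not coordinatewise. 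Your $C_1,C_2$ are acknowledged but never handled, so the forward implication ($b\in\B_{rt}\Rightarrow$ the stated bounds) is not actually proved. In short: right skeleton, but the precise scope of the coupling is misidentified, and the constructive half of the proof is missing.
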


\begin{proof}
By \Cref{oo'}.\eqref{oo'Sigmas1},  $o_2+o'_1\le m\le n_1$ and if $m=n_1$ then $o_1o_2=0$. Moreover, either $o_1+o'_2(\bb)\le m\le n_2$ or $2m-o_1-o'_2(\bb)=n_2<m$ for every $\bb\in \B_r$.
We will use this without specific mention.  
Let $\tilde \B_{rt}$  denote the  set of the elements in $\B_r'$ which  satisfy  $u_1(b)\le p^{a_1}$ and either  (1) or (2).

It suffices to prove the  following: 

\textbf{(i)} $\tilde \B_{rt}\neq \emptyset$.

\textbf{(ii)} If $b\in \tilde \B_{rt} $, $\bb\in\B_r'$ and $u(\bb)\le_{\lex} u(b)$  then   $u(b)=u(\bb)$.

\textit{Proof of (i)}.
 Start with $b=(b_1,b_2)\in \B_r'$.
 We construct another element $\overline{b}=( b_1^{x_1}b_2^{y_1}, b_1^{x_2}b_2^{y_2})$ with $x_1,y_1,x_2,y_2$ selected as in the Tables \ref{TCase1++}, \ref{TCase2++}   or  \ref{TCase3++}, depending on the values of $o_1$ and $o_2$.  The reader may verify that the conditions of the  \Cref{LemmaEnB2} hold, so   $\overline{b}\in \B_r$. 
 Then   using   congruences    \eqref{t1++} and \eqref{t2++} 
 we verify that in all the cases $o _i '(\bb)=o _i '$, which guarantees that 
 $\bb\in \B_r'$, and that $u_1(\bb)\le p^{a_1}$ and $u (\bb)$ satisfies either (1) or (2) in each case, i.e. $\overline{b}\in \tilde \B_{rt}$.

 (a) Suppose first that $o_1=0$. Write $u_1(b)=\rho +qp^{a_1}$ with $1\leq \rho \leq p^{a_1}$.
 Observe that $p\nmid \rho $, since $p\nmid u_1(b)$ and if $a_1=0$ then $\rho =1$.
 Let $\rho '$ be an integer with $\rho \rho '\equiv 1 \bmod p^m$.
 We select the $x$'s and $y$'s as in Table~\ref{TCase1++}. 

 \begin{table}[h!]
 	$$\matriz{{|l|cccc|}
 		o_1=0  & x_1 & y_1 &  x_2 & y_2  \\\hline
 		a_1=o'_1          & u_2(b) & 0       & 0 & 1 \\ 
 		a_1=o_2  & u_2(b)   & 0   & 0 & \rho 'u_1(b)
 		\\ 
 		a_1=o_2+n_1-n_2+o'_1-o'_2 < o_2 & u_2(b) & -qp^{o_2}  & 0 & 1 \\\hline}$$
 	\caption{\label{TCase1++}  }
 \end{table} 

We verify now that $o'(\bb)=o'$, $u_1(\bb)=\rho $ and $u_2(\bb)=1$, which imply that $\overline{b}\in \tilde \B_{rt}$, as desired.
Indeed, in all the cases $p \nmid y_2$.  Moreover \eqref{t2++} takes the form
 $$y_2 u_2(b)u_2(\bb) p^{m-o_2'(\bb)}\equiv  y_2 u_2(b) p^{m-o'_2 } \bmod
 p^m,$$
hence   $o_2'(\bb)=o_2'$ and  $u_2(\bb)=1$. Now we make use of \eqref{t1++}.
 If $o'_1= a_1$ then $ u_2(b) u_1(\bb) p^{m-o_1'(\bb)} \equiv  u_2(b)u_1(b) p^{m-o_1'}   \bmod p^m$ and hence   $o_1'(\bb)=o_1'$ and  $u_1(\bb)=u_1(b)  \le p^{o'_1}=p^{a_1}$, so that $u_1(\bb)=\rho $.
Suppose that $a_1=o_2$. Then $u_2(b)\rho 'u_1(b)u_1(\bb) p^{m-o_1'(\bb)}\equiv  u_2(b)u_1(b)p^{m-o'_1}\bmod p^m$.
 Thus $o_1'(\bb)=o_1'$ and $u_1(\bb)\equiv \rho  \bmod p^{o'_1}$.
 Since $1\le \rho ,u_1(\bb)\le p^{o'_1}$ we deduce that ${u}_1(\bb)=\rho $.
 Finally, assume that $a_1=o_2+n_1-n_2+o'_1-o'_2< o_2 $. 
 Then $o'_2>o'_1+n_1-n_2$ and hence $o_2\ne 0$, by \Cref{OPrima++}.(1).
 So $p\mid y_1$   and thus  $B_1\equiv 0 \bmod p^m$.
 Hence
 \begin{eqnarray*}
 u_2 (b) u_1(\bb) p^{m-o_1'(\bb)} &\equiv& u_2(b)u_1(b) p^{m-o_1'} -qu_2(b)p^{m-o_2'+o_2+n_1-n_2} \\
 &=& u_2(b)p^{m-o'_1}(u_1(b)-qp^{a_1}) = u_2(b)p^{m-o'_1}\rho  \bmod p^m.
 \end{eqnarray*}
Therefore $o_1'(\bb)=o_1'$ and, arguing as in the previous case, we deduce again $ {u}_1(\bb)=\rho $.

 (b) Suppose now that   $o_2=0<o_1$. In this case we write $u_2(b)=\rho +qp^{a_2}$ with $1\le \rho \le p^{a_2}$. Again $p\nmid \rho $ and we choose an integer $\rho '$ with $\rho \rho '\equiv 1  \bmod p^m$.
 Moreover let
 	$$\delta=\begin{cases} 1, & \text{if } p=2 \text{ and } m-n_2=o_1; \\
 	0, & \text{otherwise}.\end{cases}$$
 Then we take $x_1,x_2,y_1$, and $y_2$   as in \Cref{TCase2++}. We verify now that $o'(\bb)=o'$, $u_1(\bb)=1$ and $u_2(\bb)=\rho $, so that again $\overline{b}\in  \tilde \B_{rt}$.
 \begin{table}[h!]
 	$$\matriz{{|l|cccc|}
 		o_2=0<o_1  & x_1 & y_1 &  x_2 & y_2  \\\hline
 		a_2=	o'_2 & 1 & 0 & 0 & u_1(b)  \\
 		a_2=o'_2-o'_1+\max(0,o_1+n_2-n_1)<o_1 & 1 & 0 &  -qp^{\max(n_1-n_2,o_1)} & u_1(b) \\
 		a_2=o_1    & \rho  'u_2(b) +\delta q 2^{m-1} & 0 & 0 & u_1(b) \\
 		\hline}$$
 	\caption{\label{TCase2++}}
 \end{table}
 
 By \eqref{Cotao}, $m-1\ge o_1> 0$ and therefore in all the cases  $x_1\equiv 1 \bmod p^{o_1}$, so that the conditions in   \Cref{LemmaEnB2}  hold.
 By \eqref{t1++}, $ x_1u_1(b) u_1(\bb) p^{m-o_1'(\bb)}\equiv  x_1u_1(b) p^{m-o_1'} \bmod p^m$, so that $o_1'=o_1'(\bb)$ and $u_1(\bb)=1$.  Next we make use of \eqref{t2++}.
 If $  a_2=o_2'$ then $ u_1(b) u_2(\bb) p^{m-o_2'(\bb)}\equiv u_1(b)u_2 (b)  
 p^{m-o_2'} \bmod p^m$, so that $o_2'(\bb)=o_2'$ and $u_2(\bb)= u_2 (b)=\rho $.
  If $a_2=o'_2-o'_1+\max(0,o_1+n_2-n_1)<o_1$ then $n_1>m$, since otherwise      $m=n_1>n_2=2m-o_1-o_2'$, so $o_1\leq m-o_1'=  o'_2-o'_1+o_1+n_2-n_1 =a_2<  o_1$.
Thus $B_2=0$  and hence
	\begin{eqnarray*}
u_1(b) u_2(\bb) p^{m-o_2'(\bb)} &\equiv& -q p^{\max(0,o_1+n_2-n_1)} u_1(b) p^{m-o_1'}+ u_1(b) u_2(b)p^{m-o_2'} \\
&=& u_1(b)(-qp^{a_2}+u_2(b))p^{m-o'_2}= u_1(b)\rho p^{m-o'_2} \bmod p^m,
	\end{eqnarray*}
so that  once more $o_2'(\bb)=o_2'$ and $u_2(\bb)=\rho $.
Now assume  $a_2=o_1$.  If $p=2$ and $m-n_2=o_1$ then, having in
 mind that $1<v_2(r_1-1)=m-o_1=n_2$, we deduce that $A\equiv q2^{m-1}\bmod 2^m$.  Moreover,  $o_2'=o_2'(\bb) =2m-n_2-o_1 =m$.  Thus
 $$u_2(\bb)  \rho 'u_2(b)u_1(b)  \equiv q2^{m-1}+u_2(\bb) (\rho 'u_2(b)+q2^{m-1})u_1(b)\equiv  u_1(b)u_2(b)   \bmod 2^m.$$
 Otherwise $\delta=A=0$ and again $ \rho ' u_2 (b)  u_1(b)u_2(\bb) p^{m-o_2'(\bb)}\equiv  u_1(b)u_2(b) p^{m-o_2'}   \bmod p^m$.
 Either way $o_2'(\bb)=o_2'$ and $u_2(\bb)=\rho $, as desired.

 (c) Suppose that $o_1o_2\ne 0$  and let
	$$a'_1=n_1-n_2-\max(o_1-o_2,o'_2-o'_1)$$
Then $m<n_1$, by \Cref{oo'}.\eqref{oo'Sigmas1}; $n_2<n_1$, by  \Cref{Fijando-r}.\eqref{CondicionesOs}; $a_1=\min(o'_1,o_2)$, by \Cref{OPrima++}.(3); and $0\le \min(a_1',a_2)$ by the combination of \Cref{Fijando-r}.\eqref{CondicionesOs} and \Cref{OPrima++}.(3). 
Moreover, $a'_1=0$ if and only if $n_1-n_2=o'_2-o'_1$ and, in that case, $a_2=o_1-o_2>0$. Similarly, $a_2=0$ if and only if $o'_2=o'_1$ and, in that case, $a'_1=n_1-n_2-o_1+o_2>0$.  
	
Let $\rho $ and $q$ be integers such that
 $$1\le \rho \le p^{a_2} \qand
 u_2(b)=\rho +qp^{a_2}.$$
  Then define 
 $$(  R _2,q_2)=\begin{cases} 
 (\rho +p^{a_2}, q-1 ), & \text{if } u_1(b) \equiv qp^{a'_1} \bmod p \text{ and }0<a_1; \\
 (\rho ,q), & \text{otherwise}; \\
 \end{cases} 
 $$
 and
$$R= u_1(b)-q_2p^{a_1'}.$$ 
Finally, let $ R _1$ and $q_1$ be integers such that 
 $$1\le  R _1\le p^{a_1} \qand R= R _1 + q_1p^{a_1}.$$
As in the previous cases $p\nmid \rho $.

  \textbf{Claim}: 
$p\nmid R_i$ for $i=1,2$ and if $p\mid R$ then $a'_1=o'_1=0$. 

Indeed, as, $p\nmid u_1(b)$, if $p\mid R$ then $a'_1=0$ and hence $u_1(b)\equiv q_2 \bmod p$. Then $a_1=0$ by the definition of $q_2$ and thus $o'_1=0$.  This proves the last statement of the claim.
If $p\mid R_1$ then $a_1>0$ and hence $p\mid R$ so that also $a'_1=0$, and therefore $u_1(b)\equiv q_2 \bmod p$, in contradiction with the definition of $q_2$.
Finally, if $p\mid R_2$ then $u_1(b)\equiv qp^{a_1'}\bmod p$, $a_1>0$ and $a_2=0$ so that $a'_1>0$ and hence $p\mid u_1(b)$, yielding a contradiction.  This finishes the proof of the claim.

Therefore there are integers $ R' _1, R' _2 $ with $ R _i R' _i\equiv  1 \bmod p^m$, for $i=1,2$  and if $a'_1\ne 0$  or $o'_1\ne 0$
then there is another integer $R'$ such that $RR'\equiv 1 \bmod p^m$.
 Observe that $u_2(b)=R_2+q_2p^{a_2}$ and hence $R'_2u_2(b)\equiv 1 + R'_2q_2p^{a_2} \bmod p^m$.
 
We take    $x_1, x_2, y_1$ and $y_2$ as in  \Cref{TCase3++}  with 
$$\delta=\begin{cases} 1, & \text{if } p=2 \text{ and } m-n_2=o_1-o_2; \\
0, & \text{otherwise}.\end{cases}$$ 
In  all cases it is straightforward that  the  conditions of \Cref{LemmaEnB2} hold and that  $p\nmid y_2$.
 We will prove that $o'(\bb)=o'$ and $u_i(\bb)=R_i$ for $i=1,2$. 
 Then it is straightforward to verify that $\overline{b}\in \tilde \B_{rt}$, because if $R_2>p^{a_2}$ then $u_1(b)\equiv qp^{a'_1} \bmod p $, $0<a_1$, $q_2=q-1$ and $1+p^{a_2}\le R_2=\rho +p^{a_2}\le 2p^{a_2}$. Hence $a'_1=0$, so $u_1(b)\equiv q \bmod p$, and therefore $R_1=u_1(b)-(q-1) \equiv 1 \bmod p$.
 
 \begin{table}[h!]
 	$$\matriz{{|l|cccc|}
 		o_1o_2\ne 0    & x_1 & y_1 &  x_2 & y_2  \\\hline
 		a_2=o_2'-o_1' , \ a_1=o_2    & 1  &  0 & - R _1'q_2 p^{n_1-n_2}  &  R _1'u_1(b) \\ 
 		a_2=o_2'-o_1' , \ a_1=o_1'   & 1  &  0 & -R 'q_2p^{n_1-n_2} & R 'u_1(b) \\ 
 		a_2\ne o_2'-o_1', \ a_1=o_2  &  R _2'u_2(b) +\delta q_22^{o_2'-1}  & - R _2'q_2- \delta q_22^{o_2'-1-o_1 +o_2}&  0 &  R _1'R\\   
 		a_2\ne o_2'-o_1', \ a_1=o_1'   &  R _2'u_2(b)+ \delta q_22^{o_2'-1 } & 
 		- R'_2q_2 - \delta q_22^{o_2'-1-o_1 +o_2}&  0 & 1 \\   
 		\hline}$$ 
 	\caption{\label{TCase3++}}
 \end{table} 

 We will use \eqref{t1++} and \eqref{t2++}. 
Observe that $B_1=B_2=0$ because $m<n_1$. 
 
 Assume that $a_2=o_2'-o_1'$. 
 Then $a_1'=n_1-n_2+o_2-o_1>0$. Hence $u_1(b)\not\equiv q_2p^{a'_1} \bmod p$ so that $ R _2=\rho $ and
 $q_2=q$.
 Then $y_2\equiv  R' _1u_1(b)\bmod p^{o'_1}$.
 By \eqref{t1++}, $ y_2 u_1(\bb) p^{m-o_1'(\bb)}\equiv u_1 (b)p^{m-o_1'}\bmod p^m$, thus $o_1'(\bb)=o_1'$ and
 $R _1'u_1(b)u_1(\bb)\equiv u_1(b)\bmod p^{o_1'}$.
 This implies that  $R_1\equiv u_1(\bb) \bmod p^{o'_1}$. Hence, as $1\le R_1,u_1(\bb)\le p^{o'_1}$, we deduce that $u_1(\bb)= R _1$.
 Moreover, by \eqref{t2++},
	$$y_2 u_2(\bb) p^{m-o_2'(\bb)}\equiv
	x_2 u_1(b) p^{m-o_1' + n_2-n_1}+  y_2 u_2 (b)  p^{m-o_2'}\bmod p^m.$$
 Substituting $x_2$ and $y_2$ and multiplying in both sides by $ R _1$ if $a_1=o_2$ or by $R$ if $a_1=o_1'$, we obtain that  
	$$u_1(b) u_2(\bb) 2^{m-o_2'(\bb)}\equiv -  q_2 u_1(b) p^{m-o_1'} +u_2(b) u_1(b)p^{m-o_2'}\bmod p^m,$$
that is,
	$$u_2(\bb) p^{m-o_2'(\bb)}\equiv (  -q_2 p^{a_2}+u_2(b))p^{m-o_2'}= R _2 p^{m-o_2'} \bmod p^m.$$
Therefore $o_2'(\bb)=o_2'$ and $ u_2(\bb )=  R _2$.
 
Otherwise $0<a_2=o_1-o_2<o'_2-o'_1$ and  $a_1'=n_1-n_2+o_1'-o_2'$.
In particular, $a_2<o'_2$ and $o_1'<o_2'-1$.
Thus   $ \delta 2^{m-o_1'+o_2'-1} \equiv 0 \bmod p^m$.
 We consider separately the two options for $\delta$. 
 
 Suppose that  $\delta=0$.   Then  $A=0$   and by \eqref{t2++}
 $ y_2   R _2'u_2 (b)u_2(\bb)p^{m-o_2'(\bb)} \equiv 
  y_2 u_2(b) p^{m-o_2'}\bmod p^m$. Hence
 $o_2'(\bb)=o_2'$ and $   u_2(\bb)\equiv  R _2\bmod p^{o'_2}$.
 Moreover $1\le  R _2\leq 2p^{a_2}\leq p^{o_2'}$ and  $ 1\le  u_2 (\bb) \leq p^{o_2'}$, so that $u_2(\bb) = R _2$.    
 Furthermore, by \eqref{t1++},
	$$R_2'u_2 (b)y_2 u_1(\bb) p^{m-o_1'(\bb)}\equiv  R _2'u_2 (b)  u_1 (b)  p^{m-o'_1} -  R _2'q_2u_2(b)p^{m-o_2'+n_1-n_2}\bmod p^m,$$
hence
 \begin{equation}\label{EqRevision}
  y_2 u_1(\bb) p^{m-o_1'(\bb)}\equiv ( u_1(b)  - q_2 p^{a_1'}) p^{m-o_1'} \equiv Rp^{m-o_1'}\bmod p^m.
 \end{equation}
If $a_1=o_2$ then $o_1'\geq o_2>0$, so that $p\nmid R$, by the Claim.
Then \eqref{EqRevision} takes the form $R_1'R u_1(\bb) p^{m-o_1'(\bb)}\equiv R p^{m-o_1'}\bmod p^m$.
If $a_1=o_1'$ then $R=R_1+q_1 p^{o_1'}$ so that \eqref{EqRevision} takes the form $u_1(\bb) p^{m-o_1'(\bb)} \equiv R_1 p^{m-o_1'}\bmod p^m$.
Either way it follows that $o_1(\bb)=o_1' $ and $u_1(\bb) \equiv R_1 \bmod p^{o_1'}$; thus,   as  $R_1\leq p^{a_1}\leq p^{o_1'}$ and $u_1(\bb)\leq p^{o_1'}$,  we conclude that $u_1(\bb)=R_1$.

Finally assume $\delta=1$, i.e., $p=2$ and $m-n_2=o_1-o_2 $.  
Then  $n_2=2m-o_1-o_2'=2m-o_1-o_2'(\bb)$,  by \Cref{oo'}.\eqref{oo'Sigmas1},  so  that  $o_2'=o_2'(\bb)=m-o_2$. 
Since  $0<o_1-o_2<o'_2-o'_1$, $m-o_1'+o_2'-1>m$.
Furthermore, by \Cref{Fijando-r}.\eqref{CondicionesOs}, $m-1-o_1+o_2 +n_1-n_2\geq m$. Thus   congruence \eqref{t1++} implies
\begin{eqnarray*}
R _2'u_2(b) y_2 u_1(\bb) 2^{m-o_1'(\bb)} &\equiv&  R _2'u_2(b) u_1(b) 2^{m-o_1'}- R _2'q_2u_2(b) 2 ^{m-o_2'+n_1-n_2} \\
&=& R'_2u_2(b)2^{m-o'_1}(u_1(b)-q_22^{a'_1})=R'_2Ru_2(b)2^{m-o'_1} \bmod  2 ^m,
\end{eqnarray*}
i.e. $y_2u_1(\bb)2^{m-o'_2(\bb)}\equiv R2^{m-o'_1}\bmod 2^m$, and arguing as in  the previous paragraph we obtain again that  $o_1' =o_1'(\bb)$ and $u_1(\bb)=R_1$.
On the other hand $a_2+n_2-1=m-1$ and, as $p=2$ and $m\ge 2$, by \Cref{Cotasoo'}.\eqref{CotaOs}, $n_2+o_2'-2=2m-o_1-2\geq m$.
Thus
\begin{eqnarray*}
A&=& (R_2' u_2(b) +q_2 2^{o_2'-1} -1) y_2 2^{n_2-1} \\
&\equiv& ( R_2' q_2 2^{a_2} + q_2 2^{o_2'-1}) y_22^{n_2-1}
\equiv y_2 R_2' q_2 2^{a_2+n_2-1} + y_2q_2 2^{n_2+o_2'-2} \equiv q_2 2^{m-1}\bmod 2^m.
\end{eqnarray*}
Moreover $y_2\delta q_22^{o'_2-1}2^{m-o'_2}\equiv q_22^{m-1} \bmod 2^m $.
Hence, by \eqref{t2++},
$$R _2'u_2(b)y_2u_2(\bb) 2^{m-o_2'} \equiv  q_22^{m-1}+x_1y_2u_2(\bb)2^{m-o'_2(b)} \equiv y_2 u_2(b) 2^{m-o'_2} \bmod 2^m.$$
 So arguing as in the previous case  one obtains  that $ u_2(\bb)= R _2$.  
This finishes the proof of (i).

 \medskip
 
 \textit{Proof of (ii)}.  
Take $b \in \tilde \B_{rt}$ and $\overline b\in \B_r'$ such that  $u(\bb)\leq u(b)$. Then $o'_i(b)=o'_i(\bb)$ and $\overline b_i=b_1^{x_i}b_2^{y_i} [b_2,b_1]^{z_i}$ for some integers $x_i, y_i,z_i$ satisfying the conditions in \Cref{LemmaEnB2} and  congruences \eqref{t1++} and \eqref{t2++}, for $i=1,2$.

 (1) Suppose first that $o_1=0$.  
 Then  $a_2=0$ and  $u_i(b)   \leq p^{a_i}$. Thus   $1=u_2(b)=  u_2(\bb) $  and $ 1\le u_1(\bb) \le   u_1(b)\leq p^{a_1}$.  
 If   $a_1=0$  then  $1=u_1(b)=  u_1(\bb)$. Assume otherwise.    As $a_1\le o_2$, by \Cref{LemmaEnB2} we   have that $y_1\equiv y_2-1 \equiv 0 \bmod p^{ a_1 }$,  so $B_1\equiv 0\bmod p^{ a_1+n_1-1}$, in particular  $B_1\equiv 0\bmod p^m$, and hence  \eqref{t1++} implies $  u_1(\bb) x_1 \equiv x_1 u_1(b) \bmod  p^{a_1}$, while $x_1\equiv x_1y_2-x_2y_1\not\equiv  0\bmod p$, so $u_1(\bb)\equiv u_1(b) \bmod p^{a_1}$.
 Therefore $ u_1(\bb)=u_1(b)$.

 (2) Assume now that $o_2=0  <o_1$.  
 Then   $a_1=0$ and  $n_2<n_1$ by \Cref{Fijando-r}.\eqref{CondicionesOs}.
 Moreover, $(u_2(\bb),u_1(\bb)) \leq_{\lex} (u_2(b),u_1(b))$,  $u_2(b) \leq  p^{a_2}$  and  $u_1(b) =1$.
 Hence it suffices to prove that $u_2(b)=u_2(\bb)$. 
 Moreover   $p^{\max(o_1,n_1-n_2)} \mid x_2$. 
 We assert that $A\equiv B_2\equiv 0\bmod p^m$ or $a_2<o_2'$.
 Otherwise,  $p=2$ and, by the definition of $a_2$,  $o_2'\leq o_1$.
 If $A\not\equiv 0\bmod 2^m$ then $0 <  o_1=m-n_2=-m+o_1+o_2'$,  by \Cref{oo'}.\eqref{oo'Sigmas1},  so that $o_1<m=o_2'\le o_1$, a contradiction.
  If  $B_2\not\equiv 0 \bmod 2^m$  then  $m=n_1$ and $0 <  o_1\le v_2(x_2)\leq n_1-n_2$, by \Cref{LemmaEnB2}. Thus, again by \Cref{oo'}.\eqref{oo'Sigmas1}, $o_1\leq m-n_2=-m+o_1+o_2'$,  yielding  the contradiction $m \leq o_2'\leq o_1 <m $. This proves the assertion.
  If $A\equiv B_2 \equiv 0 \mod p^m$ then dividing by $p^{m-o'_2}$ in \eqref{t2++}, with the help of \Cref{LemmaEnB2} and \Cref{OPrima++}, the reader may verify that  $u_2(\bb) y_2\equiv y_2 u_2(b) \bmod p^{o'_2}$ and hence also $u_2(\bb) y_2\equiv y_2 u_2(b) \bmod p^{a_2}$. Otherwise, $p=2$ and $0\le a_2<o'_2$ and hence $m-o'_2\le \min(v_2(A),v_2(B_2))$. Thus the same argument shows that $u_2(\bb) y_2\equiv y_2 u_2(b) \bmod p^{a_2}$. Since $y_2\equiv x_1y_2-x_2y_1 \not\equiv 0 \bmod p$,  $u_2(\bb)\equiv u_2 (b)\bmod p^{a_2}$, so $u_2(b)=u_2(\bb)$  as desired.
 
 (3) Assume that $o_1o_2\neq 0$. Then by \Cref{LemmaEnB2},
$$x_1= 1+ x_1' p^{o_1-o_2}, \qquad   x_2=x_2' p^{n_1-n_2}, \qquad 
 y_1=-x_1' -y_1'p^{o_1}, \qquad   y_2=1-x_2'p^{n_1-n_2+o_2-o_1} +y_2' p^{o_2},
$$
 for  some integers $x_1',x_2',y_1',y_2'$. 
By \Cref{Fijando-r}.\eqref{CondicionesOs}, $0<\min (o_2,o_1-o_2, n_1-n_2+o_2-o_1)$, so clearly $p\nmid x_1 y_2$ and, by \Cref{OPrima++}.(3), $a_1=\min(o'_1,o_2)$.  Moreover, by \Cref{Cotasoo'}.(4),  $m<n_1$, so that $B_1=B_2=0$.  Thus \eqref{t1++} and \eqref{t2++} take the forms
 \begin{align*}
 u_1(\bb) \left(x_1(1+y_2' p^{o_2}) + x_2' p^{n_1-n_2-o_1+o_2}(y_1'  p^{o_1}-1) \right) \equiv  x_1 u_1(b)  - (x_1'+y_1' p^{o_2}) u_2 
 (b)p^{n_1-n_2+o_1'-o_2'} & \bmod p^{o_1'}, \\
 A p^{o_2'-m}+  u_2(\bb)\left (y_2 +  x_1' p^{o_1-o_2}(1+y_2'  p^{o_2}) +x_2'y_1' p^{n_1-n_2 +o_2} \right)  \equiv  u_1(b) x_2' p^{o_2'-o_1'}
 + u_2(b)y_2  & \bmod p^{o_2 '}.
 \end{align*}
The first congruence implies that $u_1(b)\equiv  u_1(\bb) \bmod p^{\min (a_1,a_1')}$  and the second one that $Ap^{o'_2-m}+y_2u_2(\bb)\equiv y_2u_2(b) \bmod p^{a_2}$.
Suppose that $u_2(\bb)\not\equiv u_2(b)\bmod p^{a_2}$.
Then $p^{a_2}\nmid Ap^{o_2'-m}$ and hence $p=2$ and $m-n_2=o_1-o_2>0$.
Thus $2m-o_1-o'_2=n_2$,  by \Cref{oo'}.\eqref{oo'Sigmas1}, so that $o_2+o'_2=m$ and hence $a_2\le o_1-o_2\le m-1-o_2=o'_2-1$. As $2^{m-1}\mid A$, it follows that $2^{a_2}\mid A2^{o'_2-m}$, a contradiction.
 Therefore,
$u_2(b)\equiv   u_2(\bb) \bmod p^{a_2}$.
If $a_1\leq a_1'$ then $n_1-n_2+o'_1-o'_2\ne 0$ or $a_1=0$ and hence $1\leq u_i(b) \leq p^{a_i}$.  
Therefore 
$u_2(\bb)=u_2(b)$ and $u_1(\bb)=u_1(b)$, and we are done.   

So we can assume $a_1'<a_1$. Fix integers $\lambda_1$ and $ \lambda_2$ such that $u_1(b)=  u_1(\bb)+\lambda_1 p^{a_1'}$ and $u_2(b)=  u_2(\bb)+\lambda_2 p^{a_2}$.
 So  the congruences above can be rewritten as  
	$$u_1(\bb) \left(  x_2' p^{n_1-n_2-o_1+o_2-a_1'}(y'_1p^{o_1}-1) \right.
	\left. + x_1 y_2' p^{o_2-a_1'} \right)
	\equiv  x_1 \lambda_1   - (x_1'+y_1' p^{o_2}) u_2(b) p^{n_1-n_2+o_1'-o_2'-a_1'} \bmod p^{o_1'-a_1'},$$
and
	$$A p^{o_2'-m-a_2}+  u_2(\bb)\left(  x_1' p^{o_1-o_2-a_2}(1+y'_{ { 2}}p^{o_2})  \right.
	 \left. + x_2'y_1' p^{n_1-n_2 +o_2-a_2} \right)   \equiv  u_1(b) x_2' p^{o_2'-o_1'-a_2 } + y_2 \lambda_2     \bmod p^{o_2 '-a_2}.$$
Note that $o_1'-a_1'< o_2'-a_2$, since $a_1'-a_2= n_1-n_2+o_ 2 -o_ 1  +o_1'-o_2' > o_1'-o_2'$.
 This,   together with $a'_1<a_1=\min(o_2,o'_1)$, $o_1-a_2\ge o_2$  and $n_1-n_2-a_2=n_1-n_2-\min(o_1-o_2,o'_2-o'_1)\ge a'_1\ge 0$ implies that
 \begin{align*}
 -	  u_1(\bb)     x_2' p^{n_1-n_2-o_1+o_2-a_1'}    &\equiv  (1+x_1' p^{o_1-o_2}) \lambda_1   -  x_1'  u_2(b) p^{n_1-n_2+o_1'-o_2'-a_1'} \bmod p^{a_1-a_1'} \\
 u_2(\bb)  x_1' p^{o_1-o_2-a_2}    &\equiv  u_1(b) x_2' p^{o_2'-o_1'-a_2 } +(1-x_2' p^{n_1-n_2+o_2-o_1})\lambda_2  \bmod p^{a_1-a_1'}.
 \end{align*} 
 If $a_2=o_1-o_2$, then $a_1'=n_1-n_2 +o_1'-o_2'$, and
 \begin{align*}
 -  u_1(\bb)     x_2' p^{o_2'-o_1'-o_1+o_2 }    &\equiv  (1+x_1' p^{o_1-o_2}) \lambda_1   -  x_1'  u_2(b)  \bmod p^{a_1-a_1'} \\
 u_2(\bb)  x_1'     &\equiv  u_1(b) x_2' p^{o_2'-o_1'-o_1+o_2 } + (1-x_2' 
 p^{n_1-n_2+o_2-o_1}) \lambda_2  \bmod p^{a_1-a_1'}.
 \end{align*}
 By substracting the second  congruence to the first one and simplifying, 
 we obtain that $$ 0\equiv (\lambda_1-\lambda_2)(1+x_1' p^{o_1-o_2} -x_2' p^{n_1-n_2-o_1+o_2}) \bmod p^{a_1-a_1'}.$$
 Similarly, if $a_2=o_2'-o_1'$ it must be $a_1'=n_1-n_2+o_2-o_1$, so 
 \begin{align*}
 -	  u_1(\bb)     x_2'     &\equiv  (1+x_1' p^{o_1-o_2}) \lambda_1   -  x_1'  u_2(b) p^{o_1-o_2+o_1'-o_2' } \bmod p^{a_1-a_1'} \\
 u_2 (\bb) x_1' p^{o_1-o_2-o_2'+o_1'}    &\equiv  u_1(b) x_2'   +(1-x_2' p^{n_1-n_2+o_2-o_1})\lambda_2  \bmod p^{a_1-a_1'},
 \end{align*}
 and consequently once again
 $$ 0\equiv (\lambda_1-\lambda_2) (1+x_1' p^{ {o_1 }- {o_2 }} -x_2'p^{n_1-n_2-o_1+o_2})  \bmod p^{a_1-a_1'} .$$
 Either way $\lambda_1 \equiv \lambda_2 \bmod p^{a_1-a_1'}$.
 Fix an integer $q_1$ such that  $\lambda_1 = \lambda_2 +q_1 p^{a_1-a_1'}$.
 
 Moreover $1\le u_2(\bb) \leq  u_2(b)\le 2p^{a_2}$ and, as $u_2(b)= {u}_2(\bb)+\lambda_2 p^{a_2}$ it follows that $\lambda_2\in \{0, 1\}$. 
 We claim that $\lambda_2=0$. 
 Otherwise   $1+p^{a_2}\leq u_2(b)\leq 2p^{a_2}$, 
 $1  \leq u_2(\bb) \leq  p^{a_2}$, $\lambda_2= 1$ and $u_1(b)\equiv 1 \bmod p$.   Therefore condition (2) holds,  and hence $u_1(b)=u_1(\bb)+\lambda_2 + q_1p^{a_1} \equiv  {u}_1(\bb)+1\bmod p$. Therefore $u_1(\bb)\equiv 0 \bmod p$,  in  contradiction with the fact that $  u_1(\bb)$ is not a multiple of $p$.
Therefore $\lambda_2=0$, so that $u_2(b)=u_2(\bb)$, $p^{a_1-a'_1}\mid \lambda_1$ and $u_1(b)\equiv u_1(\bb)\bmod p^{a_1}$, which implies that $u_1(b)= {u}_1(\bb)$ because $1\le {u}_1(\bb)\leq  u_1(b)  \le p^{a_1}$.
\end{proof}

 \begin{lemma}\label{ues-}   Suppose that  $\sigma_1=-1$ and let $b\in \B_r'$. Then $b\in \B_{rt} $ if and only if one of the following conditions holds:
  \begin{enumerate}
  	\item $\sigma_2=-1$ or $m\leq n_2$.
 	\item  $o'_1=0$ and either $o_1=0$ or $o_2+1\ne n_2 $.
 	\item $o_1'=1$, $o_2=0 $ and $ n_1-n_2<o_1$. 
 	\item $  u_2(b) \leq  2^{m-n_2}$.
	\end{enumerate}
 
 \end{lemma}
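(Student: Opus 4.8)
The plan is to mirror the proof of \Cref{ues+}, exploiting that $\sigma_1=-1$ trivializes $u_1$. Since $\sigma_1=-1$, \Cref{Fijando-r} gives $p=2$ and \Cref{Cotasoo'}.\eqref{Cotao'G} gives $u_1(b)=1$ for every $b\in\B_r$; hence $u(b)=(u_2(b),1)$ and $b\in\B_{rt}$ if and only if $u_2(b)$ is minimal over $\B_r'$. I first dispose of condition (1). If $\sigma_2=-1$ then $u_2(b)=1$ for all $b$ by \Cref{Cotasoo'}.\eqref{Cotao'G}; and if $\sigma_2=1$ with $m\le n_2$ then $o'_2(b)\le 1$ by \Cref{Cotasoo'}.\eqref{sigmasDistintosm>=}, so the bound $1\le u_2(b)\le 2^{o'_2(b)}$ together with $2\nmid u_2(b)$ forces $u_2(b)=1$. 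In either case $u_2$ is constant on $\B_r'$, so $\B_{rt}=\B_r'$ and (1) alone suffices. From now on I assume $\sigma_2=1$ and $m>n_2$ (so $n_1>n_2$ and $o'_2=m-n_2+1>1$ by \Cref{Cotasoo'}.\eqref{sigmasDistintosm<}), and must show $b\in\B_{rt}$ iff one of (2),(3),(4) holds.

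The next step records the two-value structure of $u_2$. By \Cref{Cotasoo'}.\eqref{sigmasDistintosm<} every $u_2(b)$ satisfies $u_2(b)(1+2^{m-o_1-1})\equiv 1\pmod{2^{m-n_2}}$ and $1\le u_2(b)\le 2^{m-n_2+1}$; since the multiplier is odd, hence a unit modulo $2^{m-n_2}$, all these values are congruent modulo $2^{m-n_2}$ and so take at most the two values of the range differing by exactly $2^{m-n_2}$, a \emph{small} one in $[1,2^{m-n_2}]$ and a \emph{large} one in $[2^{m-n_2}+1,2^{m-n_2+1}]$. Thus $b\in\B_{rt}$ if and only if $u_2(b)$ is the small value, which is precisely condition (4)---provided that value is realized. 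The whole problem therefore reduces to deciding, through the transition congruence \eqref{t2-+n2<} of \Cref{tt-+}\,(2), exactly when $u_2$ can be lowered by $2^{m-n_2}$.

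Following the template of \Cref{ues+}, let $\tilde\B_{rt}$ be the set of $b\in\B_r'$ satisfying one of (2),(3),(4), and prove (i) $\tilde\B_{rt}\ne\emptyset$ and (ii) for $b\in\tilde\B_{rt}$ and $\bb\in\B_r'$, $u(\bb)\le_{\lex}u(b)$ implies $u(\bb)=u(b)$; since (2) and (3) depend only on the fixed invariants (hence hold for all or for no $b\in\B_r'$), membership in $\tilde\B_{rt}$ depends on $b$ only through $u(b)$, so (i) and (ii) give $\B_{rt}=\tilde\B_{rt}$ exactly as in \Cref{ues+}. For $b,\bb\in\B_r'$ one has $o'_1(\bb)=o'_1(b)=o'_1\in\{0,1\}$, so a decrease $u_2(\bb)-u_2(b)=-2^{m-n_2}$ turns the left side of \eqref{t2-+n2<} into $2^{m-1}+A$ and, writing $x_2=2^{n_1-n_2}x'_2$, the right side into $x'_2\,2^{m-o'_1}+B$; the decrease is realizable iff
\begin{equation*}
2^{m-1}+A\equiv x'_2\,2^{m-o'_1}+B\pmod{2^m}
\end{equation*}
is solvable with exponents obeying \Cref{LemmaEnB2}. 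The local facts driving the analysis are: if $o'_1=0$ the right side collapses to $B$; if $o_2=0<o_1$ then \Cref{LemmaEnB2}.\eqref{B2os} forces $2^{o_1}\mid x_2$, whence $x'_2$ is even as soon as $o_1>n_1-n_2$; and the congruences $x_1\equiv1$, $x_2\equiv0\pmod{2^{o_1}}$ confine $A$ to a controlled residue class. Carrying this out shows that under (2) (so $o'_1=0$ and either $o_1=0$ or $o_2+1\ne n_2$, forcing $A=B=0$) and under (3) (where one first checks $n_2\ge2$, forcing $A=0$, and that $x'_2$ is even, forcing the right side $\equiv0$) the displayed congruence collapses to $2^{m-1}\equiv0$, which is impossible; hence $u_2$ is constant on $\B_r'$ and every basis lies in $\B_{rt}$. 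This gives the ``if'' direction for (2) and (3), and (4) is immediate from the previous paragraph.

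For the converse I assume (1),(2),(3) all fail and, for a basis $b$ with the large value, construct $\bb\in\B_r'$ realizing the small value by choosing $x_1,y_1,x_2,y_2$ (via a table, as in \Cref{ues+}) so that \eqref{t2-+n2<} delivers the summand $-2^{m-1}$ while \Cref{LemmaEnB2} and $o'(\bb)=o'$ are preserved; this certifies (i) and shows that a basis with $u_2(b)>2^{m-n_2}$ is not in $\B_{rt}$, forcing (4). I expect the main obstacle to be exactly this case analysis around the correction terms $A$ and $B$: for each configuration of $(o'_1,o_1,o_2,n_1,n_2)$ surviving the removal of (2) and (3) one must locate a choice of exponents making $A$ or the $x'_2$-term contribute $2^{m-1}$, while respecting both the divisibility $2^{o_1}\mid x_2$ when $o_2=0<o_1$ and the $o'_1$-transition criterion of \Cref{tt-+}\,(2) (namely $o'_1(b)\ne o'_1(\bb)$ iff $2\nmid y_1$ and $n_1=o_1+1$, which must be suppressed to keep $\bb\in\B_r'$). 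It is this interplay---the vanishing of $A$ unless $o_2+1=n_2$, the parity of $x'_2$, and the blocking of the $o'_1$-transition---that makes conditions (2) and (3) emerge as precisely the obstructions to lowering $u_2$.
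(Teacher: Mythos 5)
Your setup and the first half of the argument track the paper closely and are sound: the reduction to minimizing $u_2$ alone (since $u_1(b)=1$ throughout), the disposal of condition (1), the two-value structure $u_2(b)\in\{v,v+2^{m-n_2}\}$, and the ``collapse'' argument showing that under (2) or (3) the congruence \eqref{t2-+n2<} for a putative drop of $2^{m-n_2}$ reduces to $2^{m-1}\equiv 0 \pmod{2^m}$, so that $u_2$ is constant on $\B_r'$ and $\B_{rt}=\B_r'$. This is exactly the easier direction of the paper's proof (there phrased as: if $u_2(b)\ne u_2(\bb)$ for some $\bb\in \B_{rt}$ then neither (2) nor (3) holds), and your derivations of $A=B=0$ in each case---including $n_2\ge 2$ under (3) via \Cref{Cotasoo'}.\eqref{CotaOs}, and the parity of $x_2'$ killing the right-hand side---are correct.

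The gap is the other direction, which you announce but never prove: when (1), (2), (3) all fail, you must actually exhibit, for a basis $b$ with $u_2(b)=v+2^{m-n_2}$, a basis $\bb\in\B_r'$ with $u_2(\bb)=v$ (equivalently, prove your claim (i) that $\tilde\B_{rt}\ne\emptyset$ in this regime). Your text stops at ``I expect the main obstacle to be exactly this case analysis''---but that case analysis \emph{is} the proof of this direction, and it is where the paper does most of its work. The paper produces explicit bases in each surviving configuration: for $o_1'=1$, the bases $(b_1^{1-2^{o_1}},b_1^{2^{o_1}}b_2)$ (when $o_2=0<o_1$, $n_1=o_1+1$, $n_2=1$), $(b_1,b_1^{2^{n_1-n_2}}b_2)$ (when $o_1=0$, or $o_2=0<o_1\le n_1-n_2$), and $(b_1,b_1^{2^{n_1-n_2}}b_2^{1-2^{n_1-n_2-o_1+o_2}})$ (when $o_1o_2\ne 0$); and for $o_1'=0$ with $o_2+1=n_2$ and $o_1>0$, the basis $(b_1^{1-2^{o_1-o_2}}b_2^{2^{o_1-o_2}},b_2)$. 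Each choice must be verified against \Cref{LemmaEnB2} (membership in $\B_r$), against \Cref{tt-+}.(2) together with \Cref{OPrima-+} (that $o_1'(\bb)=o_1'$, so $\bb\in\B_r'$; this is where the parity of $y_1$ and the condition $n_1=o_1+1$ must be controlled), and then \eqref{t2-+n2<} must be evaluated to show that the right-hand side minus $A$ is $\equiv 2^{m-1}$, forcing $u_2(\bb)\ne u_2(b)$. Without this, your argument leaves open the possibility that in some configuration with (2) and (3) failing, $u_2$ is constant equal to the \emph{large} value, in which case $\B_{rt}=\B_r'$ while none of (1)--(4) holds---precisely what the lemma's ``only if'' rules out. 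So as written the proposal establishes sufficiency of (1)--(4) but not their necessity.
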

 
 \begin{proof} 
By \Cref{Fijando-r}.\eqref{ImparSigma1} and \Cref{oo'}.\eqref{oo'SigmasDistintas},   $o'_1\le 1$ and $u_1(b)=1$ for each $b\in \B_r'$.   Also by \Cref{oo'}.\eqref{oo'SigmasDistintas}, if $\sigma_{2}=-1$ or $m\le n_2$ then   $o_2' \leq 1$ and $u_2(b)=1$ for each $b\in \B_r'$. In  that case $\B_r'=\B_{rt}$, so we shall assume otherwise, i.e., $\sigma_2=1$  and $n_2<m$.
Then, once more by \Cref{oo'}.\eqref{oo'SigmasDistintas}, $n_2=m-o_2' +1 <m$ and   $u_2(b)\in \{v,v+2^{m-n_2}\}$, where $v$ is the unique integer satisfying $1\le v\le 2^{m-n_2}$ and $v(1+2^{m-o_1-1})\equiv -1\bmod  2^{m-n_2}$.

We argue as in the proofs in \Cref{SectionBr'}, namely we use several  $\bb=(b_1^{x_1}b_2^{y_1}[b_2,b_1]^{z_1},b_1^{x_2}b_2^{y_2}[b_2,b_1]^{z_2})\in \B'_r$ to compare $u(b)$ and $u(\bb)$ with the help of   \eqref{t2-+n2<}.
Now it is necessary to have  \Cref{LemmaEnB2} and  \Cref{OPrima-+}   in mind to verify that the different $\bb$ constructed belong to $\B'_r$.
  Moreover, $b\in \B_{rt}$ if and only if    $u_2(b)\le u_2(\bb)$  for every $\bb\in \B'_r$
Observe that (4)  is equivalent to   $u_2(b)=v$ and in that case obviously $u_2(b)\le u_2(\bb)$. 
Thus  we may assume that $u_2(b)\ne v$ and we have to prove that  $u_2(b)=u_2(\bb)$ for every $\bb\in\B'_r$  if and only if   either (2) or (3) holds.

Suppose that  $u_2(b)=u_2(\bb)$  for every $\bb\in\B'_r$. 
 We consider separately the two possible values of $o'_1$. 
Firstly   assume    that  $o_1' =1  $.  
  If $o_2=0<o_1$, $n_1=o_1+1$ and $n_2=1$   and we take  $\bb=(b_1^{1-2^{o_1}}, b_ 1^{2^{ o_1 }} b_2)\in \B_r$ then $o'_1(\bb)=o'_1(b)$, by \Cref{tt-+}.(2), so that   $\bb\in \B_r'$  by  \Cref{OPrima-+}.(2),  and  \eqref{t2-+n2<}   yields the contradiction  
$  2^{m-1}   \equiv 0  \bmod 2^m $,  since    $A=B=2^{m-1}$.
Therefore   $o_1+1<n_1$, $0<o_2$, $o_1=0$ or $1<n_2$. Then $B=0$.
If $o_1=0$ then  take $\bb= (b_1, b_1^{2^{n_1-n_2}} b_2) $; if $o_2=0 <o_1\leq n_1-n_2$  then  take 
$\bb=(b_1 ,b_1^{2^{n_1-n_2}}b_2) $, and if  $o_1o_2\neq 0$  then take $\bb=(b_1,b_1^{2^{n_1-n_2}}b_2^{1-2^{n_1-n_2-o_1+o_2}}) $.  
In any case $\bb\in \B'_r$ by \Cref{tt-+}.(2) and \Cref{OPrima-+}.(2)  and $A\equiv 0\bmod 2^m$  so that  congruence \eqref{t2-+n2<}    yields again the contradiction $2^{m-1}\equiv 0 \bmod 2^m$.   Therefore  $o_2=0 $ and $n_1-n_2<o_1$, i.e., condition (3) holds.
Next suppose $o_1'=0$. 
Then $o_1+1 \ne   n_1$ and $\B_r=\B_r'$ by \Cref{OPrima-+}. The former implies that $B=0$.  Suppose that  $o_2+1=n_2$ and $o_1>0$  and take $\bb=(b_1^{1-2^{o_1-o_2}} b_2, b_2)  $.    By \Cref{Fijando-r}, $o_1\ne o_2$ and hence $\bb\in \B_r'$.
Moreover, $A =  2^{m-1} $, so \eqref{t2-+n2<}   yields once more the contradiction  $  2^{m-1}\equiv 0\bmod 2^m$.
Therefore $o_1=0$ or $o_2+1\neq n_2$, i.e., condition (2) holds.
 	
Conversely, assume   $u_2(b)\ne u_2(\bb)$ for some $\bb\in \B_{rt}$. Therefore  $u_2(b)-u_2(\bb)=2^{m-n_2}$   and we have to prove that neither (2) nor (3)  does hold.   Suppose  that  $o_1'=1$, $o_2=0$ and $n_1-n_2<o_1$. In particular $1<n_2$, by \Cref{Cotasoo'}.\eqref{CotaOs}, and $2^{n_1-n_2+1}\mid x_2$,   by \Cref{LemmaEnB2}, so that $A=B=0 $.   Moreover, $n_1-n_2+1\le o_1\le v_2(x_2)$, by \Cref{LemmaEnB2}, and therefore $x_2 2^{m-o_1' +n_2-n_1} \equiv 0 \bmod 2^m$.
Thus \eqref{t2-+n2<} yields the contradiction $2^{m-1}\equiv 0\bmod 2^m$.   Suppose that   $o_1'=0$,  and either $o_2+1<n_2$ or $o_1=0$. This implies that $A=0$.
Since $o_1'=0$ and $m>n_2$, \Cref{OPrima-+} yields $o_1+1\ne  n_1$. Thus $B=0$. Hence once more \eqref{t2-+n2<} implies the contradiction $2^{m-1} \equiv 0\bmod 2^m$.
 \end{proof}

\section{Proof of the theorem}\label{SeccionDemostracion}

By the arguments given in the introduction, the map associating $\inv(G)$ to the isomorphism class of a 2-generated non-abelian cyclic-by-abelian group of prime-power order $G$ is well defined and injective. So to prove our main result it is enough to show that the image of this map is formed by the lists satisfying the conditions in the theorem.

We first prove that if $G$ is a $2$-generated group of prime-power order and  		
	$$\inv(G)=(p,m,n_1,n_2,\sigma_1,\sigma_2,o_1,o_2,o'_1,o'_2,u_1,u_2)$$ 
then the conditions in the theorem hold. Condition \eqref{1} follows from the definition of $p,m,n_1$ and $n_2$. 
Conditions \eqref{2}, \eqref{3} and \eqref{4} follow from the definition of $\sigma_i$ and $u_i$ and from \eqref{Cotao} and \Cref{Cotasoo'}.  
Condition \eqref{5} is a consequence of \Cref{Fijando-r}.

To prove \eqref{6} and \eqref{7} we fix $b\in \B_{rt} $,  which exists by \Cref{Fijando-r}.\eqref{bsConErres} and the definition of $\B_{rt}$.  Then  $o'_i=o'_i(b)$, $u_i=u_i(b)$.
Suppose that $\sigma_1=1$.  Then  \eqref{6a} follows from \Cref{Fijando-r}.\eqref{sigma1}  and \Cref{Cotasoo'}.\eqref{m>n1}; \eqref{6b} from \Cref{oo'}.\eqref{oo'Sigmas1}; \eqref{6c}, \eqref{6d} and \eqref{6e} follow from \Cref{OPrima++}; and \eqref{6f} and \eqref{6g} follow from \Cref{ues+}. 
This finishes the proof of condition \eqref{6}.
Suppose now that $\sigma_1=-1$. 
 Then  \eqref{7a}    follows from the definition of $\sigma_1$ and \Cref{Cotasoo'}.\eqref{Cotao'}. 
  Suppose that  $\sigma_2=1$. Then $n_2<n_1$, by \Cref{Fijando-r}.\eqref{nsIguales}.   
Therefore \eqref{T-+m>=} follows from \Cref{Cotasoo'}.\eqref{sigmasDistintosm>=} and \Cref{OPrima-+}.(1); and \eqref{T-+m<}  follows from \Cref{Cotasoo'}.\eqref{sigmasDistintosm<}, \Cref{OPrima-+}.(2) and \Cref{ues-}.
Finally,  \eqref{7c}  follows from \Cref{Cotasoo'}.\eqref{Cotao'} and  \Cref{OPrima--}.
This finishes the proof of condition \eqref{7}.
 
To complete the proof of the theorem we need the following lemma, where for integers $m$ and $n$ with $n>0$,  $\lfloor \frac{m}{n} \rfloor$ and $[m]_n$ denote, respectively, the quotient and the remainder of $m$ divided by $n$.

\begin{lemma}\label{Cociclo}
Let $M,N_1,N_2,r_1,r_2,t_1,t_2$ be positive integers satisfying the following conditions:
	\begin{eqnarray}
\label{Relacion-rN}   r_i^{N_i} & \equiv & 1 \bmod M, \\
\label{Relacion-tr}  t_ir_i & \equiv & t_i \bmod M, \\
\label{Relacion-t1r2} \Ese{r_1}{N_1} & \equiv & t_1(1-r_2) \;  \bmod M, \\
\label{Relacion-t2r1} \Ese{r_2}{N_2} & \equiv & t_2(r_1-1) \bmod M.
\end{eqnarray}
Consider the groups $A=\GEN{a}$ and $B=\GEN{b_1}\times \GEN{b_2}$ with $|a|=M$ and $|b_i|=N_i$ for $i=1,2$. 
Then there is a group homomorphism $\sigma:B\rightarrow \Aut(A)$ given by $a^{\sigma(b_i)} = a^{r_i}$ and a $2$-cocycle  $\rho:B\times B\rightarrow A$ 
	given by 	$$\rho(b_1^{x_1} b_2^{y_1},  b_1^{x_2}b_2^{y_2})= a^{ r_2^{y_2} \Ese{r_1}{x_2} \Ese{r_2}{y_1} +   t_1 r_2^{y_1+y_2}   \lfloor \frac{x_1+x_2}{N_1} \rfloor  + t_2 \lfloor \frac{y_1+y_2}{N_2} \rfloor } $$ for $b_1^{x_i}b_2^{y_i}\in B$ with $0\leq x_i<N_1 $ and $0\leq y_i<N_2$, for $i=1,2$.
\end{lemma}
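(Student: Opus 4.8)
The plan is to prove the two assertions separately, establishing $\sigma$ directly and obtaining $\rho$ as the factor set of a concrete group extension, so that the cocycle identity comes for free rather than being checked by hand. For $\sigma$: since $r_i^{N_i}\equiv 1\pmod M$ by \eqref{Relacion-rN}, each $r_i$ is a unit modulo $M$, so $a\mapsto a^{r_i}$ is an automorphism of $A$ of order dividing $N_i$. As $\Aut(A)\cong\U_M$ is abelian, these two automorphisms commute, and the universal property of $B=\GEN{b_1}\times\GEN{b_2}$ yields a homomorphism $\sigma\colon B\to\Aut(A)$ with $a^{\sigma(b_i)}=a^{r_i}$. This step uses only \eqref{Relacion-rN}.

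For $\rho$ the idea is to build an honest group $G$ with $A\trianglelefteq G$, $G/A\cong B$ and conjugation action $\sigma$, and then take $\rho$ to be the factor set of the section $s(b_1^{x}b_2^{y})=\overline{b}_1^{\,x}\overline{b}_2^{\,y}$ (with $0\le x<N_1$, $0\le y<N_2$); any such factor set is automatically a $2$-cocycle with respect to the corresponding action, so nothing about the cocycle identity itself needs verifying. I would construct $G$ as an iterated metacyclic extension. First form $G_2=\GEN{a,\overline{b}_2}$ with $a^M=1$, $a^{\overline{b}_2}=a^{r_2}$ and $\overline{b}_2^{\,N_2}=a^{t_2}$; this is the standard metacyclic group of order $MN_2$, whose existence requires exactly $r_2^{N_2}\equiv 1$ and $t_2r_2\equiv t_2\pmod M$, i.e.\ \eqref{Relacion-rN} and \eqref{Relacion-tr}. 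Next define $\phi\in\End(G_2)$ by $\phi(a)=a^{r_1}$ and $\phi(\overline{b}_2)=\overline{b}_2\,a$; using \eqref{CBAPotenciaga} one computes $\phi(\overline{b}_2)^{N_2}=(\overline{b}_2a)^{N_2}=a^{t_2+\Ese{r_2}{N_2}}$, so $\phi$ respects the defining relation of $G_2$ precisely when $\Ese{r_2}{N_2}\equiv t_2(r_1-1)\pmod M$, i.e.\ \eqref{Relacion-t2r1}; hence $\phi$ is a well-defined automorphism. One then checks $\phi(a^{t_1})=a^{r_1t_1}=a^{t_1}$ by \eqref{Relacion-tr}, and, iterating $\phi^{k}(\overline{b}_2)=\overline{b}_2\,a^{\Ese{r_1}{k}}$, that $\phi^{N_1}$ equals conjugation by $a^{t_1}$: on $a$ this is \eqref{Relacion-rN}, and on $\overline{b}_2$ it amounts to $\Ese{r_1}{N_1}\equiv t_1(1-r_2)\pmod M$, i.e.\ \eqref{Relacion-t1r2}. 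The classical cyclic-extension construction then produces $G=\GEN{G_2,\overline{b}_1}$ with $\overline{b}_1$ inducing $\phi$ and $\overline{b}_1^{\,N_1}=a^{t_1}$, of order $MN_1N_2$. Since $[\overline{b}_2,\overline{b}_1]=a$ and $G/A$ is generated by two commuting elements of orders $N_1,N_2$, we get $A=G'\trianglelefteq G$ cyclic of order $M$, $G/A\cong B$, with conjugation action $\sigma$; in particular $G$ is cyclic-by-abelian, so \Cref{CyclicByAbelian} applies inside $G$.

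Finally I would read off $\rho$ from the section. Writing $s(g_i)=\overline{b}_1^{\,x_i}\overline{b}_2^{\,y_i}$ and using the commutator identity from \Cref{CyclicByAbelian} in the form $[\overline{b}_2^{\,w},\overline{b}_1^{\,v}]=a^{\Ese{r_1}{v}\Ese{r_2}{w}}$ to move $\overline{b}_1^{\,x_2}$ past $\overline{b}_2^{\,y_1}$, then using $a^{\overline{b}_2}=a^{r_2}$ to carry the resulting $a$-power to the right, one obtains $s(g_1)s(g_2)=\overline{b}_1^{\,x_1+x_2}\overline{b}_2^{\,y_1+y_2}a^{\,r_2^{y_2}\Ese{r_1}{x_2}\Ese{r_2}{y_1}}$. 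Reducing the exponents with $\overline{b}_i^{\,N_i}=a^{t_i}$, so that $\overline{b}_1^{\,x_1+x_2}=\overline{b}_1^{\,[x_1+x_2]_{N_1}}a^{t_1\lfloor (x_1+x_2)/N_1\rfloor}$ and likewise for $\overline{b}_2$, and carrying the $\overline{b}_1$-contribution past $\overline{b}_2^{\,y_1+y_2}$ (which introduces the factor $r_2^{y_1+y_2}$) gives
$$s(g_1)s(g_2)=\overline{b}_1^{\,[x_1+x_2]_{N_1}}\overline{b}_2^{\,[y_1+y_2]_{N_2}}\,a^{E},\qquad E=r_2^{y_2}\Ese{r_1}{x_2}\Ese{r_2}{y_1}+t_1r_2^{y_1+y_2}\lfloor\tfrac{x_1+x_2}{N_1}\rfloor+t_2\lfloor\tfrac{y_1+y_2}{N_2}\rfloor.$$
As $s(g_1g_2)=\overline{b}_1^{\,[x_1+x_2]_{N_1}}\overline{b}_2^{\,[y_1+y_2]_{N_2}}$, this says exactly $\rho(g_1,g_2)=s(g_1)s(g_2)s(g_1g_2)^{-1}=a^{E}$, matching the stated formula.

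The main obstacle is not any single deep computation but the bookkeeping of conventions. One must fix the direction of conjugation so that the action of $B$ on $A$ is genuinely $\sigma$ (giving $a^{r_i}$, not $a^{r_i^{-1}}$) and so that the powers $r_2^{y_2}$ and $r_2^{y_1+y_2}$ attach to the correct summands of $E$; with the wrong convention the signs in \eqref{Relacion-t1r2} get reversed. The only substantive verification inside the construction is that $\phi$ is an automorphism of the correct order, and this is precisely where the two cross-relations \eqref{Relacion-t1r2} and \eqref{Relacion-t2r1} are consumed, with \eqref{Relacion-rN} and \eqref{Relacion-tr} guaranteeing consistency of the powers $\overline{b}_i^{\,N_i}=a^{t_i}$.
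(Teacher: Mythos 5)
Your proof is correct, but it takes a genuinely different route from the paper's. The paper proves the cocycle identity \eqref{cocycle} by direct computation: it first records the floor-function identity \eqref{TresCocientes} and the generalized relations \eqref{t1r2n} and \eqref{t2r1n}, and then checks by explicit manipulation of the quantities $R,R',T_1,T_1',T_2,T_2'$ that both sides of \eqref{cocycle} agree modulo $M$. You instead build an honest extension of $A$ by $B$ in two classical steps --- the metacyclic group $G_2=\GEN{a,\overline b_2}$, which consumes \eqref{Relacion-rN} and \eqref{Relacion-tr}, and then the cyclic extension by $\overline b_1$ acting as $\phi$, where well-definedness of $\phi$ consumes \eqref{Relacion-t2r1} and the compatibility conditions $\phi(a^{t_1})=a^{t_1}$ and $\phi^{N_1}=(\text{conjugation by }a^{t_1})$ consume \eqref{Relacion-tr} and \eqref{Relacion-t1r2} --- and you then read off $\rho$ as the factor set of the section $s(b_1^{x}b_2^{y})=\overline b_1^{\,x}\overline b_2^{\,y}$, so the cocycle identity holds automatically. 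What the paper's verification buys is self-containedness: it uses nothing beyond the appendix identities and never invokes the cyclic extension theorem. What your construction buys is economy and transparency: the long verification disappears, each hypothesis is consumed at a visible point, and you directly produce the group with presentation \eqref{Presentacion}, which is exactly what \Cref{Cociclo} is used for in \Cref{SeccionDemostracion}, so your argument would in effect merge the lemma with its application. Two points need tightening, neither affecting the substance. First, the cyclic extension theorem you invoke is external to the paper and should be cited or proved on the spot (e.g.\ form $G_2\rtimes\GEN{c}$ with $c$ of infinite order acting as $\phi$, check that $a^{-t_1}c^{N_1}$ is central --- this is precisely where $\phi(a^{t_1})=a^{t_1}$ and $\phi^{N_1}=$ conjugation by $a^{t_1}$ enter --- and factor out the subgroup it generates). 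Second, your closing formula $\rho(g_1,g_2)=s(g_1)s(g_2)s(g_1g_2)^{-1}$ has the quotient on the wrong side: your display shows $s(g_1)s(g_2)=s(g_1g_2)a^{E}$, so the factor set satisfying the paper's identity \eqref{cocycle} is $\rho(g_1,g_2)=s(g_1g_2)^{-1}s(g_1)s(g_2)=a^{E}$, whereas $s(g_1)s(g_2)s(g_1g_2)^{-1}$ equals $a^{E}$ only up to conjugation by $s(g_1g_2)$, which acts nontrivially on $A$.
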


\begin{proof}
The only non-obvious statement  is   that $\rho$ satisfies the cocycle condition, namely: 
\begin{equation}\label{cocycle}
\rho(b_1^{x_1} b_2^{y_1} ,b_1^{x_2+x_3} b_2^{y_2+y_3})   \cdot  \rho(b_1^{x_2} b_2^{y_2}, b_1^{x_3}b_2^{y_3}) = \rho(b_1^{x_1+x_2} b_2^{y_1+y_2}, b_1^{x_3} b_2^{y_3}) \cdot   \rho(b_1^{x_1} b_2^{y_1},b_1^{x_2}b_2^{y_2}) ^{\sigma(b_1^{x_3}b_2^{y_3}) }
\end{equation} 
for  $ b_1^{x_i}b_2^{y_i} \in B$ with  $0\leq x_i<N_1$ and $0\leq y_i<N_2$ for $i=1,2,3$. 
To prove \eqref{cocycle} we first make several observations.

Let $n$ a  non-negative integer. A case by case argument shows that if $0\le z_1,z_2,z_3< n$ then
\begin{equation}\label{TresCocientes}
\e{\frac{[z_1+z_2]_n+z_3}{n}}+ \e{\frac{z_1+z_2}{n}} = 
\e{\frac{z_1+[z_2+z_3]_n}{n}}+ \e{\frac{z_2+z_3}{n}}.
\end{equation}
We also observe that
$$\Ese{r_i}{n} = \sum_{j=0}^{\e{\frac{n}{N_i}}-1} r_i^{jN_i} \sum_{k=0}^{N_i-1} r_i^k + 
r_i^{N_i\e{\frac{n}{N_i}}} \sum_{k=0}^{[n]_{N_i}} r_i^k$$ 
and hence  using \eqref{Relacion-rN} we obtain
\begin{equation}\label{Relacion-rn}
\Ese{r_i}{n} \equiv 
\e{\frac{n}{N_i}} \Ese{r_i}{N_i} + \Ese{r_i}{[n]_{N_i}} \bmod M.
\end{equation}
Arguing by induction on $n$, congruences \eqref{Relacion-t1r2} and \eqref{Relacion-t2r1} generalize to
\begin{eqnarray}
\label{t1r2n} t_1 & \equiv & t_1r_2^n + \Ese{r_2}{n} \Ese{r_1}{N_1} \bmod M, \\
\label{t2r1n} t_2 & \equiv & t_2r_1^n - \Ese{r_1}{n} \Ese{r_2}{N_2} \bmod M.
\end{eqnarray}
 
Let 
\begin{align*}
R &=  {  r_2^{y_3 } \Ese{r_1}{x_3} \Ese{r_2}{y_2}}+ {r_2^{y_2+y_3} \Ese{r_2}{y_1}}\Ese{r_1}{[x_2+x_3]_{N_1}}; \\
R' &= { r_1^{x_3}    r_2^{y_2+y_3} \Ese{r_1}{x_2} \Ese{r_2}{y_1}}+{  r_2^{y_3} \Ese{r_1}{x_3} \Ese{r_2}{[y_1+y_2]_ {N_2}}}; \\ 
T_1&= t_1 r_2^{y_2+y_3}\left( {  r_2^{ y_1}  \e{ \frac{x_1+[x_2+x_3]_{N_1}}{N_1}} } +   {  \e{ \frac{x_2+x_3}{N_1}}}\right); 
\\
T_1'&= t_1 r_2^{y_1+y_2+y_3}\left(  {    \e{   \frac{[x_1+x_2]_{N_1}+x_3}{N_1}    }} + {      \e{ \frac{x_1+x_2}{N_1} }} \right);
\\
T_2 &= t_2\left( {   \e{  \frac{y_1+[y_2+y_3]_{N_2}}{N_2}  } }+{  \e{   \frac{y_2+y_3}{N_2}}} \right);  \\
T_2'&= t_2 \left({  \e{  \frac{[y_1+y_2]_{N_2}+y_3}{N_2}  }}+ { r_1^{x_3}   \e{   \frac{y_1+y_2}{N_2} } } \right).
\end{align*}
Then \eqref{TresCocientes},  \eqref{t1r2n} and \eqref{Relacion-rn} imply
\begin{align*}
T_1'&= t_1 r_2^{y_1+y_2+y_3} \left( {   \e{ \frac{x_1+[x_2+x_3]_{N_1}}{N_1}} } +   {  \e{ \frac{x_2+x_3}{N_1}}}\right) \\
&= T_1  + t_1 r_2^{y_2+y_3} (r_2^{y_1}-1) {  \e{ \frac{x_2+x_3}{N_1}}}
\equiv T_1 - r_2^{y_2+y_3} \Ese{r_2}{y_1}  \Ese{r_1}{N_1}{  \e{ \frac{x_2+x_3}{N_1}}} \\
&\equiv T_1 -r_2^{y_2+y_3} \Ese{r_2}{y_1}    \Ese{r_1}{x_2+x_3} + r_2^{y_2+y_3} \Ese{r_2}{y_1} \Ese{r_1}{[x_2+x_3]_{N_1}} \bmod M.
\end{align*}
Similarly, \eqref{TresCocientes},  \eqref{t2r1n} and \eqref{Relacion-rn} imply
\begin{align*}
T_2' & \equiv T_2 + r_2^{y_3}\Ese{r_1}{x_3} \Ese{r_2}{y_1+y_2} - r_2^{y_3}\Ese{r_1}{x_3}\Ese{r_2}{[y_1+y_2]_{N_2}} \bmod M.
\end{align*}
 Therefore\begin{align*}
T_1'+T_2' +R'-R \equiv
& \; T_1 -r_2^{y_2+y_3} \Ese{r_2}{y_1}    \Ese{r_1}{x_2+x_3}
 +T_2 + r_2^{y_3}\Ese{r_1}{x_3} \Ese{r_2}{y_1+y_2} \\
& + {r_1^{x_3}    r_2^{y_2+y_3} \Ese{r_1}{x_2} \Ese{r_2}{y_1}}-{r_2^{y_3} \Ese{r_1}{x_3} \Ese{r_2}{y_2}}   \\
\equiv
& \; T_1 - r_2^{y_2+y_3} \Ese{r_2}{y_1} \Ese{r_1}{x_3} -  r_2^{y_2+y_3} r_1^{x_3} \Ese{r_2}{y_1} \Ese{r_1}{x_2} \\
&+T_2 +  r_2^{y_3} \Ese{r_1}{x_3} \Ese{r_2}{y_2} +r_2^{y_2+y_3}\Ese{r_1}{x_3} \Ese{r_2}{y_1 }\\
& + {r_1^{x_3}    r_2^{y_2+y_3} \Ese{r_1}{x_2} \Ese{r_2}{y_1}}-{r_2^{y_2} \Ese{r_1}{x_2} \Ese{r_2}{y_3}}  \\
= & \; T_1+T_2  \bmod M,
\end{align*}  
  which implies \eqref{cocycle}. 
 
\end{proof}

We are ready to finish the proof of the theorem.
Let $I=(p,m,n_1,n_2, \sigma_1,\sigma_2,  o_1,o_2,o'_1,o'_2,u_1,u_2)$ satisfy conditions \eqref{1}-\eqref{7} in the theorem.
We have to prove that   $I=\inv(G)$  for some $2$-generated non-abelian cyclic-by-abelian finite $p$-group  $G$. 
Let $r_1,r_2$ be as in \eqref{def-erres} and $t_i=u_ip^{m-o'_i}$ for $i=1,2$. 
Then,  by conditions \eqref{2}, \eqref{4}, \eqref{6b} and \eqref{7}, relations \eqref{Relacion-rN}-\eqref{Relacion-t2r1} hold  for $M=p^m$ and $N_i=p^{n_i}$. With the notation of \Cref{Cociclo}, consider the group extension
	$$1\rightarrow A \rightarrow G \rightarrow B \rightarrow 1$$
realizing the action $\sigma$ and the 2-cocycle $\rho$. 
That is $G=\GEN{a,b_1,b_2}$ and the following relations holds $a^{b_i}=a^{r_i}$, $[  b_2,b_1]=a$  and $b_i^{p^{n_i}}=a^{t_i}$ because $\rho(b_2,b_1)=a$, $\rho(b_i,b_i^{k-1})=1$ if $k<p^{n_i}$ and $\rho(b_i,b_i^{p^{n_i}-1})=a^{t_i}$.
This implies that $G$ is the group given by the presentation in \eqref{Presentacion}. 
 From now on the notation for $\B$ and its variants refers to this group.  
  Observe that  $b=(b_1,b_2)\in\B$,  $\sigma(b_i)=\sigma_i$ and 
$o(b_i)=o_i$ for $i=1,2$ by the definition of the $r_i$'s.   Moreover, $o'(b)=(o'_1,o'_2)$ and $u(b)=(u_2,u_1)$,  by the definition of the $t_i$'s.   
  Moreover, $b$ satisfies the conditions in   \Cref{Fijando-rLema}, as the parameters satisfy the conditions \eqref{6a},  \eqref{7b}  and \eqref{5} of the theorem. Then $b\in \B'$ and therefore $ \sigma o(G)=(\sigma_1,\sigma_2,o_1,o_2)$. Hence $b\in \B_r$  by the definitions of $r_i$ and $t_i$.   
Using now \Cref{OPrima++} if $\sigma_1=1$, or \Cref{OPrima-+} and \Cref{OPrima--} if $\sigma_1=-1$, it follows that  $b\in \B'_r$, since the parameters satisfy conditions  \eqref{6c}, \eqref{6d} and \eqref{6e}, if $\sigma_1=1$, and otherwise they satisfy conditions     \eqref{7b} and \eqref{7c}.
Finally, \Cref{ues+} and \Cref{ues-} yield $b\in \B_{rt}$, since the parameters satisfy conditions   \eqref{6f}, \eqref{6g}, \eqref{7b} and \eqref{7c}.
Therefore $\inv(G)=I$, 
as desired. 

\section{Implementation}\label{SectionImplementation} 

In this section we present some {\sf GAP} \cite{GAP4} functions dealing with $2$-generated cyclic-by-abelian finite $p$-groups. The code of these function is available in \verb+https://www.um.es/adelrio/CbA2G.php+.

%Let $\G_{p^n}$ denote the set formed by the non-abelian cyclic-by-abelian 2-generated groups of order $p^n$.

The function \verb+CBA2GenByOrder(p,n)+ provides the list formed by all the 12-tuples
$\inv(G)$ with $G$ a non-abelian cyclic-by-abelian 2-generated group of order $p^n$. For example, the following calculation shows that there are exactly 273 and 100 isomorphism classes of such groups of order $2^{10}$ and $3^{10}$, respectively.

\medskip

\begin{verbatim}
gap> l1:=CbA2GenByOrder(2,10);;l2:=CbA2GenByOrder(3,10);;
gap> Length(l1);Length(l2);
273
100
\end{verbatim}
\medskip
We use the output of the previous computation to pick the groups $G$ and $H$ with $\inv(G)$ and $\inv(H)$ as follows:
\medskip
\begin{verbatim}
gap> l1[210];l2[92];                                                      
[ 2, 5, 3, 2, -1, 1, 0, 1, 1, 4, 1, 7 ]
[ 3, 3, 5, 2, 1, 1, 2, 1, 1, 2, 2, 1 ]
\end{verbatim}
\medskip
By \eqref{def-erres}, \eqref{def-tes} and \eqref{Presentacion}
	$$G=\GEN{b_1,b_2 \mid a=[b_2,b_1], a^{2^5}=1, a^{b_1}=a^{-1}, a^{b_2}=a^{1+2^4}, b_1^{2^3}=a^{2^4}, b_2^{2^2}=a^{7\cdot 2}}$$
and
	$$H=\GEN{b_1,b_2 \mid a=[b_2,b_1], a^{3^3}=1, a^{b_1}=a^{1+3}, a^{b_2}=a^{12}, b_1^{3^5}=a^{2\cdot 3^2}, b_2^{3^2}=a^{2\cdot 3}}.$$ 

The function \verb+CBA2GenPcp(x)+ implements in {\sf GAP} the group $G$ with $x=\inv(G)$.

\medskip

\begin{verbatim}
gap> G:=CbA2GenPcp(l1[210]);DG:=DerivedSubgroup(G);;Order(DG);                  
<pc group of size 1024 with 10 generators>
32
gap> AbelianInvariants(G);NilpotencyClassOfGroup(G);                           
[ 4, 8 ]
6
gap> H:=CbA2GenPcp(l2[92]);DH:=DerivedSubgroup(H);;Order(DH);
<pc group of size 59049 with 10 generators>
27
gap> AbelianInvariants(H);NilpotencyClassOfGroup(H);         
[ 9, 243 ]
4
gap> StructureDescription(G);                                               
"C8 . ((C32 x C2) : C2) = C32 . (C8 x C4)"
gap> StructureDescription(H);
"C81 . (C27 : C27) = C27 . (C243 x C9)"
\end{verbatim}

\medskip

The function \verb+InvariantsAndBasis(G)+, with input a cyclic-by-abelian 2-generated finite $p$-group, outputs a pair $(\inv(G),[b_1,b_2])$ where $[b_1,b_2]$ is an element of the set $\B_{rt}$ of $G$, i.e. $G=\GEN{b_1,b_2}$ and $b_1$ and $b_2$ satisfy the relations of  presentation \eqref{Presentacion}. The function \verb+Invariants(G)+ only outputs the list $\inv(G)$. 

\medskip

\begin{verbatim}
gap> ib:=InvariantsAndBasis(G);
[ [ 2, 5, 3, 2, -1, 1, 0, 1, 1, 4, 1, 7 ], [ f1*f3*f5, f4 ] ]
gap> b1:=ib[2][1];; b2:=ib[2][2];; a := Comm(b2,b1);;
gap> Order(a);
32
gap> a^b1=a^-1 and a^b2=a^(1+2^4) and b1^(2^3)=a^(2^4) and b2^(2^2)=a^(7*2);
true
gap> Invariants(H);
[ 3, 3, 5, 2, 1, 1, 2, 1, 1, 2, 2, 1 ]
\end{verbatim}

\medskip

The function \verb+AreIsomorphicGroups(G,H)+ for $G$ and $H$ non-abelian 2-generated cyclic-by-abelian $p$-groups checks whether $G$ and $H$ are isomorphic by comparing $\inv(G)$ and $\inv(H)$. 

The function \verb+IsomorphismCbAGroups(G,H)+ provides an isomorphism between $G$ and $H$ in case it exists. 

\medskip

\begin{verbatim}
gap> G:=SmallGroup(2^8,465);
<pc group of size 256 with 8 generators>
gap> inv:=Invariants(G);
[ 2, 4, 2, 2, -1, -1, 0, 1, 0, 0, 1, 1 ]
gap> H:=CbA2GenPcp(inv);
<pc group of size 256 with 8 generators>
gap> AreIsomorphicGroups(G,H);
true
gap> IsomorphismCbAGroups(G,H);
[ f1, f1*f2 ] -> [ f1, f3 ]
gap> K:=SmallGroup(2^8,532);
<pc group of size 256 with 8 generators>
gap> AreIsomorphicGroups(K,H);
false
gap> Invariants(K);
[ 2, 2, 5, 1, -1, 1, 0, 0, 1, 2, 1, 1 ]
\end{verbatim}

\medskip

The function \verb+DescendantsCbA2Gen(p,n)+ computes representatives of the isomorphism  classes of non-abelian cyclic-by-abelian 2-generated groups of order $p^n$, using the $p$-group generation algorithm \cite{OBrien1990} as implemented for {\sf GAP} in \cite{ANUPQ}.

\medskip

\begin{verbatim}
gap> x:=DescendantsCbA2Gen(2,10);;
gap> Length(x);
273
\end{verbatim}

\medskip

The functions \verb+CheckNumber(p,n)+ and \verb+CheckIsoClasses(p,n)+ compares the outputs of \verb+CbA2GenByOrder(p,n)+ and \verb+DescendantsCbA2Gen(p,n)+ returning \verb+true+ in case the outputs agrees. More precisely, \verb+CheckNumber(p,n)+ returns true if the outputs of \verb+CbA2GenByOrder(p,n)+ and \verb+DescendantsCbA2Gen(p,n)+ have the same cardinality, as they should. 
The output of \verb+CheckIsoClasses(p,n)+ is \verb+true+ when the list obtained by applying \verb+Invariants+ to the list of groups given by \verb+DescendantsCbA2Gen(p,n)+ coincides, maybe in different order, with the output of \verb+CbA2GenByOrder(n,p)+.

The following calculation provides an experimental verification of the correctness of the theorem up to our computational capacity. It is a costly calculation, which for large values requires allowing 4Gb or even 8Gb of memory. We have been able to verify that the output is \verb+true+ up to the following orders: $2^{12}, 3^{11}, 5^{10}, 7^9, 11^8, 13^7$ and $23^8$.

\medskip

\begin{verbatim}
gap> CheckIsoClasses(2,12);
true
gap> CheckIsoClasses(3,10);
true
\end{verbatim}

 \section{Appendix:  The operators $\Ese{-}{-}$ and $\Te{-,-}{-}$} \label{Apendice}
 
This section is dedicated to prove some   useful properties of the operators $\Ese{-}{-}$ and $\Te{-,-}{-}$ defined at the beginning of \Cref{SectionBr}.
 
 \begin{lemma}\label{PropEse}
 	If $x,y$ are integers and $a,b,c$ and $d$ are positive integers then 
 	\begin{enumerate}
 		\item $\Ese{x}{a} = \begin{cases} a, & \text{if } x=1; \\ \frac{x^a-1}{x-1}, & \text{otherwise}.\end{cases}$
 		\item $\Ese{x}{1+a}=1+x\Ese{x}{a}$.
 		\item $\Ese{x}{ab} = \Ese{x}{a}\Ese{x^a}{b}$. 
 		\item\label{EseCon1} $(x-1)\Te{x,1}{a} = \Ese{x}{a}-a$.
 	\end{enumerate}
 \end{lemma}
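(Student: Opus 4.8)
The plan is to derive all four identities directly from the definitions $\Ese{s}{n}=\sum_{i=0}^{n-1}s^i$ and $\Ese{s,t}{n}=\sum_{0\le i<j<n}s^it^j$; each reduces to an elementary manipulation of finite geometric sums, so the only thing to watch is the bookkeeping of the summation ranges.

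For (1) I would note that $\Ese{x}{a}$ is the partial geometric series $\sum_{i=0}^{a-1}x^i$: if $x=1$ every summand equals $1$, giving $a$, and if $x\ne 1$ then multiplying by $x-1$ telescopes to $x^a-1$. I would also record the resulting identity $(x-1)\Ese{x}{a}=x^a-1$ (valid for all $x$, including $x=1$), since it is needed in (4). For (2), isolating the term $i=0$ gives $\Ese{x}{1+a}=1+\sum_{i=1}^{a}x^i=1+x\sum_{i=0}^{a-1}x^i=1+x\Ese{x}{a}$.

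For (3) I would use the unique representation of each index $i\in\{0,\dots,ab-1\}$ as $i=qa+r$ with $0\le q<b$ and $0\le r<a$, so that $x^i=(x^a)^q x^r$. The double sum then factors as $\big(\sum_{r=0}^{a-1}x^r\big)\big(\sum_{q=0}^{b-1}(x^a)^q\big)=\Ese{x}{a}\,\Ese{x^a}{b}$. For (4) I would argue by induction on $a$. Separating the summands with the largest index $j=a$ yields the recursion $\Ese{x,1}{a+1}=\Ese{x,1}{a}+\Ese{x}{a}$. The base $a=1$ is the empty sum, $0=\Ese{x}{1}-1$. For the step, $(x-1)\Ese{x,1}{a+1}=(x-1)\Ese{x,1}{a}+(x-1)\Ese{x}{a}=(\Ese{x}{a}-a)+(x^a-1)=\Ese{x}{a+1}-(a+1)$, using the inductive hypothesis, the identity from (1), and $\Ese{x}{a+1}=\Ese{x}{a}+x^a$.

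I do not expect a genuine obstacle here, as these are purely algebraic identities over $\Z$. The only mild care is in (1) and (4), where the cases $x=1$ and $x\ne 1$ must be handled uniformly; phrasing (1) as the telescoping identity $(x-1)\Ese{x}{a}=x^a-1$ and proving (4) by induction rather than through the closed form avoids any spurious division by $x-1$ and keeps every step valid for all integers $x$.
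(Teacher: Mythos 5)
Your proposal is correct and takes essentially the same approach as the paper: the paper simply declares (1)--(3) obvious (your arguments for them are exactly the obvious ones) and proves (4) by the direct computation $(x-1)\Ese{x,1}{a}=\sum_{j=1}^{a-1}(x-1)\Ese{x}{j}=\sum_{j=1}^{a-1}(x^j-1)=\Ese{x}{a}-a$, which rests on the same telescoping identity $(x-1)\Ese{x}{j}=x^j-1$ that powers your inductive step. Your induction on $a$ is just the unrolled form of that one-line summation, so there is no substantive difference.
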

 
 \begin{proof}
 	The first three properties are obvious. The fourth one follows from the following calculation
 	$$ {(x-1)}\Te{x,1}{a} = (x-1)\sum_{j=1}^{a-1} \sum_{i=0}^{j-1} x^i = \sum_{j=1}^{a-1} (x^j-1) = \Ese{x}{a}-a.$$
 \end{proof}
 
 \begin{lemma}\label{EseProp}
 	Let $p$ be a prime integer and let $s$ and $n$ be integers with $n>0$ and $s\equiv 1 \bmod p$.
 	\begin{enumerate}
 		\item\label{ValEse} If  $p$ is odd or   $s\equiv 1 \bmod 4$ then $v_p(s^n-1)=v_p(s-1)+v_p(n)$. Therefore $v_p(\Ese{s}{n})=v_p(n)$ and $\ord_{p^n}(s)={p^{\max(0,n-v_p(s-1))}}$.
 		\item\label{ValEse-1} If $p=2$ and $s\equiv -1\bmod 4$  then
 		$$v_2(s^n-1)= \begin{cases}
 		1, & \text{if } {  2\nmid n}; \\
 		v_2(s+1)+v_p(n),& \text{otherwise{;}}
 		\end{cases}$$
 		$$v_2(\Ese{s}{n})=\begin{cases}  
 		0, & \text{if } {  2\nmid n};\\ 
 		v_2(n)+v_2(s+1)-1, &\text{otherwise{;}} \end{cases}$$  
 		and if $n\ge 2$ then   $\ord_{2^n} (s)= { 2}^{\max(1,n -v_2(s+1))} $.
 		
 		\item\label{EseCero} If $v_p(s-1)=a$ and either $n$ or $n-1$ is multiple of $p^{m-a}$ then
 		$$\Ese{s}{n} \equiv \begin{cases}
 		0 \bmod 2^m, & \text{if } p=2, a=1<m, \text{ and } n\equiv 0 \bmod 2^{m-1}; \\
 		1 \bmod 2^m, & \text{if } p=2, a=1<m, \text{ and } n\equiv 1 \bmod 2^{m-1}; \\
 		n+2^{m-1} \bmod 2^m, & \text{if } p=2, 2\le a<m \text{ and }  n \not\equiv 0,1 \bmod 2^{m-a+1}; \\
 		n \bmod p^m, & \text{otherwise}.
 		\end{cases}$$
 	\end{enumerate}
 \end{lemma}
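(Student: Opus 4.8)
The plan is to reduce everything to two elementary $p$-adic facts: the Lifting the Exponent principle (itself just the binomial expansion of $s^p$, so no external input is really needed) and the binomial expansion of $\Ese{s}{n}$. Throughout I would write $t=s-1$, so that $a:=v_p(t)=v_p(s-1)$, dispose of the trivial case $s=1$ (where $\Ese{1}{n}=n$ makes every assertion immediate), and otherwise use from \Cref{PropEse} both the identity $\Ese{s}{n}=\frac{s^n-1}{s-1}$ and the recursion $\Ese{s}{1+n'}=1+s\,\Ese{s}{n'}$.

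For parts \eqref{ValEse} and \eqref{ValEse-1} I would apply Lifting the Exponent to the pair $(s,1)$. When $p$ is odd with $p\mid s-1$, or $p=2$ with $4\mid s-1$, it gives $v_p(s^n-1)=v_p(s-1)+v_p(n)$; when $p=2$ and $s\equiv-1\bmod 4$ (so $v_2(s-1)=1$) it gives $v_2(s^n-1)=1$ for odd $n$ and $v_2(s^n-1)=v_2(s+1)+v_2(n)$ for even $n$. Dividing by $v_p(s-1)$ yields the claimed values of $v_p(\Ese{s}{n})$. For the order statements one uses that $\ord_{p^n}(s)$ is the least $k>0$ with $v_p(s^k-1)\ge n$; substituting the valuation formulas and solving for the least admissible $v_p(k)$ gives $p^{\max(0,n-v_p(s-1))}$ in case \eqref{ValEse}, while in case \eqref{ValEse-1} any odd $k$ yields $v_2(s^k-1)=1<n$ (as $n\ge 2$), forcing $k$ even and hence $k=2^{\max(1,\,n-v_2(s+1))}$.

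For part \eqref{EseCero} I would expand $s^i=(1+t)^i$ and sum with the hockey-stick identity to obtain
\begin{equation*}
\Ese{s}{n}=\sum_{i=0}^{n-1}(1+t)^i=\sum_{k=1}^{n}\binom{n}{k}t^{\,k-1}=n+\binom{n}{2}t+\binom{n}{3}t^{2}+\cdots,
\end{equation*}
and bound each summand using $v_p\!\big(\binom{n}{k}\big)\ge v_p(n)-v_p(k)$ (from $k\binom{n}{k}=n\binom{n-1}{k-1}$). Assuming first $p^{m-a}\mid n$, the $k$-th term has valuation at least $m+a(k-2)-v_p(k)$, which is $\ge m$ for every $k\ge 3$ and, when $p$ is odd, also for $k=2$ (since $v_p(2)=0$); thus for odd $p$ all terms past the first vanish modulo $p^m$ and $\Ese{s}{n}\equiv n$. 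The only term that can survive is $k=2$ with $p=2$: there $v_2\!\big(\binom{n}{2}t\big)=v_2(n)+a-1$, which is $\ge m$ unless $v_2(n)=m-a$, in which case it equals exactly $2^{m-1}$; this condition $v_2(n)=m-a$ is precisely $n\not\equiv 0\bmod 2^{m-a+1}$ (recall $n$ is even here), matching the third case of the statement. When $a=1$ I would instead read the answer straight off part \eqref{ValEse-1}, giving $\Ese{s}{n}\equiv 0$ whenever $2^{m-1}\mid n$. Finally the hypothesis $p^{m-a}\mid n-1$ is reduced to the case $p^{m-a}\mid n$ through $\Ese{s}{n}=1+s\,\Ese{s}{n-1}$: since $s\equiv 1\bmod p^{a}$ one checks $s(n-1)\equiv n-1$ and $s\cdot 2^{m-1}\equiv 2^{m-1}$ modulo $p^m$, so a surviving $2^{m-1}$ is carried across unchanged, and $v_2(n-1)=m-a$ becomes $n\not\equiv 1\bmod 2^{m-a+1}$.

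The step I expect to be the main obstacle is the bookkeeping in part \eqref{EseCero} for $p=2$ with $a\ge 2$: one must argue that among all binomial terms only $k=2$ can be nonzero modulo $2^m$, pin down exactly when its valuation drops to $m-1$, and then propagate this $2^{m-1}$ correctly through the recursion while verifying that the error terms $t(n-1)$ and $t\,2^{m-1}$ remain of valuation $\ge m$. By contrast the odd-$p$ and $a=1$ cases are routine once parts \eqref{ValEse} and \eqref{ValEse-1} are established.
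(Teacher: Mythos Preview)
Your argument is correct. For parts \eqref{ValEse} and \eqref{ValEse-1} you cite Lifting the Exponent while the paper proves the same statement from scratch by induction on $v_p(n)$ via the binomial expansion of $s^p$; as you note, these are the same computation.

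For part \eqref{EseCero} your route is genuinely different from the paper's. When $p$ is odd or $a\ge 2$, the paper observes that since $\ord_{p^m}(s)=p^{m-a}$ by part \eqref{ValEse}, the powers $\{1,s,\dots,s^{p^{m-a}-1}\}$ coincide \emph{as a set} with the residues $\{1+ip^a:0\le i<p^{m-a}\}$ in $\Z/p^m\Z$, so $\Ese{s}{p^{m-a}}$ may be evaluated by the closed-form arithmetic sum $p^{m-a}+p^a\binom{p^{m-a}}{2}$; the general $n$ then follows by periodicity, and the case $p=2$, $a=1$ is handled by the factorisation $\Ese{s}{2^{m-1}}=(1+s)\,\Ese{s^2}{2^{m-2}}$. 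Your hockey-stick expansion $\Ese{s}{n}=\sum_{k\ge 1}\binom{n}{k}t^{k-1}$ together with the termwise bound $v_p\binom{n}{k}\ge v_p(n)-v_p(k)$ is more computational but has the virtue of treating any $n$ with $p^{m-a}\mid n$ directly and of isolating the single surviving term $\binom{n}{2}t$ transparently; the only inequality needed, $k-2\ge v_p(k)$ for $k\ge 3$, is immediate. The paper's trick is slicker once one spots the set-theoretic description of $\langle s\rangle$, while yours is arguably the more natural approach to discover and avoids the separate reduction from arbitrary $n$ to $n=p^{m-a}$.
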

 
 \begin{proof}
 	\eqref{ValEse} is clear if $s=1$ (with the convention that $\infty+n=\infty$ and $n-\infty=-\infty<0$) so suppose that $s\ne 1$. Then $\Ese{s}{n} = \frac{s^n-1}{s-1}$,  {and hence} $v_p(\Ese{s}{n}) = v_p(s^n-1)-v_p(s-1)$ so to prove \eqref{ValEse} by induction on $v_p(n)$ it is enough to show that if $p\nmid n$ then $v_p(s^n-1)=v_p(s-1)$ and $v_p(s^p-1)=v_p(s-1)+1$.  Let $a=v_p(s-1)$. So $s=1+kp^a$ with $p\nmid k$.  	Then $s^n = 1+knp^a + \sum_{i=2}^n \binom{n}{i} k^i p^{ia}\equiv 1+knp^a \bmod p^{a+1}$.  Thus, if $p\nmid n$ then $v_p(s^n-1)=a$. Moreover
$s^p = 1+kp^{a+1} + \sum_{i=2}^p \binom{p}{i} k^i p^{ia}$. If $2\le i 
<p$ then $v_p(\binom{p}{i} p^{ia})=1+ap\ge a+2$.  {In particular,}  if $p\ne 2$ then $v_p(s^p-1)=a+1$. If  $p=2$   then  $s\equiv 1 \bmod 4$,  by hypothesis.   Therefore   $a\ge 2$ and hence $2a>a+1$ and   $s^2 = 1+k2^{a+1}+k^22^{2a}\equiv 1+k2^{a+1}\bmod 2^{a+2}$.   {So, in both cases $v_p(s^p-1)=a+1$, as desired.}

 	The hypothesis $s\equiv 1 \bmod p$ implies that $\ord_{p^n}(s)= p^m$ for
 	\begin{align*}
 	m &=\min \{ i\geq 0 \ : \ s^{p^i}\equiv 1\bmod p^n  \} =   \min \{ i\geq 0 \ : \ v_p(s^{p^i}-1)\geq n  \} \\
 	&=\min \{ i\geq 0 \ : \ i+ v_p(s-1)  \geq n \}     = \max(0,n-v_p(s-1)).
 	\end{align*}

 	\eqref{ValEse-1}   The argument above shows in general that $v_p(s^n-1)=v_p(s-1)$ if $p\nmid n$. In particular if $2\nmid n$ then   $v_2(s^n-1)=v_2(s-1)=1$, and consequently $v_2(\Ese{s}{n})= v_2(s^n-1)-v_2(s-1)=0$, because by assumption $s\equiv -1 \bmod 4$.
 	Since $s^2\equiv 1 \mod 4$, if $2\mid n$ then \eqref{ValEse} yields $v_2(s^n-1)=v_2((s^2)^{\frac{n}{2}}-1) = v_2(s^2+1)+v_2\left(\frac{n}{2}\right) = v_2(s+1)+v_2(s-1)+v_2(n)-1=v_2(s+1)+v_2(n)$.
 	The assertions about   $v_2(\Ese{s}{n})$ and $\ord_{2^n}(s)$ follows as
in   the proof of  \eqref{ValEse}.  
 	
 	\eqref{EseCero} The statement is clear if $a\ge m$ so we assume that $a<m$. 
 	
 	Suppose first that either $p$ is odd or $a\ge 2$. In that case, by \eqref{ValEse}, $\ord_{p^m}(s)=p^{m-a}$ and thus the multiplicative
group $\GEN{s}$ generated by $s$ in $\Z /{p^m\Z}$ is formed by the classes represented by the integers of the form $1+ip^a$ with $0\le i < p^{m-a}$. 
 	Thus 
 	\begin{eqnarray*}
 		\Ese{s}{p^{m-a}} & \equiv & \sum_{i=0}^{p^{m-a}-1} (1+ip^a) = p^{m-a}+p^a \sum_{i=0}^{p^{m-a}-1} i 
 		= p^{m-a}+p^a \frac{(p^{m-a}-1)p^{m-a}}{2} \\
 		&=& p^{m-a}+\frac{(p^{m-a}-1)p^m}{2} \equiv \begin{cases} p^{m-a} \bmod p^m, & \text{if } p\ne 2; \\
 			2^{m-a}+2^{m-1} \bmod 2^m, & \text{ otherwise}.\end{cases}
 	\end{eqnarray*}
 	Now using that $s^i \equiv s^j \bmod p^m$ if $i\equiv j \bmod p^{m-a}$ we
deduce that $\Ese{s}{bp^{m-a}} \equiv b \Ese{s}{p^{m-a}} \bmod p^m$ for each integer $b$.
 	Therefore, if $p^{m-a}$ divides $n$ then 
 	$$\Ese{s}{n} \equiv \frac{n}{p^{m-a}} \Ese{s}{p^{m-a}} \equiv \begin{cases} 
 	n  \bmod p^m, & \text{if } p\ne 2; \\
 	n+\frac{n}{2^{m-a}} 2^{m-1}  \bmod 2^m, & \text{if } p= 2. \end{cases}$$
 	or equivalently if $p^{m-a}$ divides $n$ then 
 	$$  \Ese{s}{n} \equiv \begin{cases}
 	n+2^{m-1} \bmod 2^m, & \text{if } p=2 \text{ and } n \not\equiv  0 \bmod 2^{m-a+1}; \\
 	n \bmod p^m, & \text{otherwise}.
 	\end{cases}$$
 	
 	If $n\equiv 1 \bmod p^{m-a}$ then $s^{n-1}\equiv 1 \bmod p^m$, and the previous statement for $n-1$ yields
 	$$\Ese{s}{n} = \Ese{s}{n-1} + s^{n-1} \equiv 
 	\begin{cases}
 	n+2^{m-1} \bmod 2^m, & \text{if } p=2 \text{ and } n \not\equiv 1 \bmod
2^{m-a+1}; \\ 
 	n \bmod p^m, & \text{otherwise}.
 	\end{cases}$$
 	
Now consider the case $p=2$ and $a=1$.
 	First of all observe that if $m\ge 2$ then  $\Ese{s}{2^{m-1}} = (1+s)\Ese{s^2}{2^{m-2}}$. 
 	Let $b=v_2(s+1)$. As $a=1$, $b\ge 2$ and $v_2(s^2-1)=b+1\ge 3$.
 	Applying the results above for $s^2$ and $ {  2}^{m-2}$ in the roles of 
$s$ and $n$ respectively we deduce that $\Ese{s^2}{2^{m-2}}\equiv 2^{m-2} 
\bmod 2^m$. In particular, $2^{m-2}$ divides $\Ese{s^2}{2^{m-2}}$. As $4\mid s+1$
we deduce that $\Ese{s}{2^{m-1}}\equiv 0 \bmod 2^m$.
 	Arguing as above we deduce that if $2^{m-1}$ divides $n$ then $\Ese{s}{n}\equiv 0 \bmod 2^m$.
 	Applying this to $n-1$ we deduce that if $n\equiv 1\bmod 2^{m-1}$ then
 $s^{n-1}\equiv 1 \bmod 2^m$ and  $\Ese{s}{n-1}\equiv 0 \bmod 2^m$.
 	Hence $\Ese{s}{n} = \Ese{s}{n-1}+s^{n-1} \equiv 1 \bmod 2^m$.

 \end{proof}
 
 In the proof of the following two lemmas we will use the following equality:
 \begin{equation}\label{eqdobleese}
 \begin{split}
 \Te{s,t}{p^{n+1}} &= \sum_{0\le i<j<p^{n+1}} s^it^j =
 \sum_{k=0}^{p-1} \sum_{\stackrel{kp^n\le i < (k+1)p^n,}{i< j < p^{n+1}}} s^i t^j \\
  &=
  \sum_{k=0}^{p-1}
  \left( \sum_{kp^n\le i < j < (k+1)p^n} s^i t^j +
  \sum_{\stackrel{kp^n\le i < (k+1)p^n,}{(k+1)p^n\le j < p^{n+1}}} s^i t^j
 \right) \\
  &=
  \sum_{k=0}^{p-1}
  \left( s^{kp^n} t^{kp^n} \sum_{0\le i < j < p^n} s^i t^j +
  s^{kp^n} t^{(k+1)p^n}\sum_{0\le i < p^n, 0\le j < p^n(p-k-1)} s^i t^j\right) \\
  & =
  \Ese{s^{p^n}t^{p^n}}{p} \Te{s,t}{p^n} +
  t^{p^n}\Ese{s}{p^n}\sum_{k=0}^{p-1} s^{kp^n} t^{kp^n} \Ese{t}{p^n(p-k-1)}.
 \end{split}
 \end{equation}

 \begin{lemma}\label{ValEse2}
 	Suppose that $s,t\equiv 1 \bmod p$ and $n$ is a positive integer.
 	Then 
 	$$\Te{s,t}{p^n} \equiv \begin{cases} 0 \bmod p^n, & \text{if } p\ne 2; \\
 	2^{n-1} \bmod 2^n, & \text{if } p=2.
 	\end{cases}$$
 \end{lemma}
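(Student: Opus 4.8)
The plan is to induct on $n\ge 1$, using the recurrence \eqref{eqdobleese} to pass from level $n$ to level $n+1$. The first thing I would record is the arithmetic consequence of the hypothesis $s,t\equiv 1\mod p$: by \Cref{EseProp}.\eqref{ValEse} we have $v_p(s^{p^n}-1)=v_p(s-1)+n\ge n+1$, and likewise for $t$ and for $st$, so that $s^{p^n}\equiv t^{p^n}\equiv (st)^{p^n}\equiv 1\mod p^{n+1}$. (For $p=2$ one applies \eqref{ValEse} to the squares, since $s^2\equiv 1\mod 4$.) Reducing \eqref{eqdobleese} modulo $p^{n+1}$ and using these congruences — which turn $\sum_{k=0}^{p-1}(st)^{kp^n}$ into $p$ and erase the factors $s^{kp^n}t^{(k+1)p^n}$ — yields the clean recurrence
\[
\Ese{s,t}{p^{n+1}} \equiv p\,\Ese{s,t}{p^n} + \Ese{s}{p^n}\sum_{i=0}^{p-1}\Ese{t}{ip^n} \pmod{p^{n+1}}.
\]

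For the base case $n=1$: when $p$ is odd, every $s^it^j\equiv 1\mod p$, so $\Ese{s,t}{p}\equiv\binom{p}{2}=p\cdot\frac{p-1}{2}\equiv 0\mod p$; when $p=2$, the only term is $(i,j)=(0,1)$, giving $\Ese{s,t}{2}=t\equiv 1=2^{0}\mod 2$. For the inductive step I would treat the two summands of the recurrence separately. The first summand is immediate from the induction hypothesis: for odd $p$, $p^n\mid\Ese{s,t}{p^n}$ gives $p\,\Ese{s,t}{p^n}\equiv 0\mod p^{n+1}$, while for $p=2$, $\Ese{s,t}{2^n}\equiv 2^{n-1}\mod 2^n$ gives $2\,\Ese{s,t}{2^n}\equiv 2^{n}\mod 2^{n+1}$.

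The second summand is where the whole content sits, and it hinges on the exact valuation of $\Ese{s}{p^n}$. For odd $p$, \Cref{EseProp}.\eqref{ValEse} gives $v_p(\Ese{s}{p^n})=n$, and each $\Ese{t}{ip^n}\equiv ip^n\equiv 0\mod p$, so $p$ divides the sum and hence $p^{n+1}$ divides the product; thus the second summand vanishes and $\Ese{s,t}{p^{n+1}}\equiv 0\mod p^{n+1}$, as claimed. For $p=2$ the sum collapses to $\Ese{t}{2^n}$ (the $i=0$ term is the empty sum), so the second summand is simply $\Ese{s}{2^n}\Ese{t}{2^n}$; here I would split on the residue mod $4$: if $s\equiv 1\mod 4$ then $v_2(\Ese{s}{2^n})=n$ by \eqref{ValEse}, while if $s\equiv -1\mod 4$ then $v_2(\Ese{s}{2^n})=n+v_2(s+1)-1\ge n+1$ by \Cref{EseProp}.\eqref{ValEse-1} (using $v_2(s+1)\ge 2$), and symmetrically for $t$. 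In all cases $v_2(\Ese{s}{2^n}\Ese{t}{2^n})\ge 2n\ge n+1$, so this product is $\equiv 0\mod 2^{n+1}$, leaving $\Ese{s,t}{2^{n+1}}\equiv 2^{n}\mod 2^{n+1}$.

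The only delicate point — and the reason for the case analysis above — is the $p=2$ bookkeeping: the hypothesis $s\equiv 1\mod 2$ does \emph{not} force $s\equiv 1\mod 4$, so one cannot invoke \eqref{ValEse} uniformly and must split on the class of $s$ (and of $t$) modulo $4$. Fortunately both branches yield $v_2\ge n$, which is all that is required, so this obstacle is entirely manageable and the rest reduces to routine valuation arithmetic.
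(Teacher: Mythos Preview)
Your proof is correct and follows essentially the same route as the paper: induction on $n$ via the recurrence obtained by reducing \eqref{eqdobleese} modulo $p^{n+1}$, then showing the cross term $\Ese{s}{p^n}\sum_i\Ese{t}{ip^n}$ vanishes modulo $p^{n+1}$. The only cosmetic difference is that the paper disposes of the cross term uniformly by invoking \Cref{EseProp}.\eqref{EseCero} to get $\Ese{s}{p^n}\equiv\Ese{t}{ip^n}\equiv 0\bmod p^n$, whereas you reach the same valuation bound by hand, splitting on $s,t\bmod 4$ when $p=2$.
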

 
 \begin{proof}
We argue by induction on $n$ with the case $n=1$ being obvious. Suppose that the statement holds for $n$. Observe that $s^{p^n} \equiv t^{p^n}
\equiv 1 \bmod p^{n+1}$.
Moreover, by Lemma~\ref{EseProp}.\eqref{EseCero}, $\Ese{s}{p^n}\equiv
\Ese{t}{ip^n }\equiv 0 \bmod p^n$. Hence, by \eqref{eqdobleese}, $\Te{s,t}{p^{n+1}}\equiv p \Te{s,t}{p^n} \bmod p^{n+1}$.
 	By induction hypothesis if $p\ne 2$ then $\Te{s,t}{p^n}$ is multiple of $p^n$ and hence $\Te{s,t}{p^{n+1}}\equiv 0 \bmod p^{n+1}$.
 	If $p=2$ then $\Te{s,t}{2^n}= 2^{n-1}+a2^n $ for some integer $a$ and hence  $\Te{s,t}{ { 2}^{n+1}}\equiv 2^n \bmod 2^{n+1}$.
 \end{proof}

 \begin{lemma}\label{ValEse2Rebuscado}
 	Let $m$ be a positive integer, and let  $s_1,s_2$  be  integers such that $s_1\equiv -1\bmod 4$ and $s_2\equiv 1 \bmod 2$.   Denote  $o_1= \max(0,m-v_2(s_1+1))$ and  $o_2=\max(0, m-v_2(s_2-1))$.    If $n$ is a positive integer such that  $ \max(o_1,o_2) \leq n-1$ then
 	$\Te{s_1,s_2}{2^n} \equiv 2^{n-1} \bmod 2^m. $
 \end{lemma}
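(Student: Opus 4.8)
The plan is to reduce the statement to a single clean, $m$-free congruence and then prove that by induction on $n$. Write $a=v_2(s_1+1)\ge 2$ and $b=v_2(s_2-1)\ge 1$ (allowing the value $\infty$), and set $c=\min(a,b)$. Since $o_1=\max(0,m-a)$ and $o_2=\max(0,m-b)$, the hypothesis $\max(o_1,o_2)\le n-1$ is equivalent to $m\le n+c-1$. Hence it suffices to prove the sharper assertion
\[
\Ese{s_1,s_2}{2^n}\equiv 2^{n-1}\mod 2^{n+c-1}\qquad(n\ge 1),
\]
because reducing it modulo $2^m$ for any $m\le n+c-1$ yields the lemma.

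If $c=1$ (equivalently $s_2\equiv 3\mod 4$) this is exactly \Cref{ValEse2} with $p=2$, so I would dispose of that case at once. For $c\ge 2$ (so $a,b\ge 2$, and in particular $s_2\equiv 1\mod 4$) I would induct on $n$. The base case $n=1$ is immediate, since $\Ese{s_1,s_2}{2}=s_2\equiv 1\mod 2^c$. For the inductive step I would use the $p=2$ specialization of the block splitting behind \eqref{eqdobleese}; there the $k=1$ contribution collapses because $\Ese{s_2}{0}=0$, leaving the recursion
\[
\Ese{s_1,s_2}{2^{n+1}}=(1+s_1^{2^n}s_2^{2^n})\,\Ese{s_1,s_2}{2^n}+s_2^{2^n}\,\Ese{s_1}{2^n}\,\Ese{s_2}{2^n}.
\]

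The computational heart is a short valuation bookkeeping via \Cref{EseProp}. From \Cref{EseProp}.\eqref{ValEse-1} (for $s_1\equiv-1\mod 4$) and \Cref{EseProp}.\eqref{ValEse} (for $s_2\equiv 1\mod 4$) I obtain $v_2(s_1^{2^n}-1)=a+n$, $v_2(s_2^{2^n}-1)=b+n$, $v_2(\Ese{s_1}{2^n})=n+a-1$ and $v_2(\Ese{s_2}{2^n})=n$. These give $1+s_1^{2^n}s_2^{2^n}\equiv 2\mod 2^{n+c}$ and $v_2\big(s_2^{2^n}\Ese{s_1}{2^n}\Ese{s_2}{2^n}\big)=2n+a-1\ge n+c$, so the last summand of the recursion vanishes modulo $2^{n+c}$. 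Writing $\Ese{s_1,s_2}{2^n}=2^{n-1}+2^{n+c-1}K$ from the inductive hypothesis and $1+s_1^{2^n}s_2^{2^n}=2+2^{n+c}L$, the product equals $2^n$ plus terms of $2$-valuation at least $n+c$ (here one uses $2n+c-1\ge n+c$ for $n\ge 1$), whence $\Ese{s_1,s_2}{2^{n+1}}\equiv 2^n\mod 2^{n+c}$, closing the induction.

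I expect the main obstacle to be purely organizational rather than conceptual: specializing \eqref{eqdobleese} correctly (in particular noticing that the $k=1$ term dies because $\Ese{s_2}{0}=0$) and keeping the four valuation estimates straight, while separating off the boundary case $c=1$. The genuinely degenerate possibilities $s_1=-1$ or $s_2=1$ (where $a$ or $b$ equals $\infty$) cause no difficulty: the same inequalities hold with the convention that $\infty$ exceeds every integer, and when $c=\infty$ the recursion forces $\Ese{s_1,s_2}{2^n}=2^{n-1}$ exactly, which gives the congruence for all $m$.
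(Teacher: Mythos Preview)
Your proof is correct. Both your argument and the paper's pivot on the same $p=2$ specialization of \eqref{eqdobleese}, namely
\[
\Ese{s_1,s_2}{2^{n+1}}=(1+s_1^{2^n}s_2^{2^n})\,\Ese{s_1,s_2}{2^n}+s_2^{2^n}\,\Ese{s_1}{2^n}\,\Ese{s_2}{2^n},
\]
and on the same valuation estimates from \Cref{EseProp}. The difference is organizational: the paper runs a double induction, first on $m$ and then on $n$, whereas you first strip $m$ out by rewriting the hypothesis as $m\le n+c-1$ with $c=\min\big(v_2(s_1+1),v_2(s_2-1)\big)$ and proving the sharper $m$-free congruence $\Ese{s_1,s_2}{2^n}\equiv 2^{n-1}\bmod 2^{n+c-1}$. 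That reformulation lets a single induction on $n$ do all the work and disposes of the boundary case $c=1$ immediately via \Cref{ValEse2}; in the paper that boundary situation is absorbed into the outer induction on $m$. One small imprecision: when you say ``the $k=1$ contribution collapses'', only the second summand of the $k=1$ block (the one containing $\Ese{s_2}{0}$) vanishes; the first summand $(s_1s_2)^{2^n}\Ese{s_1,s_2}{2^n}$ survives and is exactly what produces the factor $1+s_1^{2^n}s_2^{2^n}$ in your recursion. Your final formula is nonetheless correct.
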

 \begin{proof}
We proceed by double  induction, first on $m$ and then on $n $. As  $s_1\equiv s_2\equiv 1\bmod 2$  we have   $\Te{s_1,s_2}{2^n}\equiv \Te{1,1}{2^n} \equiv 2^{n-1} \bmod 2 $ for every   $n$. Now assume  that  $m\ge 2$
and the induction hypothesis for $m-1$ and proceed  by induction over $n$. If $n=1$ then $o_1=o_2=0$, so $s_1 \equiv -1 \bmod 2^m$ and $s_2 \equiv 1\bmod 2^m$, and  hence  $ \Te{s_1,s_2}{2} \equiv \Te{-1,1}{2} = 1 \bmod 2^m$.   Assume  that $n\ge 2$ and the induction hypothesis for $n-1$. Observe that the hypothesis $\max(o_1,o_2)\le n-1$, combined with \Cref{EseProp}, implies that $s_1^{2^{n-1}}\equiv s_2^{2^{n-1}}\equiv 1 \bmod 2^m$.
This and \eqref{eqdobleese} yield
 	$$\Te{s_1,s_2}{2^{n }} \equiv
 	2\Te{s_1,s_2}{2^{n-1}} + \Ese{s_1}{2^{n-1}}\Ese{s_2}{2^{n-1}} \bmod 2^m.$$
By Lemma~\ref{EseProp},  $v_2(\Ese{s_1}{2^{n-1}}\Ese{s_2}{2^{n-1}} )\geq v_2(\Ese{s_1}{2^{n-1}})+1 = n -1 +v_2(s_1+1) \geq   n-1+m-o_1$, which is greater or equal than $m$ by hypothesis.
 	Hence $\Te{s_1,s_2}{2^{n }}\equiv 2 \Te{s_1,s_2}{2^{n-1}}  \bmod 2^m$.
If $\max(o_1,o_2)<n-1$ then  we can apply the induction hypothesis (on $n$) to deduce that $\Te{s_1,s_2}{2^{n-1}} \equiv 2^{n-2} \bmod 2^m$, and the result follows. Thus we can assume $\max(o_1,o_2)=n-1$.
 	Then write $\tilde m=m-1$, $\tilde o_1=\max(0, \tilde m- v_2(s_1+1))= {\max(0,o_1-1)}$ and $\tilde o_2=\max(0, \tilde{m} -v_2(s_2-1))= 
{\max(0,o_2-1)}$. 
 	As $\max(o_1,o_2)=n-1  \geq  1$, $\max(\tilde o_1,\tilde o_2) =n-2 $, so by the induction hypothesis (on $m$) $\Te{s_1,s_2}{2^{n-1}} \equiv 2^{n-2} \bmod 2^{m-1}$.  Hence $\Te{s_1,s_2}{2^n}\equiv 2\Te{s_1,s_2}{2^{n-1}}\equiv 2^{n-1} \bmod 2^m$.
 \end{proof}

\bibliographystyle{amsalpha}
\bibliography{ReferencesMSC}

\end{document}